\newtheorem{prop}{Proposition}
\newtheorem{lemm}{Lemma}
\newtheorem{coro}{Corollary}
\newtheorem{remark}{Remark}
\title{Generalized  $\mathcal{L}$-product for hight order tensors with applications using GPU computations.}
\author{ A.H Bentbib\thanks{Facult\'e des Sciences et Techniques-Gueliz, Laboratoire de Math\'ematiques Appliqu\'ees et Informatique, Marrakech, Morocco} \and
M. Elalj\footnotemark[4]
	\and A. El Hachimi\footnotemark[4] 
	\and K. Jbilou\footnotemark[4] \thanks{Universit\'e du Littoral Cote d'Opale, LMPA, 50 rue F. Buisson, 62228 Calais-Cedex, France }
	\and {A. Ratnani  \thanks{laboratory MSDA, Mohammed VI Polytechnic University, Green City, Morocco}.}}
\begin{document}
	
	\maketitle

	% REQUIRED
	\begin{abstract}
		In this paper, we will present a generalization of the $\mathcal{L}$-tensor product ($*_\mathcal{L}$-product) including generalization of the well known tensor cosine and T-products that were defined for third-order tensors and based on fast Fourier transform and discrete cosine transform (DCT). We will give some applications on tensor completion. To solve some optimization problems linked with  the problem of tensor completion, we will use the Proximal Gradient Algorithm (PGA) to solve some derived optimization problems. Numerical tests are given to show the effectiveness of the proposed methods and also present some tests using GPU computation.
	\end{abstract}
	
	% REQUIRED
	\begin{keywords}
		Tensor completion, tensor nuclear norm, tensor $*_\mathcal{L}$-product, tensor $*_\mathcal{L}$-svd, GPU.
	\end{keywords}
	
	\section{Introduction}
	In the last decade, tensors become an important multilinear algebra tool involved in many modern problems such completion \cite{elha1,ding2019low,ji2016tensor}, principal component analysis \cite{lu2019tensor}, image processing \cite{elguide,kilmer2011factorization,kolda} and others. \\The classical $n$-mode product leads to many concepts and developements when working with multidimensional data. The CP and the Tucker compressions were introduced as natural generalization of the classical singular value decomposition (SVD) for matrices; see \cite{golubvanloan,kolda,kilmer2011factorization,lu2019tensor,ji2016tensor}.  \\
	In the last years,  new tensor-tensor products such as cosine-product (c-product), using discrete cosine or T-product, using Fast Fourier Transform (FFT),  were introduced for third-order tensors, studied and applied to image processing and other fields; see  \cite{elichi,zhang2014novel,kilmer2,lu2019tensor,elguide}. In the present paper, we generalize those tensor-tensor products for high-order tensors. Using those new products, we will propose new completion models. We give some theoretical results and some numerical examples in color video processing. \\
	The outline of this paper will as follows
	In Section \ref{sec1} we will give some definitions  and remind some known results of the third-order tensor-tensor product based on Fast Fourier Transform and cosine transform. 
	 In Section \ref{sec2}, we present our new generalized  c-product  for any order of tensors and give some important results.
	 Section \ref{sec3} presents a novel models of tensor completion for  tensors of any order by using the PGA. Section \ref{sec5}  will be devoted to some numerical experiments with  someexperiments using GPU. 
%	\end{enumerate}

	\section{Definitions and notations}
		\label{sec1}
		In this subsection we will define some notions that will help us in the paper. We  denote   tensors by Euler script letters, e.g., $\mathcal{X}$, matrices will be denoted by boldface capital letters ,e.g., $X$, vectors by boldface lowercase letters e.g., $\textbf{x}$ and scalars by lowercase letters, e.g., $x$. Also we will denote the $(i_1,\, i_2,\, ...,\, i_K)^{th}$ for a $K^{th}$-order tensor $\mathcal{X}$ by $\mathcal{X}_{i_1,\, i_2,\, ...,\, i_K}$. Also we will denote $\mathbb{C}^{I_1 \times I_2 \times \dots \times I_K}$ by $\mathbb{K}^{I_1 \times I_2}_{I_3 \times \dots \times I_K}$, and the $K^{th}$-order tensor-scalar space as $\mathbb{K}_{I_3 \times \dots \times I_K}=\mathbb{C}^{1\times 1 \times I_3 \times \dots \times I_K}$.\\
		Let $\mathcal{A}$ and $\mathcal{B}$ be  two $K^{th}$-order tensors in $\mathbb{K}^{I_1 \times I_2}_{I_3\times \dots \times I_k}$, we will define the inner product between $\mathcal{A}$ and $\mathcal{B}$ by 
		\begin{equation}
		\left<\mathcal{A},\, \mathcal{B}\right>=\sum_{i_1,\, i_2,\, \dots ,\, i_K=1}^{I_1,\, I_2,\, \dots ,\, I_K}\mathcal{A}_{i_1,\, i_2,\, \dots,\, i_K}\mathcal{B}_{i_1,\, i_2,\, \dots,\, i_K}.
		\end{equation}
		The associated norm  is defined by 
		\begin{equation}
		\left\Vert \mathcal{A}\right\Vert_F^2=\sum_{i_1,\, i_2,\, \dots ,\, i_K=1}^{I_1,\, I_2,\, \dots ,\, I_K}\mathcal{A}_{i_1,\, i_2,\, \dots,\, i_K}^2.
		\end{equation}
		The notion of columns and rows of matrices are extended to the case of tensors, where in this case we talk about $n$-mode fiber instead of columns and rows, with the $n$-mode fiber of a $K^{th}$-order tensor $\mathcal{X}$ is defined by fixing all the indexes except the $n^{th}$ one.\\
		There is some ways to transform a tensor to a matrix which consist to make the operations on tensors easier, there is for example the $n$-mode matricization \cite{kolda,ji2016tensor} defined as follows
		\begin{definition}
			Let $\mathcal{X}\in \mathbb{K}^{I_1 \times I_2}_{I_3 \times \dots \times I_N}$, then the $n$-mode matricization of $\mathcal{X}$ denoted by $X_{(n)}\in \mathbb{K}^{I_n \times \prod_{k=1,\, k\neq n}^{N}I_k}$ and it is defined by making the $n$-mode fibers as columns of $X_{(n)}$, i.e., the $(i_1,\, i_2,\, \dots  , \, i_N)^{th}$ element of $\mathcal{X}$ maps to a matrix element $(i_n,\, j)$ satisfying 
			$$j=1 + \sum_{k=1,\, j\neq n}^{N}(i_k-1)J_k\,\, for \,\, J_k=\prod_{m=1,\, m\neq n}^{N}I_m.$$
		\end{definition}
		For third-order tensors $\mathcal{X}\in \mathbb{K}^{I_1 \times I_2}_{I_3}$, $X_{(1)}$, $X_{(2)}$ and $X_{(3)}$ are given by 
		\begin{eqnarray*}
		X_{(1)} &=& \left[\mathcal{X}^{(1)},\, \mathcal{X}^{(2)},\, \dots , \, \mathcal{X}^{(I_3)}  \right],\\
		X_{(2)}&=&\left[\left(\mathcal{X}^{(1)}\right)^T,\, \left(\mathcal{X}^{(2)}\right)^T,\, \dots , \, \left(\mathcal{X}^{(I_3)}\right)^T  \right],\\
		X_{(3)} &=&\left[{\tt sq}\left(\mathcal{X}(:,\,1,\, :)\right)^T,\, {\tt sq}\left(\mathcal{X}(:,\,2,\, :)\right)^T,\, \dots , \, {\tt sq}\left(\mathcal{X}(:,\,I_2,\, :)\right)^T   \right],
		\end{eqnarray*}
		where $\mathcal{X}^{(i)}$ denotes the $i^{th}$ frontal slice $\left(\mathcal{X}^{(i)}=\mathcal{X}(:,:,i)\right)$ and ${\tt sq}$ transforms the tensor $\mathcal{X}\in \mathbb{K}^{I_1 \times 1}_{I_3}$ to a matrix $X\in \mathbb{K}^{I_1 \times I_3}$, i.e., $X={\tt sq}\left(\mathcal{X}\right)$.\\
		The $n$-mode product, which is a product between a tensor and a matrix in the $n$-mode \cite{kolda} is defined in  the following definition
		\begin{definition}
			Let $\mathcal{X}\in \mathbb{K}^{I_1 \times I_2}_{I_3 \times \dots \times I_K}$ and $U \in \mathbb{K}^{J\times I_n}$ where  $J$ is a positive nonzero integer. Then the $n$-mode product $\mathcal{X}\times_n U$ is the tensor  in $\mathbb{K}^{I_1 \times I_2}_{I_3 \times \dots \times I_{n-1} \times J \times I_{n+1} \times \dots \times I_K}$, where its $\left(i_1,\dots, i_{n-1}, j, i_{n+1}, \dots , i_K\right)^{th}$  element is defined by
			\begin{equation}
			\left(\mathcal{X}\times_n U\right)_{i_1,\, \dots,\, i_{n-1},\, j ,\, i_{n+1},\, \dots, \, i_K}=\sum_{i_n=1}^{I_n}\mathcal{X}_{i_1,\, i_2,\, \dots ,\, i_K}U_{j,\,i_n}.
			\end{equation}
		\end{definition}
	Some useful properties of the $n$-mode product are given as follows.  Let the tensor  $\mathcal{X}$, and the matrices $U$ and $V$ of appropriate sizes, then  
	%	\begin{itemize}
	\begin{eqnarray*}
		\mathcal{Y}=\mathcal{X}\times_n U & \Longleftrightarrow & Y_{(n)}=UX_{(n)}, \; and \\
		 \mathcal{X}\times_n U \times_m V & = & \mathcal{X}\times_m V \times_n U.
		\end{eqnarray*}
%		In all the works that used tensor-tensor product using a transform domain, the notion of tensor face-wise product has been used to redefine those tensor-tensor product in another simple way.
We will also use the notion of tensor face-wise product  defined next.
		\begin{definition}
			Let $\mathcal{X}\in \mathbb{K}^{I_1 \times n}_{I_3}$ and $\mathcal{Y}\in \mathbb{K}^{n \times I_2}_{I_3}$ two third-order tensors, then the face-wise product between $\mathcal{X}$ and $\mathcal{Y}$ is given by the tensor of size $I_1 \times I_2 \times I_3$ where its $i^{th}$ frontal slice is given from the product between the $i^{th}$ frontal slices of $\mathcal{X}$ and $\mathcal{Y}$, i.e.,
			\begin{equation}
			\left(\mathcal{X}\triangle \mathcal{Y}\right)^{(i)}=\mathcal{X}^{(i)}\triangle \mathcal{Y}^{(i)}.
			\end{equation} 
		\end{definition}
	%	Also there is the Kronecker product that will play an important role in our work
	%	\begin{definition}{Kronecker product}\\
%	The Kronecker product  $A\otimes B$  of the matrices $A\in \mathbb{K}^{I\times J}$ and $B\in \mathbb{K}^{K \times L}$ is given by $A\otimes B=[a_{ij}B]$.
%			let $A\in \mathbb{K}^{I\times J}$ and $B\in \mathbb{K}^{K \times L}$. The Kronecker product  $A\otimes B$ gives a matrix of size $IK\times JL$, which is a block matrix each of their blocks is of size $K\times L$, and the $(i,j)^{th}$ block is defined as $a_{i,\,j}B$, i.e.,
%			\begin{equation}
%			A\otimes B=[a_{ij}B]
%			\begin{pmatrix}
%			a_{1,\,1}B & a_{1,\,2}B & \dots & \dots & a_{1,\,J}B\\
%			a_{2,\,1}B & a_{2,\,2}B & \dots & \dots & a_{2,\,J}B\\
%			a_{3,\,1}B & \dots & \dots & \dots & \vdots\\
%			\vdots & \dots & \dots & \dots & \vdots\\
%			a_{I,\,1}B & a_{I,\,2}B & \dots & \dots & a_{I,\,J}B
%			\end{pmatrix}
%			\end{equation}
%		\end{definition}
	%	\subsection{Reminder}
	Classical tensor decompositions such as  CP decomposition \cite{kolda}, Tucker decomposition \cite{ji2016tensor}, block term decomposition \cite{block-terme decomposition} give nice  results in many tensor applications. However, those  decompositions suffer from the high computational cost for large problems. In the recent years new  tensor decompositions of the third-order case and  based on tensor-tensor product using the Fourier domain  such as the t-product \cite{kilmer2011factorization} and cosine-product (c-product) \cite{kernfeld2015tensor}, have been defined and used for many image processing applications; see   \cite{zhang2014novel,kilmer2021tensor,elha1,lu2019tensor}.\\ In this section we will try to remind the most important results of those types of tensor-tensor product.\\
		The main idea of this type of tensor products is to transform the tensors to another domain which is called the transform domain, like Fourier domain, cosine domain. Then  all the operations are done in the transformed domain using for examlple FFT on each tube to speed-up the executing time. This kind of transformation could be defined in the following.
		\begin{definition}
			\label{def 5}
			Let  $M$ be an invertible matrix of size $I_3 \times I_3$, we define the operator $	\mathcal{L}$  as 
			\begin{eqnarray*}
				\mathcal{L}:\; \mathbb{K}^{I_1 \times I_2}_{I_3}&  \longrightarrow &\mathbb{K}^{I_1 \times I_2}_{I_3} \\
				\mathcal{A}  &\longrightarrow &\mathcal{A}\times_3 M
			\end{eqnarray*}
		 and its inverse is defined as
			\begin{eqnarray*}
				\mathcal{L}^{-1}: \; \mathbb{K}^{I_1 \times I_2}_{I_3} & \longrightarrow & \mathbb{K}^{I_1 \times I_2}_{I_3} \\
			\mathcal{A} &  \longrightarrow & \mathcal{A}\times_3 M^{-1}
			\end{eqnarray*}
		\end{definition}
		Now we can define the tensor-tensor product of two third-order tensors.
		\begin{definition}
			Let $\mathcal{L}$ be an invertible operator, then the tensor-tensor product between two third-order tensors associated with  the operator $\mathcal{L}$, is denoted by $*_{\mathcal{L}}$ and is given by 
			\begin{equation}\label{eq 2.5}
			\mathcal{A}*_{\mathcal{L}}\mathcal{B}=\mathcal{L}^{-1}\left(\mathcal{L}\left(\mathcal{A}\right)\triangle \mathcal{L}\left(\mathcal{B}\right)\right).
			\end{equation}
	where the tensors $\mathcal{A}$ in $\mathbb{K}^{I_1 \times l}_{I_3}$ and   $\mathcal{B}$ in $\mathbb{K}^{l \times I_2}_{I_3}$.
		\end{definition}
	
	\noindent 	The matrix $M$ in Definition  \ref{def 5} depends on the type of the product, for example if we use the t-product \cite{kilmer2011factorization}, the matrix $M$ is the matrix of discrete Fourier transform $F_{I_3}$ where the Fourier matrix $F_n  \in \mathbb{C}^{n\times n}$ is given by
		\begin{equation}
		F_n=[\omega_n^{(i-1)(j-1)}];\, i,j=1,\ldots,n-1; \; and \; \omega_n=e^{\frac{-2i \pi}{n}}.
				\end{equation}
%		
%		\begin{pmatrix}
%		1 & 1 & 1 & \dots & 1 \\
%		1 & \omega & \omega^2 & \dots & \omega^{n-1} \\
%		1 & \omega^2 & \dots & \dots & \omega^{2(n-1)}\\
%		\vdots & \vdots & \ddots & \vdots \\
%		1 & \omega^{n-1} & \omega^{2(n-1)} & \dots & \omega^{(n-1)(n-1)}
%		\end{pmatrix}\in \mathbb{C}^{n\times n},
		for $n\in \mathbb{N}^*$. Notice that $\dfrac{F_n}{\sqrt{n}}$ is unitary, i.e., $F_n F_n^*=nI_n$.\\
		In the case of c-product \cite{kernfeld2015tensor}, the matrix $M$ is defined as 
		\begin{equation}\label{m}
		M=W^{-1}_{I_3}C_{I_3}\left(I_{I_3}+Z_{I_3}\right),
		\end{equation}
		where $W_{I_3}={\tt diag}\left(C_{I_3}(:,1)\right)$, the matrix $Z_{I_3}$ is the circulant upshift matrix defined by  $Z_{I_3}={\tt diag(ones(I_3-1,1),1)}$ and $C_{I_3}$ is the matrix of discrete cosine transform of size $I_3 \times I_3$ and its $(i,j)^{th}$ element is defined as 
		\begin{equation}
		\left(C_{I_3}\right)_{i,\,j}=\sqrt{\dfrac{2-\delta_{i,\,j}}{I_3}} \cos\left(\dfrac{(i-1)(2j-1)\pi}{2I_3}\right) \;\; with \;\; 1\leq i,\, j\leq I_3, 
		\end{equation}
		where $\delta_{i,\,j}$ is the Kronecker symbol. Notice that the matrix $C_{n}$ is orthogonal for all $n\in \mathbb{N}^*$. We have also to mention that, in this case also, the matrix $M$  is invertible and $M^{-1}_{I_3}=\left(I_{I_3}+Z_{I_3}\right)^{-1}C^*_{I_3}W_{I_3}.$
		Using those  tensor-tensor products, all the classical matrix decomposition,  such as svd, QR and Shur decompositions have been generalized to the tensor case; see \cite{kernfeld2015tensor,kilmer2013third}.
		Many applications of tensor-tensor product use some optimization algorithms  and in our present work, we will use the   Proximal Gradient Algorithm  \cite{elha2} in tensor completion.  The method consists in solving the optimization problem 
		\begin{eqnarray}
		&\underset{\mathcal{X}\in \mathbb{H}}{\min}&\, g(\mathcal{X}) \nonumber \\
		&{\tt s.t}& \textbf{A}\left(\mathcal{X}\right)=\mathcal{B},
		\end{eqnarray} 
		where $\mathbb{H}$ is an Hilbert space equipped with a norm $\left\Vert . \right\Vert$, $g$ is a continuous function, $\textbf{A}$ is a linear map and $\mathcal{B}$ is an observation.\\
		By referring to \cite{beck2009fast,lin2009fast}, this optimization problem can be solved by solving the following one
		\begin{equation}\label{eq11}
		\underset{\mathcal{X}\in \mathbb{H}}{\min}\, \{F(\mathcal{X})=\mu g(\mathcal{X})+f(\mathcal{X})\},
		\end{equation}
		where $f(\mathcal{X})=\dfrac{1}{2}\left\Vert \mathcal{A}(\mathcal{X})-\mathcal{B} \right\Vert^2$ and $\mu >0$ is the relaxation parameter. The penality function $f$ is convex and smooth with Lipshitz continuous gradient, with Lipshitz constant $l_f$. 
		%which means that this optimization problem is one of the proximal gradient algorithms. \\
	To solve  \eqref{eq11}, we minimize  the  quadratic function   $Q(\mathcal{X},\mathcal{Y})$, where $\mathcal{Y}$ is chosen  and $Q$ is defined as follows
		\begin{equation}\label{eq 12}
		Q\left(\mathcal{X},\mathcal{Y}\right)=\mu g(\mathcal{X})+f(\mathcal{Y})+ \left<\nabla f(\mathcal{Y}),\, \mathcal{X}-\mathcal{Y}\right>+\dfrac{l_f}{2}\left\Vert \mathcal{X}-\mathcal{Y}\right\Vert^2.
		\end{equation}
Getting a solution  $\mathcal{X}$  satisfying \eqref{eq 12} is equivalent to solve  the following minimization problem 
		\begin{equation} \label{eq 13}
		\underset{\mathcal{X}\in \mathbb{H}}{\min}\, Q(\mathcal{X},\mathcal{Y})=\underset{\mathcal{X}\in \mathbb{H}}{\min}\, \mu g(\mathcal{X})+\dfrac{l_f}{2}\left\Vert \mathcal{X}-\mathcal{G}\right\Vert^2,
		\end{equation}
		where $\mathcal{G}=\mathcal{Y}-\dfrac{1}{l_f}\nabla f(\mathcal{Y})$.  
		The problem is solved  iteratively by computing $\mathcal{X}^{p+1}$ such that 
		\begin{equation}
		\mathcal{X}^{p+1}=\underset{\mathcal{X}\in \mathbb{H}}{\arg\, \min}\, Q(\mathcal{X},\, \mathcal{Y}^p),
		\end{equation}
	In  \cite{lin2009fast},  $\mathcal{Y}^p$ was computed by  $\mathcal{Y}^p=\mathcal{X}^p+\dfrac{t_{p+1}-1}{t_p}\left(\mathcal{X}^p-\mathcal{X}^{p-1}\right)$ instead of $\mathcal{Y}^p=\mathcal{X}^p$ for computationally reasons  
	 %where  the sequence $\left(t_p\right)_{p\geq 0}$ satisfies $t_{p+1}^2-t_{p+1} \leq t_p^{2}$, also in the same paper that i cited they chose 
	 and $t_{p+1}=\dfrac{1+\sqrt{4t_p^2+1}}{2}$. The steeps of this algorithm can be summarized in the following algorithm 
		\begin{algorithm}[H]
			\caption{Proximal Gradient Algorithm (PGA).}
			\label{alg:1}
			\begin{algorithmic}[1]
				\WHILE {not converged}
				\STATE $\mathcal{Y}^p=\mathcal{X}^p+\dfrac{t_{p-1}-1}{t_p}\left(\mathcal{X}^p-\mathcal{X}^{p-1}\right)$.
				\STATE $\mathcal{G}^p=\mathcal{Y}^p-\dfrac{1}{l_f}\nabla f\left(\mathcal{Y}^p\right)$.
				\STATE $\mathcal{X}^{p+1}=\underset{\mathcal{X}\in \mathbb{H}}{\arg\, \min}\, \mu g(\mathcal{X})+\dfrac{l_f}{2}\left\Vert \mathcal{X}-\mathcal{G}^p\right\Vert^2$.
				\STATE $t_{p+1}=\dfrac{1+\sqrt{4t_p^2+1}}{2}$.
				\STATE $p=p+1.$
				\ENDWHILE
			\end{algorithmic}
		\end{algorithm}
	
	\begin{section}{Generalized tensor-tensor  cosine product}
		\label{sec2}
		The main inconvenient  of tensor-tensor products above is the fact that they  could be used only for third-order tensors. In \cite{martin2013order}, the authors proposed a generalization of the t-product and  in the present work we propse   the generalization of the c-product with some applications.	We will first recall some important results linked with the c-product for third-order tensors described in \cite{kernfeld2015tensor}. For two third-order tensors $\mathcal{A}\in \mathbb{K}^{I_1 \times l}_{I_3}$ and $\mathcal{B}\in \mathbb{K}^{l \times I_2}_{I_3}$, the c-product $	\mathcal{A}*_c \mathcal{B}$  is defined by
		\begin{equation}
			\mathcal{A}*_c \mathcal{B}= {\tt ten}\left({\tt btph}\left(\mathcal{A}\right){\tt btph}\left(\mathcal{B}\right)\right) \in \mathbb{K}^{I_1 \times I_2}_{I_3},
		\end{equation}
		where ${\tt btph}$ represents the block-Toeplitz-plus-Hankel matrix   defined as
		\begin{equation}\label{eq 3.22}
			{\tt btph}\left(\mathcal{A}\right)=\begin{pmatrix}
			\mathcal{A}^{(1)} &  \dots  &  \dots   &   \mathcal{A}^{(I_3)}\\
			\mathcal{A}^{(2)} &  \mathcal{A}^{(1)}  &  \dots   &   \mathcal{A}^{(I_3-1)}\\
			 \vdots & \ddots   &   \ddots   &  \vdots \\
			\mathcal{A}^{(I_3)} & \dots   &  \dots   &   \mathcal{A}^{(1)}
			\end{pmatrix} + 
			\begin{pmatrix}
			\mathcal{A}^{(2)} &  \dots  &   \mathcal{A}^{(I_3)} & \textbf{0}\\
			\vdots &\reflectbox{$\ddots$}   &  \reflectbox{$\ddots$}  &   \mathcal{A}^{(I_3)}\\
			\mathcal{A}^{(I_3)} & \textbf{0}   &  \reflectbox{$\ddots$}  &  \vdots \\
		\textbf{0} & \mathcal{A}^{(I_3)}    &  \dots   &   \mathcal{A}^{(2)}
			\end{pmatrix} \in \mathbb{K}^{I_1 I_3 \times  I_2 l}
				\end{equation}
				and the operator ${\tt ten}$ is the operator to get back a tensor from its ${\tt btph}$, where ${\tt ten}\left({\tt btph}\left(\mathcal{A}\right)\right)=\mathcal{A}.$ Computing this product by the above formula can be expensive, because the matrix ${\tt btph}$  may  be very large. Some properties of ${\tt btph}$ are given in \cite{kernfeld2015tensor}. Among them, the fact that for a tensor $\mathcal{A}\in \mathbb{K}^{I_1 \times I_2}_{I_3}$,  the matrix ${\tt btph}$  is block diagonalizable by  $\left(C_{I_3} \otimes I_{I_1}\right)$ and  we have 
				\begin{equation}\label{eq 3.33}
					\left(C_{I_3} \otimes I_{I_1}\right){\tt btph}\left(\mathcal{A}\right)\left(C_{I_3}^* \otimes I_{I_2}\right)={\tt bdiag}\left(\hat{\mathcal{A}}\right),
				\end{equation}
				where $\hat{\mathcal{A}}=\mathcal{A}\times_3 M$ with  $M$ is defined by \eqref{m}  and 
				\begin{equation}
					{\tt bdiag}\left(\hat{\mathcal{A}}\right)=\begin{pmatrix}
					\hat{\mathcal{A}}^{(1)} &  &   &   \\
					& \hat{\mathcal{A}}^{(2)} &   &  \\
					&   & \ddots  &    \\
					&  &   & \hat{\mathcal{A}}^{(I_3)}
					\end{pmatrix}\in \mathbb{K}^{I_1 I_3 \times I_2 I_3}. 
				\end{equation}
				From this last result,   we can define the c-product between two third-order tensors $\mathcal{A}$ and $\mathcal{B}$ of appropriate sizes with $I_3$ frontal slices. 
				Before giving the generalized version of the high-order c-product, we give some definitions and notations.
				First, we will call a tensor in the scalar space ($\mathbb{K}_{I_3 \times \dots \times I_N}$) a scalar-tensor, which will replace the notion of tubes in the case of third-order tensors.
		For an $N^{th}$-order tensor $\mathcal{X}\in \mathbb{K}^{I_1 \times I_2}_{I_3 \times \dots \times I_N}$,  we will define the  operator $\tt Vec$ that transforms the tensor into a matrix as follows 
			\begin{equation}\label{eq 3.5}
			{\tt Vec(\mathcal{X})}=[X^1,\, X^2,\, ...,\, X^{n_3}, \, X^{n_3+1},\, ...,\, X^{P}]
			\end{equation} 
			where $P=I_3I_4\dots I_N$ and $X^p=\mathcal{X}(:,:,k_3,k_4, \dots , k_N)\in \mathbb{K}^{I_1 \times I_2}$ with $p=k_3+\displaystyle \sum_{i=4}^{N} \left (\dfrac{\left(k_i-1\right) P}{\prod_{s=i}^{N}I_i} \right ).$ This generalizes  the notion of frontal slices in the case of third-order tensors. Notice that in the case of scalar-tensors $\mathcal{X}\in \mathbb{K}^{1 \times 1}_{I_3 \times \dots \times I_N}$, the  $X^p$'s are  scalars and $	{\tt Vec(\mathcal{X})}$ is a row vector.\\
 In \cite{kernfeld2015tensor}, the  authors  defined the block-Toeplitz-plus-Hankel matrix for third-order tensor and  here we will define the block-Toelitz-plus-Hankel matrix for an $N^{th}$-order tensor for $N\geq3$ by using the block Toeplitz plus Hankel for an $(N-1)^{th}$-order tensor. To explain this we will present this procedure  only  for a fourth-order tensor $\mathcal{X}\in \mathbb{K}^{I_1 \times I_2}_{I_3 \times I_4}$. We define the matrix ${\tt btph} \in \mathbb{R}^{n_1P \times n_2P}$ as the matrix block-Toeplitz-plus-Hankel where each block of the matrices Toeplitz and Hankel is a block-Toeplitz-plus-Hankel matrix of third-order tensors, respectively. Then the block-Toeplitz-plus-Hankel matrix of $\mathcal{X}$ is defined as follows
			\begin{eqnarray*}\label{eq16}
			{\tt btph}(\mathcal{X})&=&\begin{pmatrix}
			{\tt btph}\left(\mathcal{X}(:,:,:,1) \right) & {\tt btph}\left(\mathcal{X}(:,:,:,2) \right) & \dots & \dots & {\tt btph}\left(\mathcal{X}(:,:,:,I_4) \right) \\
			{\tt btph}\left(\mathcal{X}(:,:,:,2) \right) & {\tt btph}\left(\mathcal{X}(:,:,:,1) \right) & \dots  & \dots  & {\tt btph}\left(\mathcal{X}(:,:,:,I_4-1) \right) \\
			\vdots & \vdots & \vdots & \vdots  & \vdots\\
			{\tt btph}\left(\mathcal{X}(:,:,:,I_4) \right) & {\tt btph}\left(\mathcal{X}(:,:,:,I_4 -1) \right) & \dots  & \dots & {\tt btph}\left(\mathcal{X}(:,:,:,1) \right)
			\end{pmatrix} \nonumber \\
		  &&\\
			&+&
			\begin{pmatrix}
			{\tt btph}\left(\mathcal{X}(:,:,:,2) \right) & \dots & {\tt btph}\left(\mathcal{X}(:,:,:,I_4) \right)  & \textbf{0} \\
			\vdots &\reflectbox{$\vdots$}   &  \reflectbox{$\vdots$}  &   {\tt btph}\left(\mathcal{X}(:,:,:,I_4) \right)\\
			{\tt btph}\left(\mathcal{X}(:,:,:,I_4)\right) & \textbf{0}& \reflectbox{$\vdots$} & \vdots \\
			\textbf{0} & {\tt btph}\left(\mathcal{X}(:,:,:,I_4)\right) & \dots & {\tt btph}\left(\mathcal{X}(:,:,:,2)\right)
			\end{pmatrix}\hspace{-2cm}.
			\end{eqnarray*}		
 ${\tt ten}({\tt btph}(\mathcal{X}))=\mathcal{X}$, this operator allows to reconstruct the original tensor from its associate ${\tt btph}$ matrix.
The  block diagonal matrix of the tensor $\mathcal{X}$ is given by 
			\begin{equation}\label{bd}
			{\tt bdiag}(\mathcal{X})=\begin{pmatrix}
			X^1 & \textbf{0} & \dots & \dots & \textbf{0} \\
			\textbf{0} & X^2 & \textbf{0} & \dots & \textbf{0} \\
			\vdots & \ddots & \ddots & \ddots & \vdots \\
			\vdots & \ddots & \ddots & \ddots & \vdots \\
			\textbf{0} & \dots & \dots & \dots & X^P
			\end{pmatrix}\in \mathbb{R}^{n_1P \times n_2P},
			\end{equation}
	%	\end{itemize} 
where the   $X^i$ 's matrices of size $I_1 \times I_2$ are given  in \eqref{eq 3.5}   as  the representative matrices of $\mathcal{X}$. Notice that in the case of scalar-tensors, the blocks $X^i$  are scalars in $\mathbb{C}$  and  in this case, the block diagonal matrix \eqref{bd} is just a diagonal matrix and  ${\tt bdiag}(\mathcal{X})={\tt diag}(\mathcal{X})$.

		\begin{subsection}{Generalized cosine product c-product}
	In this subsection, we will introduce a generalized version of 	 the c-product for high-order tensors. To this end, we first need some theoretical results. First, remind that in the third-order case, the Toeplitz-plus-Hankel matrix  of a  tube $\textbf{a}\in \mathbb{K}_{I_3}$ is diagonalizable using the DCT matrix  of order $I_3$, i.e.,
			\begin{equation}\label{d}
			C_{I_3}{\tt tph}(\textbf{a})C_{I_3}^*={\tt diag}(d); \; where \; d=W_{I_3}^{-1}C_{I_3}\left(I+Z_{I_3}\right){\tt vec}(\textbf{a}),
			\end{equation}
			with  $W_{I_3}$ and $Z_{I_3}$ as defined earlier and ${\tt vec}(\textbf{a})$ is the vector of size $I_3$ whose elements are the coefficients of the tube $\textbf{a}$ . This result is extended to the high-order case by considering scalar-tensors instead of tubes.

			\begin{theorem}\label{theo 8}
				Let $\mathcal{X}\in \mathbb{K}_{I_3\times \dots \times I_N}$,  a scalar-tensor, then its block-Toeplitz-plus-Hankel matrix is diagonalizabl,  and we have
				\begin{equation} \label{eq 19}
				\left(C_{I_N}\otimes C_{I_{N-1}} \otimes \dots \otimes C_{I_3}  \right) {\tt btph}\left(\mathcal{X}\right)\left(C_{I_N}^*\otimes C_{I_{N-1}}^* \otimes \dots \otimes C_{I_3}^*  \right)={\tt diag}\left(\hat{\mathcal{X}}\right)
				\end{equation}
			where $\hat{\mathcal{X}}=\mathcal{X}\times_3 M_{I_3}\times_4 M_{I_4} \times_5 \dots \times_N M_{I_N}$ and $C_{I_N}$ are DCT matrices.
			\end{theorem}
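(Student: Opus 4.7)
I would prove the identity by induction on the order $N$, exploiting the recursive block structure of ${\tt btph}$ and reducing everything to the scalar diagonalization \eqref{d} of the classical Toeplitz-plus-Hankel matrix.

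The base case $N = 3$ is exactly \eqref{d}: for a tube $\mathcal{X} \in \mathbb{K}_{I_3}$, $C_{I_3}\,{\tt tph}(\mathcal{X})\,C_{I_3}^* = {\tt diag}(M_{I_3}\,{\tt vec}(\mathcal{X})) = {\tt diag}(\mathcal{X} \times_3 M_{I_3})$. The key intermediate step for the induction is a \emph{block} version of \eqref{d}: if $R$ is a block-Toeplitz-plus-Hankel matrix built from $n$ generating blocks $B_1,\dots,B_n \in \mathbb{K}^{s \times s}$, then writing $R = \sum_{k=1}^{n} E_k \otimes B_k$, where the scalar indicator matrices $E_k$ are characterized by $\sum_k a_k E_k = {\tt tph}(a_1,\dots,a_n)$, one gets
\[
(C_n \otimes I_s)\,R\,(C_n^* \otimes I_s) \;=\; \sum_{k=1}^{n} (C_n E_k C_n^*) \otimes B_k .
\]
Since \eqref{d} forces each $C_n E_k C_n^*$ to be diagonal with diagonal equal to the $k$-th column of $M_n$, the right-hand side is block-diagonal and its $i$-th block equals $\sum_k (M_n)_{ik}\, B_k$.

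For the inductive step, let $\mathcal{X} \in \mathbb{K}_{I_3 \times \dots \times I_N}$. By the recursive definition of ${\tt btph}$, the matrix ${\tt btph}(\mathcal{X})$ is itself a block-Toeplitz-plus-Hankel matrix whose generating blocks along the outermost direction are ${\tt btph}(\mathcal{X}(:,:,\dots,:,k))$ for $k = 1,\dots,I_N$, each of size $P_{N-1} \times P_{N-1}$ with $P_{N-1} = I_3 I_4 \dots I_{N-1}$. Applying the block lemma with $n = I_N$ and $s = P_{N-1}$, conjugation by $C_{I_N} \otimes I_{P_{N-1}}$ produces a block-diagonal matrix whose $i$-th block equals $\sum_k (M_{I_N})_{ik}\,{\tt btph}(\mathcal{X}(:,:,\dots,:,k))$. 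By linearity of ${\tt btph}$ in its outermost slices and the definition of the $N$-mode product, this block coincides with ${\tt btph}\bigl((\mathcal{X} \times_N M_{I_N})(:,:,\dots,:,i)\bigr)$, which is the ${\tt btph}$ of an $(N-1)$-th order scalar-tensor. I then apply the inductive hypothesis to each diagonal block; combining the outer conjugation $C_{I_N} \otimes I_{P_{N-1}}$ with the inner one $I_{I_N} \otimes (C_{I_{N-1}} \otimes \dots \otimes C_{I_3})$ and using $(A \otimes B)(C \otimes D) = AC \otimes BD$ gives the overall conjugation $C_{I_N} \otimes C_{I_{N-1}} \otimes \dots \otimes C_{I_3}$, while the resulting diagonal is ${\tt diag}(\widehat{\mathcal{X} \times_N M_{I_N}})$. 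Since $n$-mode products along distinct modes commute, this equals ${\tt diag}(\hat{\mathcal{X}})$.

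The main obstacle I expect is the block-lemma itself: one has to identify the correct scalar indicator pattern $\{E_k\}$ in the recursive definition of ${\tt btph}$ so that the outermost-direction structure genuinely matches a standard ${\tt tph}$ whose entries happen to be matrices, and one has to verify the linearity of ${\tt btph}$ in those outermost slices. Matching the Toeplitz/Hankel indexing of the fourth-order template to the general $N$-th order recursion is the delicate bookkeeping point; once that is in place the induction reduces to routine Kronecker-product manipulation and the commutativity of $n$-mode products across distinct modes.
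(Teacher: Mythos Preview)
Your proposal is correct and follows the same inductive strategy as the paper, exploiting the recursive block structure of ${\tt btph}$ together with the scalar identity \eqref{d} and standard Kronecker-product algebra. The one notable organizational difference is the direction in which modes are peeled off: the paper first conjugates by $I_{I_4}\otimes C_{I_3}$ to diagonalize the \emph{innermost} (mode-$3$) blocks, obtaining ${\tt btph}(\mathcal{X}\times_3 M_{I_3})$, and only afterwards applies $C_{I_4}\otimes I_{I_3}$ to handle the outer direction; you do the reverse, stripping the \emph{outermost} mode $N$ first via your block lemma and then invoking the inductive hypothesis on the resulting $(N-1)$-th order ${\tt btph}$ blocks. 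Both orders are valid because the two conjugations commute, and they lead to the same $\hat{\mathcal{X}}$ by commutativity of $n$-mode products along distinct modes. Your version has the advantage of being phrased as a clean formal induction with an explicit block lemma; the paper's version works the fourth-order case out concretely and then appeals to the recursive definition of ${\tt btph}$ for the general statement.
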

		\medskip
		
			\begin{proof}
			For simplicity we  consider only the case of  fourth-order scalar-tensors  $\mathcal{X}$ in $\mathbb{K}^{1\times 1}_{I_3 \times I_4}$. Using the fact that the matrix block-Toeplitz-plus-Hankel for a third-order tensor is block-diagonalizable using the discrete cosine matrix, we get 
				\begin{eqnarray}\label{eq 3.99}
					\left(I_{I_4}\otimes C_{I_3}\right){\tt btph}\left(\mathcal{X}\right)\left(I_{I_4}\otimes C_{I_3}^*\right)={\tt btph}\left(\mathcal{X}'\right),
				\end{eqnarray}
				with $\mathcal{X}'$ is the tensor such that  $\mathcal{X}'(:,:,:,k_4)=C_{I_3}{\tt btph}(\mathcal{X}(:,:,:,k_4))C_{I_3}^*$ for $k_4=1, \dots,\, I_4$. Hence  $\mathcal{X}'=\mathcal{X}\times_3 M_{I_3}$  and 
				\begin{eqnarray*}
			{\tt btph}\left(\mathcal{X}'\right)&=&\begin{pmatrix}
				diag(\mathcal{X}'_{1}) &  \dots  &  \dots   &  diag(\mathcal{X}'^{I_4})\\
				diag(\mathcal{X}'_{2}) &  diag(\mathcal{X}'_{1}) &  \dots   &   diag(\mathcal{X}'_{I_4-1})\\
				\vdots & \ddots   &   \ddots   &  \vdots \\
				diag(\mathcal{X}'_{I_4}) & \dots   &  \dots   &  diag(\mathcal{X}'_{1})
				\end{pmatrix}\\
				&+& 
				\begin{pmatrix}
				diag(\mathcal{X}'_{2})&  \dots  &   diag(\mathcal{X}'_{I_4}) & \textbf{0}\\
				\vdots &\reflectbox{$\ddots$}   &  \reflectbox{$\ddots$}  &   diag(\mathcal{X}'_{I_4})\\
				diag(\mathcal{X}'_{I_4}) & \textbf{0}   &  \reflectbox{$\ddots$}  &  \vdots \\
				\textbf{0} & diag(\mathcal{X}'_{I_4})   &  \dots   &  diag(\mathcal{X}'_{2})
				\end{pmatrix}
				\end{eqnarray*}
				where $\mathcal{X}'_i= \mathcal{X}'(:,:,:,i)$ for $1=1, 2, \dots, I_4$. Using a similar result as \eqref{eq 3.33},  we get  
				\begin{equation}\label{eq 3.100}
				 \left(C_{I_4}\otimes I_{I_3}\right){\tt btph}\left(\mathcal{X}'\right)\left(C_{I_4}^*\otimes I_{I_3}\right)={\tt diag}\left(d\right) \in \mathbb{K}^{I_3I_4 \times I_3I_4}
				\end{equation}
				Then, from \eqref{eq 3.99} and \eqref{eq 3.100} we obtain 
				\begin{eqnarray}\label{bt1}
					\left(C_{I_4}\otimes I_{I_3}\right)\left(I_{I_4}\otimes C_{I_3}\right){\tt btph}\left(\mathcal{X}\right)\left(I_{I_4}\otimes C_{I_3}^*\right)\left(C_{I_4}^*\otimes I_{I_3}\right)={\tt diag}\left(d\right).
				\end{eqnarray}
				Therefore, we can deduce that  ${\tt diag}(d)={\tt bdiag}(\hat{\mathcal{X}})={\tt diag}\left(\hat{\mathcal{X}}\right)$, with $\hat{\mathcal{X}}=\mathcal{X}\times_3 M_{I_3} \times M_{I_4}$ is a scalar-tensor, with $M_{I_i}=W_{I_i}^{-1}C_{I_i}\left(I_i+Z_{I_i}\right)$ for $i=3, 4$. 
				Then, from \eqref{bt1}, we obtain 
				\begin{eqnarray*}
					\left(C_{I_4}\otimes C_{I_3}\right){\tt btph}\left(\mathcal{X}\right)\left(C_{I_4}^*\otimes C_{I_3}^*\right)={\tt bdiag}\left(\hat{\mathcal{X}}\right).
				\end{eqnarray*}
			The reslut for fifth-order tensor could be obtained from the one for the fourth-order case and so on, 
			the result for an $N^{th}$-order scalar-tensor,  can be \textcolor{brown}{found} recursively
			  from the one for an  $(N-1)^{th}$-order scalar-tensor. This is due to the fact that the $\tt btph$ matrix of an $N^{th}$-order tensor is comoputed by using the $\tt btph$ matrix of an $(N-1)^{th}$-order tensor. Therefore, for a general scalar-tensor order $\mathcal{X} \in \mathbb{K}^{1\times 1}_{I_3 \times \dots \times_4 I_N}$, we obtain
				\begin{equation*}
				\left(C_{I_N}\otimes C_{I_{N-1}} \otimes \dots \otimes C_{I_3}\right){\tt btph}(\mathcal{X})\left(C_{I_N}^*\otimes C_{I_{N-1}}^* \otimes \dots \otimes C_{I_3}^*\right)={\tt diag}(\hat{\mathcal{X}}),
				\end{equation*}
			where  $\hat{\mathcal{X}}=\mathcal{X}\times_3 M_{I_3} \times_{4} M_{I_4} \times_5 \dots \times M_{I_N}$.
			\end{proof}
		
		\medskip 
			\noindent  Next, we define the new operator $\mathcal{L}$ as follows.
			\begin{definition}
				Let $\mathcal{L}: \mathbb{K}^{I_1 \times I_2}_{I_3 \times \dots \times I_N}\longrightarrow \mathbb{K}^{I_1 \times I_2}_{I_3 \times \dots \times I_N} $ be the operator  defined by $$\mathcal{L}\left(\mathcal{A}\right)=\hat{\mathcal{A}}=\mathcal{A}\times_3 M_{I_3} \times_4 M_{I_4} \times_5 \dots \times_N M_{I_N},$$
				and its inverse 
				$$\mathcal{A}=\mathcal{L}^{-1}\left(\hat{\mathcal{A}}\right)=\hat{\mathcal{A}}\times_N M_{I_N}^{-1} \times_{N-1} M_{I_{N-1}}^{-1} \times_{N-2} \dots \times_3 M_{I_3}^{-1},$$
				where $M_{I_i}=W_{I_i}^{-1}C_{I_i}\left(I_{I_i}+Z_{I_i}\right)$ with $W_{I_i}={\tt diag}(C_{I_i}(:,1))$ and  $C_{I_i}$ is the matrix of discrete cosine and $Z_{I_i}$, $i =3, 4, \dots , N$ was  already defined.
			\end{definition}
		\medskip
		
		\noindent 
			In the last theorem we proved that a block-Toeplitz-plus-Hankel matrix of  a scalar-tensor is diagonalizable and in the next theorem we will prove that a  block-Toeplitz-plus-Hankel matrix of an $N^{th}$-order tensor is block diagonalizable.
			\medskip
			\begin{theorem}
				Let $\mathcal{X}\in \mathbb{K}^{I_1 \times I_2}_{I_3 \times \dots \times I_N}$, then  its block Toeplitz-plus-Hankel matrix is block diagonalizable and
%				 by\\ $\left(C_{I_K}\otimes C_{I_{K-1}}\otimes \dots \otimes C_{I_3} \otimes I_{I_1}\right)$ and $\left(C_{I_K}^*\otimes C_{I_{K-1}}^*\otimes \dots \otimes C_{I_3}^* \otimes I_{I_2}\right)$, and the block diagonal matrix is exactly ${\tt bdiag}(\mathcal{L}(\mathcal{X}))$, i.e.,
				\begin{equation}\label{eq 20}
				{\tt btph}(\mathcal{X})=\left(C_{I_N}^*\otimes \dots \otimes C_{I_3}^* \otimes I_{I_1}\right) {\tt bdiag}\left(\mathcal{L}(\mathcal{X})\right) \left(C_{I_N}\otimes \dots \otimes C_{I_3} \otimes I_{I_2}\right)
				\end{equation}
				%with the $p^{th}$ block of ${\tt bdiag}(\mathcal{L}\left(\mathcal{X}\right))$ is $\mathcal{L}\left(\mathcal{X}\right)^{p}$.
			\end{theorem}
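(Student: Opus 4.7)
The plan is to reduce the $N^{th}$-order, non-scalar case directly to the scalar-tensor result of Theorem~\ref{theo 8} by viewing $\mathcal{X}$ as an $I_1\times I_2$ array of scalar-tensors and exploiting how $\tt btph$ commutes with that slicing through a Kronecker product decomposition.

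First, for each pair $(i_1,i_2)$, denote by $\mathcal{X}^{(i_1,i_2)}=\mathcal{X}(i_1,i_2,:,\dots,:) \in \mathbb{K}_{I_3 \times \dots \times I_N}$ the corresponding scalar-tensor. Unraveling the recursive definition of $\tt btph$ given just above the statement, one sees that the $(i_1,i_2)$-th entry of the $(p,q)$-th outer block of ${\tt btph}(\mathcal{X})$ is exactly the $(p,q)$-th entry of ${\tt btph}\bigl(\mathcal{X}^{(i_1,i_2)}\bigr)$, where $p,q$ range over multi-indices in $I_3\times\dots\times I_N$. With the convention that the rows of ${\tt btph}(\mathcal{X})$ are ordered lexicographically as $(p,i_1)$ and the columns as $(q,i_2)$, this gives the identity
\begin{equation*}
{\tt btph}(\mathcal{X}) \;=\; \sum_{i_1=1}^{I_1}\sum_{i_2=1}^{I_2} {\tt btph}\bigl(\mathcal{X}^{(i_1,i_2)}\bigr) \,\otimes\, e_{i_1}e_{i_2}^{\,T}.
\end{equation*}

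Set $F = C_{I_N}\otimes C_{I_{N-1}}\otimes\dots\otimes C_{I_3}$. Applying Theorem~\ref{theo 8} to each scalar-tensor $\mathcal{X}^{(i_1,i_2)}$ gives $F\,{\tt btph}\bigl(\mathcal{X}^{(i_1,i_2)}\bigr)\,F^{*} = {\tt diag}\bigl(\widehat{\mathcal{X}^{(i_1,i_2)}}\bigr)$, where $\widehat{\mathcal{X}^{(i_1,i_2)}} = \mathcal{X}^{(i_1,i_2)}\times_3 M_{I_3}\times_4\dots\times_N M_{I_N}$. Conjugating the decomposition above by $F\otimes I_{I_1}$ and $F^{*}\otimes I_{I_2}$ and using $(A\otimes B)(C\otimes D)=(AC)\otimes(BD)$, I obtain
\begin{equation*}
(F\otimes I_{I_1})\,{\tt btph}(\mathcal{X})\,(F^{*}\otimes I_{I_2}) \;=\; \sum_{i_1,i_2}{\tt diag}\bigl(\widehat{\mathcal{X}^{(i_1,i_2)}}\bigr)\otimes e_{i_1}e_{i_2}^{\,T}.
\end{equation*}
The $p$-th diagonal block of the right-hand side is the $I_1\times I_2$ matrix whose $(i_1,i_2)$ entry equals the $p$-th entry of $\widehat{\mathcal{X}^{(i_1,i_2)}}$, which by the definition of $\mathcal{L}$ and of $\tt Vec$ is exactly the $p$-th frontal slice $X^{p}$ of $\mathcal{L}(\mathcal{X})$. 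Hence the right-hand side equals ${\tt bdiag}(\mathcal{L}(\mathcal{X}))$.

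Finally, because each $C_{I_k}$ is orthogonal the matrix $F$ is unitary, so $F^{-1}=F^{*}$, and inverting the previous identity yields
\begin{equation*}
{\tt btph}(\mathcal{X}) \;=\; \bigl(C_{I_N}^{*}\otimes\dots\otimes C_{I_3}^{*}\otimes I_{I_1}\bigr)\,{\tt bdiag}(\mathcal{L}(\mathcal{X}))\,\bigl(C_{I_N}\otimes\dots\otimes C_{I_3}\otimes I_{I_2}\bigr),
\end{equation*}
which is \eqref{eq 20}. The main obstacle I expect is the first step: justifying carefully that the ordering of indices induced by the recursive definition of $\tt btph$ really agrees with the lexicographic convention implicit in the Kronecker products $e_{i_1}e_{i_2}^{\,T}$, since this is what makes the scalar-tensor diagonalization lift block-wise. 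An alternative but essentially equivalent route is an induction on $N$, using \eqref{eq 3.33} in the outermost $I_N$-direction and the induction hypothesis on the inner blocks ${\tt btph}(\mathcal{X}(:,:,\dots,:,k_N))$; the same Kronecker-product composition finishes the proof.
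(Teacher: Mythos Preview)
Your proposal is correct and follows essentially the same route as the paper: decompose ${\tt btph}(\mathcal{X})$ as $\sum_{i_1,i_2}{\tt btph}(\mathcal{X}(i_1,i_2,:,\dots,:))\otimes e_{i_1}e_{i_2}^T$, apply Theorem~\ref{theo 8} to each scalar-tensor summand, and pull the common Kronecker factors outside using the mixed-product property. Your write-up is in fact more detailed than the paper's, which states the Kronecker decomposition and the application of Theorem~\ref{theo 8} without spelling out the identification of the resulting diagonal blocks with the representative matrices of $\mathcal{L}(\mathcal{X})$ or the orthogonality step needed to invert the conjugation.
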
	
			\begin{proof}
		For a tensor $\mathcal{X}\in \mathbb{K}^{I_1 \times I_2}_{I_3 \times \dots \times I_K }$ we have
				\begin{eqnarray*}
					{\tt btph}(\mathcal{X})=\sum_{i_1=1}^{I_1}\sum_{i_2=1}^{I_2}{\tt btph}(\mathcal{X}(i_1,\, i_2,\, :,\dots,\, :)) \otimes e_{i_1}e_{i_2}^T,
				\end{eqnarray*}
				where  $e_{i}$ is the $i$-th canonical vector of size $I_i$ for $i=1, 2$. Therefore, using Thorem \ref{theo 8}, we get  
				\begin{eqnarray*}
					{\tt btph}(\mathcal{X})&=&\sum_{i_1, i_2=1}^{I_1; I_2}\left(C_{I_N}^*\otimes \dots \otimes C_{I_3}^* \right) {\tt bdiag}\left(\mathcal{L}(\mathcal{X}(i_1,\, i_2,\, :,\, ,\dots,\, :))\right)\left(C_{I_N}\otimes  \dots \otimes C_{I_3} \right) \otimes e_{i_1}e_{i_2}^T\\
					&=&  \left(C_{I_N}^*\otimes  \dots \otimes C_{I_3}^* \otimes I_{I_1}\right) {\tt bdiag}(\mathcal{L}\left(\mathcal{X}\right)) \left(C_{I_N}\otimes \dots \otimes C_{I_3} \otimes I_{I_2}\right).
				\end{eqnarray*}
			\end{proof}
			\noindent Next, we define the generalized c-product.
			% In the next, for simplicity we will denote $\textbf{C}_N^{p}=\left(C_{I_N}\otimes C_{I_{N-1}}\otimes \dots \otimes C_{I_3} \otimes I_{p}\right)$ for  $p\geq 1$ and $N\geq 3$.
			\begin{definition}
				\label{def 9}
				Let $\mathcal{A}\in \mathbb{K}^{I_1 \times l}_{I_3 \times \dots \times I_N}$ and $\mathcal{B}\in \mathbb{K}^{l\times I_2}_{I_3 \times \dots \times I_N}$. The generalized c-product $\mathcal{A}*_c\mathcal{B}$ is the tensor of $\mathbb{K}^{I_1 \times I_2}_{I_3 \times \dots \times I_N}$ defined as follows
				\begin{equation}
				\mathcal{A}*_c\mathcal{B}=ten\left({\tt btph}(\mathcal{A}){\tt btph}(\mathcal{B})\right).
				\end{equation}
			\end{definition}
		In the next definition, we generalize the face-wice product that was already defined for the third-order tensors.
			\begin{definition}{Generalized face-wise product}\\
				Let $\mathcal{A}\in \mathbb{K}^{I_1 \times l}_{I_3 \times \dots \times I_N}$ and $\mathcal{B}\in \mathbb{K}^{l\times I_2}_{I_3 \times \dots \times I_N}$, we define the face-wise product between $\mathcal{A}$ and $\mathcal{B}$ by the tensor $\mathcal{C}=\mathcal{A}\triangle \mathcal{B} \in \mathbb{K}^{I_1 \times I_2}_{I_3 \times \dots \times I_N}$, where the $p^{th}$ representative matrix of $\mathcal{C}$, is computed by the product of the $p^{th}$ representative matrices of $\mathcal{A}$ and $\mathcal{B}$, respectively. i.e.,
				\begin{equation}
				C^p=A^p B^p, \; \; p=1,\, 2,\, \dots ,\, P,
				\end{equation}
			where  the matrices $A^p$, $B^p$ and $C^p$ for $p=1, 2, \dots, P$ are the representative matrices given by \eqref{eq 3.5} of $\mathcal{A}$, $\mathcal{B}$ and $\mathcal{C}$, respectively.\\
			\end{definition}
		\medskip
			\begin{lemm}
				The generalized c-product of  two $N^{th}$-order tensors $\mathcal{A}\in \mathbb{K}^{I_1 \times l}_{I_3 \times \dots \times I_N}$ and $\mathcal{B}\in \mathbb{K}^{l\times I_2}_{I_3 \times \dots \times I_N}$ can be also computed in the cosine domain by 
				\begin{equation}
				\mathcal{A}*_c\mathcal{B}=\mathcal{L}^{-1}\left(\mathcal{L}(\mathcal{A})\triangle \mathcal{L}(\mathcal{B})\right).
				\end{equation}
			\end{lemm}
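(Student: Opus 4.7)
The plan is to substitute the block-diagonalization identity from the preceding theorem (equation (20)) into the definition of $*_c$ from Definition \ref{def 9}, and then to read off the face-wise product on the transform side.

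First I would write, for both factors,
\begin{equation*}
\mathtt{btph}(\mathcal{A})=\bigl(C_{I_N}^{*}\otimes\cdots\otimes C_{I_3}^{*}\otimes I_{I_1}\bigr)\,\mathtt{bdiag}\bigl(\mathcal{L}(\mathcal{A})\bigr)\,\bigl(C_{I_N}\otimes\cdots\otimes C_{I_3}\otimes I_{l}\bigr),
\end{equation*}
and the analogous identity for $\mathcal{B}$ (with $I_{l}$ on the left and $I_{I_2}$ on the right). Multiplying these two expressions, the inner middle factor $\bigl(C_{I_N}\otimes\cdots\otimes C_{I_3}\otimes I_{l}\bigr)\bigl(C_{I_N}^{*}\otimes\cdots\otimes C_{I_3}^{*}\otimes I_{l}\bigr)$ collapses to the identity by the mixed-product property of Kronecker products together with the orthogonality of each $C_{I_i}$. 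Thus
\begin{equation*}
\mathtt{btph}(\mathcal{A})\,\mathtt{btph}(\mathcal{B})=\bigl(C_{I_N}^{*}\!\otimes\!\cdots\!\otimes\! C_{I_3}^{*}\!\otimes\! I_{I_1}\bigr)\,\mathtt{bdiag}\bigl(\mathcal{L}(\mathcal{A})\bigr)\,\mathtt{bdiag}\bigl(\mathcal{L}(\mathcal{B})\bigr)\,\bigl(C_{I_N}\!\otimes\!\cdots\!\otimes\! C_{I_3}\!\otimes\! I_{I_2}\bigr).
\end{equation*}

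Next I would observe that, for block diagonal matrices, $\mathtt{bdiag}(\mathcal{L}(\mathcal{A}))\,\mathtt{bdiag}(\mathcal{L}(\mathcal{B}))=\mathtt{bdiag}\bigl(\mathcal{L}(\mathcal{A})\triangle\mathcal{L}(\mathcal{B})\bigr)$, which is exactly the definition of the generalized face-wise product (the $p$-th diagonal block of the product is $A^{p}B^{p}$, matching the representative matrices of $\mathcal{L}(\mathcal{A})\triangle\mathcal{L}(\mathcal{B})$). Consequently, letting $\mathcal{C}:=\mathcal{L}(\mathcal{A})\triangle\mathcal{L}(\mathcal{B})$,
\begin{equation*}
\mathtt{btph}(\mathcal{A})\,\mathtt{btph}(\mathcal{B})=\bigl(C_{I_N}^{*}\otimes\cdots\otimes C_{I_3}^{*}\otimes I_{I_1}\bigr)\,\mathtt{bdiag}(\mathcal{C})\,\bigl(C_{I_N}\otimes\cdots\otimes C_{I_3}\otimes I_{I_2}\bigr),
\end{equation*}
which is precisely the right-hand side of (20) applied to the tensor $\mathcal{L}^{-1}(\mathcal{C})$. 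In other words, this product is $\mathtt{btph}\bigl(\mathcal{L}^{-1}(\mathcal{C})\bigr)$, and applying $\mathtt{ten}$ to both sides yields
\begin{equation*}
\mathcal{A}*_{c}\mathcal{B}=\mathtt{ten}\bigl(\mathtt{btph}(\mathcal{A})\,\mathtt{btph}(\mathcal{B})\bigr)=\mathcal{L}^{-1}\bigl(\mathcal{L}(\mathcal{A})\triangle\mathcal{L}(\mathcal{B})\bigr).
\end{equation*}

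The only delicate step is the second one: one must verify that the matrix identification $\mathtt{bdiag}(\mathcal{C})$ with $\mathcal{C}=\mathcal{L}(\mathcal{A})\triangle\mathcal{L}(\mathcal{B})$ is consistent with the indexing used by the $\mathtt{Vec}$ operator defined in (\ref{eq 3.5}), i.e.\ that the blocks appearing along the diagonal after multiplying the two $\mathtt{bdiag}$'s are indexed in the same linear order as the frontal-like slices $X^{p}$. This is the main (though routine) bookkeeping obstacle; it follows because both sides enumerate slices by the same mixed-radix index $p=k_3+\sum_{i=4}^{N}(k_i-1)P/\prod_{s=i}^{N}I_s$ used throughout the construction of $\mathtt{btph}$ recursively.
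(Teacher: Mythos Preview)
Your proof is correct and follows essentially the same route as the paper: both arguments use the block-diagonalization identity (equation~(20)) to rewrite ${\tt btph}(\mathcal{A}){\tt btph}(\mathcal{B})$, collapse the inner Kronecker factors via orthogonality of the $C_{I_i}$, identify the product of block-diagonals with ${\tt bdiag}(\mathcal{L}(\mathcal{A})\triangle\mathcal{L}(\mathcal{B}))$, and then recognize the result as ${\tt btph}\bigl(\mathcal{L}^{-1}(\mathcal{L}(\mathcal{A})\triangle\mathcal{L}(\mathcal{B}))\bigr)$. The only cosmetic difference is that the paper inserts identity factors $(\mathbf{C}_N^{\,p})^{*}\mathbf{C}_N^{\,p}$ on the outside and in the middle before invoking (20), whereas you substitute (20) directly for each factor; your version is slightly cleaner, and your explicit remark about the consistency of the ${\tt Vec}$ indexing is a point the paper leaves implicit.
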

		
			\begin{proof}
				The generalized c-product of the two  tensors $\mathcal{A}\in \mathbb{K}^{I_1 \times l}_{I_3 \times \dots I_N}$ and $\mathcal{B}\in \mathbb{K}^{l \times I_2}_{I_3 \times \dots I_N}$,  is given by 
				$$\mathcal{A}*_c\mathcal{B}=ten\left({\tt btph}(\mathcal{A}){\tt btph}(\mathcal{B})\right)$$
				Therefore, using the notation  ${\textbf{C}}_N^{\,p}=\left(C_{I_N}\otimes  \dots \otimes C_{I_3} \otimes I_{p}\right)$ for  $p\geq 1$ and $N\geq 3$, we get   
				\begin{eqnarray*}
					{\tt btph}\left(\mathcal{A}*_c \mathcal{B}\right)&=&{\tt btph}\left(\mathcal{A}\right){\tt btph}\left(\mathcal{B}\right)\\
					&=& \left(\textbf{C}_N^{\,I_1}\right)^* \textbf{C}_N^{\,I_1} {\tt btph}\left(\mathcal{A}\right) \left(\textbf{C}_N^{\,l}\right)^* \textbf{C}_N^{\,l} {\tt btph}\left(\mathcal{B}\right) \left(\textbf{C}_N^{I_2}\right)^* \textbf{C}_N^{\,I_2}\\
					&=& \left(\textbf{C}_N^{\,I_1}\right)^* {\tt bdiag}(\mathcal{L}\left(\mathcal{A}\right)) {\tt bdiag}(\mathcal{L}\left(\mathcal{B}\right))\textbf{C}_N^{\,I_2}\\
					&=& {\tt btph}\left(\mathcal{L}^{-1}\left(\mathcal{L}\left(\mathcal{A}\right)\triangle \mathcal{L}\left(\mathcal{A}\right)\right)\right),  
				\end{eqnarray*}
				It follows that 
				\begin{equation*}
				\mathcal{A}*_c\mathcal{B}=\mathcal{L}^{-1}\left(\mathcal{L}(\mathcal{A})\triangle \mathcal{L}(\mathcal{B})\right).
				\end{equation*}
			\end{proof}
%			Motivating by the above hight tensor-tensor product for cosine transform and for Fourier transform in \cite{martin2013order}, we will create also for tensors of orders higher than three a family of tensor-tensor products defined directly in the transform domain for an invertible linear transform without being need a particular type of matrices like block Toeplitz-plus-Hankel in \eqref{eq16} or a block circulant matrix in \cite{martin2013order}. This way adds more flexibility for tensor-tensor product to use the operator that will be adequate on a particular application.
		\end{subsection}
	
		\begin{subsection}{Generalized tensor-tensor product} 
In this subsection, we define a general tensor-tensor $\mathcal{L}$ product for high-order tensor. We first define $\mathcal{L}$ as the following operator 
			\begin{eqnarray*}
		\mathcal{L}: \mathbb{K}^{I_1 \times I_2}_{I_3 \times \dots \times I_N} & \longrightarrow &  \mathbb{K}^{I_1 \times I_2}_{I_3 \times \dots \times I_N}\\
		\mathcal{A} & \longmapsto &  \mathcal{L}\left(\mathcal{A}\right)=\mathcal{A}\times_3 M_3 \dots \times_N M_N 
		\end{eqnarray*} 
		with $M_i \in \mathbb{K}^{I_i \times I_i}$ such that  $M_i=\alpha_i R_i$ for $i=3,\, 4,\, \dots ,\, N$, where $\alpha_i >0$ and $R_i$ is an unitary matrix. The inverse operator of $\mathcal{L}$ is defined as $\mathcal{L}^{-1}\left(\mathcal{A}\right)=\mathcal{A}\times_3 M_3^{-1} \times_4 \dots \times_N M_N^{-1}$. We will denote $\alpha=\alpha_3 \alpha_4\dots \alpha_N.$ 
		Next,  we   need the relation between the norm of a tensor  and its norm in the transformed domain (for example Fourier or cosine) given by 
			\begin{equation}
			\left\Vert \mathcal{A}\right\Vert_F=\dfrac{1}{\sqrt{\alpha}}\left\Vert \mathcal{L}\left(\mathcal{A}\right)\right\Vert_F,
			\end{equation}
		and we also have 
			\begin{equation}
			\left<\mathcal{A},\, \mathcal{B}\right>=\dfrac{1}{\alpha} \left<\mathcal{L}\left(\mathcal{A}\right),\, \mathcal{L}\left(\mathcal{B}\right) \right>, 
			\end{equation}
		%where {\bf $\hat{\mathcal{A}}$} is given by .........??????\\	
	In the next and for any $N^{th}$-order tensor $\mathcal{X}$,  we will denote the $p^{th}$ representative matrix \eqref{eq 3.5} of $\mathcal{L}\left(\mathcal{X}\right)$ by $\mathcal{L}(\mathcal{X})^{p}$. Now we can  define the generalized $*_\mathcal{L}$-product
\medskip	
			\begin{definition}
				Let $\mathcal{L}$ be the operator defined  above, then  the generalized $*_\mathcal{L}$-product of two $N^{th}$-order tensors $\mathcal{A} \in \mathbb{K}^{I_1 \times l}_{I_3 \times \dots I_N}$ and $\mathcal{B} \in \mathbb{K}^{l \times I_2}_{I_3 \times \dots \times I_N}$ is given by 
				\begin{equation}
				\mathcal{A}*_{\mathcal{L}}\mathcal{B}=\mathcal{L}^{-1}\left(\mathcal{L}\left(\mathcal{A}\right)\triangle \mathcal{L}\left(\mathcal{B}\right)\right) \in \mathbb{K}^{I_1 \times I_2}_{I_3 \times \dots \times I_N}
				\end{equation}
			\end{definition}
			The whole steps are   summarized in the following algorithm.
			\begin{algorithm}[H]
				\caption{The $*_\mathcal{L}$-product.}
				\label{alg:1}
				\begin{algorithmic}[1]
					\STATE \textbf{Inputs:} $\mathcal{A}\in \mathbb{K}^{n_1\times n}_{  n_3 \times \dots \times n_N}$ and $\mathcal{B}\in \mathbb{K}^{n\times n_2}_{ n_3\times \dots \times n_N}$. 
					\STATE \textbf{Output:} $\mathcal{C}=\mathcal{A}*\mathcal{B}\in \mathbb{K}^{n_1\times n_2}_{n_3 \dots \times n_K}$ .
					\STATE Compute $\hat{\mathcal{A}}=\mathcal{L}\left(\mathcal{A}\right)$ and $\hat{\mathcal{B}}=\mathcal{L}\left(\mathcal{B}\right)$.
					\FOR {$i=1, \ldots ,P$}
					\STATE  $\hat{\mathcal{C}}^{i}=\hat{\mathcal{A}}^{i}\hat{\mathcal{B}}^{i}$
					\ENDFOR
					\STATE $\mathcal{C}=\mathcal{L}^{-1}\left(\hat{\mathcal{C}}\right).$
				\end{algorithmic}
			\end{algorithm}
			\begin{prop}
				Let $\mathcal{X}\in \mathbb{K}^{I_1 \times l}_{I_3\times \dots \times I_K}$ and $\mathcal{Y}\in \mathbb{K}^{l \times I_2}_{I_3\times \dots \times I_K}$ two $K^{th}$-order tensors. Then we have
%				can compute the $(i,j)^{th}$ scalar-tensor of $\mathcal{A}*_\mathcal{L}\mathcal{B}$ by using the $*_\mathcal{L}$-product of some scalar-tensors of $\mathcal{X}$ and $\mathcal{B}$, i.e.,
				\begin{equation}
				\left(\mathcal{X}*_\mathcal{L}\mathcal{Y}\right)_{i,j}=\sum_{k=1}^{l}\mathcal{X}_{i,k}*_\mathcal{L}\mathcal{Y}_{k,j},\; \text{for}\; 1\leq i\leq I_1 \; \text{and}\; 1\leq j\leq I_2,
				\end{equation}
				where $\mathcal{X}_{i,j}=\mathcal{X}(i,j,:,:,\dots,:)\in \mathbb{K}_{I_3\times I_4 \times \dots \times I_K}$  is the $(i,j)^{th}$ scalar-tensor of $\mathcal{X}$.
			\end{prop}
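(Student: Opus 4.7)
The plan is to unfold the definition $\mathcal{X}*_{\mathcal{L}}\mathcal{Y}=\mathcal{L}^{-1}(\mathcal{L}(\mathcal{X})\triangle\mathcal{L}(\mathcal{Y}))$ and check the identity at the level of representative matrices, then re-assemble using the linearity of $\mathcal{L}^{-1}$. The pivotal observation is that $\mathcal{L}$ is built from $n$-mode products with $n\ge 3$, so it acts only on modes $3,\ldots,K$ and does not mix indices in modes $1$ and $2$. Consequently, for every fixed pair $(i,k)$, the scalar-tensor obtained by freezing the first two indices commutes with $\mathcal{L}$:
\begin{equation*}
\mathcal{L}(\mathcal{X})(i,k,:,\ldots,:)=\mathcal{L}\bigl(\mathcal{X}(i,k,:,\ldots,:)\bigr)=\mathcal{L}(\mathcal{X}_{i,k}).
\end{equation*}

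With this in hand, I would first work slicewise in the transformed domain. Fix a representative-matrix index $p\in\{1,\ldots,P\}$. By the definition of the generalized face-wise product, the $p$-th representative matrix of $\mathcal{L}(\mathcal{X})\triangle\mathcal{L}(\mathcal{Y})$ is exactly the matrix product $\mathcal{L}(\mathcal{X})^{p}\,\mathcal{L}(\mathcal{Y})^{p}$. Reading off its $(i,j)$ entry as a standard matrix product and using the commutation observation above, I obtain
\begin{equation*}
\bigl[\mathcal{L}(\mathcal{X})\triangle\mathcal{L}(\mathcal{Y})\bigr]^{p}_{i,j}=\sum_{k=1}^{l}\mathcal{L}(\mathcal{X})^{p}_{i,k}\,\mathcal{L}(\mathcal{Y})^{p}_{k,j}=\sum_{k=1}^{l}\mathcal{L}(\mathcal{X}_{i,k})^{p}\,\mathcal{L}(\mathcal{Y}_{k,j})^{p}.
\end{equation*}
Since the scalar-tensors $\mathcal{L}(\mathcal{X}_{i,k})$ and $\mathcal{L}(\mathcal{Y}_{k,j})$ have scalar representative entries, the product $\mathcal{L}(\mathcal{X}_{i,k})^{p}\mathcal{L}(\mathcal{Y}_{k,j})^{p}$ coincides with the $p$-th representative of $\mathcal{L}(\mathcal{X}_{i,k})\triangle\mathcal{L}(\mathcal{Y}_{k,j})$. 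Thus, slicewise (hence globally as scalar-tensors),
\begin{equation*}
\bigl(\mathcal{L}(\mathcal{X})\triangle\mathcal{L}(\mathcal{Y})\bigr)_{i,j}=\sum_{k=1}^{l}\mathcal{L}(\mathcal{X}_{i,k})\triangle\mathcal{L}(\mathcal{Y}_{k,j}).
\end{equation*}

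Finally, I apply $\mathcal{L}^{-1}$ to both sides. Because $\mathcal{L}^{-1}$ is a composition of $n$-mode products with $n\ge 3$, it is linear and also commutes with the operation of selecting the $(i,j)$ scalar-tensor in modes $1$ and $2$. Pulling the sum outside and invoking the definition of $*_{\mathcal{L}}$ applied to scalar-tensors gives
\begin{equation*}
(\mathcal{X}*_{\mathcal{L}}\mathcal{Y})_{i,j}=\mathcal{L}^{-1}\!\left(\sum_{k=1}^{l}\mathcal{L}(\mathcal{X}_{i,k})\triangle\mathcal{L}(\mathcal{Y}_{k,j})\right)=\sum_{k=1}^{l}\mathcal{X}_{i,k}*_{\mathcal{L}}\mathcal{Y}_{k,j},
\end{equation*}
which is the claimed identity.

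The only step that requires a little care is the commutation of $\mathcal{L}$ (and $\mathcal{L}^{-1}$) with the operation of restricting to a fixed $(i,j)$ pair in modes $1,2$; this is a direct consequence of the $n$-mode product formula for $n\ge 3$, where the sum index runs over a mode disjoint from modes $1$ and $2$. Everything else is bookkeeping: matrix-product entries in the transformed domain and linearity of the inverse transform.
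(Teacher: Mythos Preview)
Your argument is correct and follows essentially the same route as the paper's proof: unfold the definition $\mathcal{X}*_{\mathcal{L}}\mathcal{Y}=\mathcal{L}^{-1}(\mathcal{L}(\mathcal{X})\triangle\mathcal{L}(\mathcal{Y}))$, use that $\mathcal{L}$ and $\mathcal{L}^{-1}$ commute with restriction to a fixed $(i,j)$ in modes $1,2$, expand the face-wise product entrywise as a matrix product, and conclude by linearity. The only difference is cosmetic---you spell out the slicewise computation at the level of representative matrices, whereas the paper compresses the same steps into a four-line chain of equalities.
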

			\begin{proof}
			For or $1\leq i\leq I_1$ and $1\leq j\leq I_2$, we have
				\begin{eqnarray*}
					\left(\mathcal{X}*_\mathcal{L}\mathcal{Y}\right)_{i,j}&=&\mathcal{L}^{-1}\left(\mathcal{L}\left(\mathcal{X}\right)\triangle \mathcal{L}\left(\mathcal{Y}\right)\right)_{i,j}\\
					&=&\mathcal{L}^{-1}\left(\sum_{k=1}^{l}\mathcal{L}\left(\mathcal{X}\right)_{i,k}\triangle \mathcal{L}\left(\mathcal{Y}\right)_{k,j}\right)\\
					&=&\sum_{k=1}^{l}\mathcal{L}^{-1}\left(\mathcal{L}\left(\mathcal{X}_{i,k}\right)\triangle \mathcal{L}\left(\mathcal{Y}_{k,j}\right)\right)\\
					&=& \sum_{k=1}^{l} \mathcal{X}_{i,k}*_\mathcal{L}\mathcal{Y}_{k,j},
				\end{eqnarray*}
			which shows the result.
			\end{proof}
			\begin{prop}
			Let  $\mathcal{A}\in \mathbb{K}^{I_1 \times l}_{I_3\times \dots \times I_K}$ and $\mathcal{B}\in \mathbb{K}^{l\times I_2}_{I_3\times \dots \times I_K}$ be two $K^{th}$-order tensors, then we can express the $*_\mathcal{L}$-product of $\mathcal{A}$ and $\mathcal{B}$ as
				\begin{equation}
				\mathcal{A}*_\mathcal{L}\mathcal{B}=\left[\mathcal{A}*_\mathcal{L} \overrightarrow{\mathcal{B}}_1,\, \mathcal{A}*_\mathcal{L} \overrightarrow{\mathcal{B}}_2,\, \dots ,\,\mathcal{A}*_\mathcal{L} \overrightarrow{\mathcal{B}}_{I_2}\right],
				\end{equation}
				where $\overrightarrow{\mathcal{B}}_{i_2}=\mathcal{B}(:,i_2,:,\dots,:)\in \mathbb{K}^l_{I_3\times I_4 \times \dots \times I_K};\; i_2=1,\ldots,I_2$.
			\end{prop}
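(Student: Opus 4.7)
My plan is to unfold the definition $\mathcal{A}*_\mathcal{L}\mathcal{B}=\mathcal{L}^{-1}(\mathcal{L}(\mathcal{A})\triangle\mathcal{L}(\mathcal{B}))$ and exploit the fact that both $\mathcal{L}$ and $\mathcal{L}^{-1}$ act only on modes $3,4,\ldots,K$, so they commute with slicing in the second mode. Combined with the observation that the face-wise product is a slicewise matrix multiplication and ordinary matrix multiplication can be done column by column, the claim reduces to bookkeeping.

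First I would establish the commutation lemma: for any $i_2\in\{1,\ldots,I_2\}$,
\begin{equation*}
\mathcal{L}(\mathcal{B})(:,i_2,:,\ldots,:)=\mathcal{L}\bigl(\overrightarrow{\mathcal{B}}_{i_2}\bigr),
\end{equation*}
which follows immediately from $\mathcal{L}(\mathcal{B})=\mathcal{B}\times_3 M_3\times_4\cdots\times_K M_K$, since each $n$-mode product for $n\geq 3$ leaves the mode-$2$ index untouched. The same identity holds with $\mathcal{L}^{-1}$ in place of $\mathcal{L}$.

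Next I would use the face-wise definition of $\triangle$: for every representative-matrix index $p=1,\ldots,P$, one has $(\mathcal{L}(\mathcal{A})\triangle\mathcal{L}(\mathcal{B}))^{p}=\mathcal{L}(\mathcal{A})^{p}\,\mathcal{L}(\mathcal{B})^{p}$. Taking the $i_2$-th column of this matrix equation gives
\begin{equation*}
\bigl(\mathcal{L}(\mathcal{A})\triangle\mathcal{L}(\mathcal{B})\bigr)^{p}(:,i_2)=\mathcal{L}(\mathcal{A})^{p}\,\mathcal{L}(\mathcal{B})^{p}(:,i_2)=\mathcal{L}(\mathcal{A})^{p}\,\mathcal{L}\bigl(\overrightarrow{\mathcal{B}}_{i_2}\bigr)^{p},
\end{equation*}
where the last equality uses the commutation lemma applied representative-matrix-wise. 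The right-hand side is exactly the $p$-th representative of $\mathcal{L}(\mathcal{A})\triangle\mathcal{L}(\overrightarrow{\mathcal{B}}_{i_2})$, so selecting the $i_2$-th mode-$2$ slice commutes with the face-wise product.

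Finally I would apply $\mathcal{L}^{-1}$, again commuting it with mode-$2$ slicing, to obtain
\begin{equation*}
\bigl(\mathcal{A}*_{\mathcal{L}}\mathcal{B}\bigr)(:,i_2,:,\ldots,:)=\mathcal{L}^{-1}\bigl(\mathcal{L}(\mathcal{A})\triangle\mathcal{L}(\overrightarrow{\mathcal{B}}_{i_2})\bigr)=\mathcal{A}*_{\mathcal{L}}\overrightarrow{\mathcal{B}}_{i_2},
\end{equation*}
and concatenating these $I_2$ slices along the second mode yields the stated block decomposition. The only subtle point, and thus the main (very mild) obstacle, is justifying that slicing in the second mode commutes with $\mathcal{L}$, $\mathcal{L}^{-1}$, and $\triangle$; once this is in hand the identity is essentially columnwise bilinearity of matrix multiplication transported through the transform.
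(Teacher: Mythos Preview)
Your proposal is correct and follows essentially the same route as the paper's own proof: fix $i_2$, take the $i_2$-th mode-$2$ slice of $\mathcal{L}^{-1}(\mathcal{L}(\mathcal{A})\triangle\mathcal{L}(\mathcal{B}))$, and successively commute the slicing past $\mathcal{L}^{-1}$, past the face-wise product, and past $\mathcal{L}$. The paper writes this as a four-line chain of equalities without justifying the commutations; you spell out explicitly why each commutation is legitimate, which is a welcome addition but not a different argument.
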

		\medskip
			\begin{proof}
			From the definition of the  $*_\mathcal{L}$-product of $\mathcal{A}$ and $\mathcal{B}$ and for $1\leq i_2 \leq I_2$, we get 
				\begin{eqnarray}
				\overrightarrow{\left(\mathcal{A}*_\mathcal{L}\mathcal{B}\right)}_{i_2}&=&\overrightarrow{\mathcal{L}^{-1}\left(\mathcal{L}\left(\mathcal{A}\right)\triangle \mathcal{L}\left(\mathcal{B}\right)\right)}_{i_2}\\
				&=&\mathcal{L}^{-1}\left(\mathcal{L}\left(\mathcal{A}\right)\triangle \overrightarrow{\mathcal{L}\left(\mathcal{B}\right)}_{i_2}\right)\\
				&=&\mathcal{L}^{-1}\left(\mathcal{L}\left(\mathcal{A}\right)\triangle \mathcal{L}\left(\overrightarrow{\mathcal{B}}_{i_2}\right)\right)\\
				&=& \mathcal{A}*_\mathcal{L}\overrightarrow{\mathcal{B}}_{i_2},
				\end{eqnarray}
			which gives the desired result.
			\end{proof}
		
		\medskip
	\noindent 	Related to the generalized $*_\mathcal{L}$-product , we give the definitions of the identity, transpose and orthogonal tensors.
			\begin{definition}{(The identity tensor)}\\
			The tensor identity tensor  $\mathcal{I}\in \mathbb{K}^{I_1 \times I_1}_{I_3 \times \dots I_N}$ is  such that $\left( \mathcal{L}(\mathcal{I})\right)^{p}=I$ for $p=1, 2, \dots, P$ where $\left(  \mathcal{L}(\mathcal{I}) \right)^{p}$ is the $p^{th}$ representative matrix of $\mathcal{L}\left(\mathcal{I}\right)$.\\
			\end{definition}
			From the previous definition, we can conclude that if  $\mathcal{A}\in \mathbb{K}^{I_1\times I_2}_{I_3\times \dots \times I_N}$, then  $\mathcal{A}*_{\mathcal{L}}\mathcal{I}=\mathcal{I}*_{\mathcal{L}}\mathcal{A}=\mathcal{A}$; because $\mathcal{A}*_{\mathcal{L}}\mathcal{I}=\mathcal{L}^{-1}\left(\mathcal{L}(\mathcal{A})\triangle \mathcal{L}(\mathcal{I})\right)=\mathcal{L}^{-1}\left(\mathcal{L}(\mathcal{A})\right),$ and the same for $\mathcal{I}*_{\mathcal{L}}\mathcal{A}.$
			\begin{definition}{(The transpose)}\\
				Let $\mathcal{A}\in \mathbb{K}^{I_1 \times I_2}_{I_3 \times \dots \times I_N}$,  the transpose  $\mathcal{A}^T$ of the tensor $\mathcal{A}$ is such that  $\mathcal{L}\left(\mathcal{A}^T\right)^p=\left(\mathcal{L}\left(\mathcal{A}\right)^p\right)^T$ for $p=1, 2, \dots , P$.\\
			\end{definition}
			This definition ensure the multiplication reversal property for the transpose under the $*_\mathcal{L}$-product, i.e., for $\mathcal{A}$ and $\mathcal{B}$ two $N^{th}$-order tensors of appropriate sizes, we get $\left(\mathcal{A}*_{\mathcal{L}}\mathcal{B}\right)^T=\mathcal{B}^T*_{\mathcal{L}}\mathcal{A}^T$; for explanation we have
			\begin{eqnarray*}
				\mathcal{L}\left(\left(\mathcal{A}*_\mathcal{L} \mathcal{B}\right)^T\right)^p=\left(\mathcal{L}\left(\mathcal{A}*_\mathcal{L}\mathcal{B}\right)^p\right)^T&=&\left(\mathcal{L}\left(\mathcal{A}\right)^p\mathcal{L}\left(\mathcal{B}\right)^p\right)^T \\
				&=&\left(\mathcal{L}\left(\mathcal{B}\right)^p\right)^T\left(\mathcal{L}\left(\mathcal{A}\right)^p\right)^T\\
				&=&\left(\mathcal{L}\left(\mathcal{B}^T\right)\right)^p\left(\mathcal{L}\left(\mathcal{A}^T\right)\right)^p\\
				&=& \mathcal{L}\left(\mathcal{B}^T*_\mathcal{L}\mathcal{A}^T\right)^p
			\end{eqnarray*}
			\begin{definition}{(Orthogonal tensor)}\\
				The tensor  $\mathcal{Q}\in \mathbb{K}^{I_1 \times I_1}_{I_3 \times \dots \times I_N}$ is  orthogonal  under the $*_\mathcal{L}$-product, iff  $\mathcal{Q}*_{\mathcal{L}}\mathcal{Q}^T=\mathcal{Q}^T *_{\mathcal{L}}\mathcal{Q}=\mathcal{I}$ which means that for each $i\in \{1,\, 2,\, \dots ,\, P\}$, $\mathcal{L}\left(\mathcal{Q}\right)^i$ is an orthogonal matrix.\\
			\end{definition}
		
		\noindent Notice that if $\mathcal{Q}\in \mathbb{K}^{I_1 \times I_1}_{I_3 \times \dots \times I_N}$ is  orthogonal, then   for an  $N^{th}$-order tensor $\mathcal{A}$ of an appropriate size, we have  
			\begin{eqnarray}
			\left\Vert \mathcal{A}*_\mathcal{L}\mathcal{Q}\right\Vert_F^2=\dfrac{1}{c}\left\Vert \mathcal{L}\left(\mathcal{A}\right)\triangle \mathcal{L}\left(\mathcal{Q}\right)\right\Vert_F^2&=&\dfrac{1}{c}\sum_{i=1}^{P}\left\Vert \mathcal{L}\left(\mathcal{A}\right)^i  \mathcal{L}\left(\mathcal{Q}\right)^i\right\Vert_F^2 \\
			&=&\dfrac{1}{c}\sum_{i=1}^{P}\left\Vert \mathcal{L}\left(\mathcal{A}\right)^i  \right\Vert_F^2 \\
			&=& \left\Vert \mathcal{A}\right\Vert_F^2
			\end{eqnarray}
			\begin{definition}{f-diagonal tensor}\\
			An $N^{th}$-order tensor $\mathcal{X}$ is f-diagonal, if each $\mathcal{L}(\mathcal{X})^p$ is a diagonal matrix for all $p$ in $\{1, 2, \dots, P \}$.
			\end{definition}
		
		\noindent 
			\begin{theorem}
			The set 	$\left( \mathbb{K}_{I_3\times I_4 \times \dots \times I_N},+,*_\mathcal{L} \right)$ is a commutaive ring.
			\end{theorem}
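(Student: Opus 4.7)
The plan is to transfer the entire ring structure to the transformed domain via $\mathcal{L}$, where the problem trivializes. The key observation is that for a scalar-tensor $\mathcal{X} \in \mathbb{K}_{I_3 \times \dots \times I_N}$, each representative ``matrix'' $X^p$ from \eqref{eq 3.5} is in fact a $1 \times 1$ matrix, i.e., a scalar in $\mathbb{C}$. Thus $\mathcal{L}$ gives a bijection between $\mathbb{K}_{I_3 \times \dots \times I_N}$ and $\mathbb{C}^P$ (where $P = I_3 I_4 \cdots I_N$), sending $\mathcal{X}$ to the $P$-tuple $(\mathcal{L}(\mathcal{X})^1, \dots, \mathcal{L}(\mathcal{X})^P)$ of its representative scalars.

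First I would verify that this bijection respects both operations. Since $\mathcal{L}$ is defined by $n$-mode products with invertible matrices, it is $\mathbb{C}$-linear, so $\mathcal{L}(\mathcal{A} + \mathcal{B}) = \mathcal{L}(\mathcal{A}) + \mathcal{L}(\mathcal{B})$; transporting this through the bijection, addition corresponds to componentwise addition in $\mathbb{C}^P$. For the multiplication, the definition $\mathcal{A} *_\mathcal{L} \mathcal{B} = \mathcal{L}^{-1}(\mathcal{L}(\mathcal{A}) \triangle \mathcal{L}(\mathcal{B}))$ reduces, in the scalar-tensor case, to $1 \times 1$ matrix products on each representative entry; these are just products in $\mathbb{C}$. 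Therefore $*_\mathcal{L}$ corresponds to componentwise multiplication in $\mathbb{C}^P$.

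Once this isomorphism is established, the ring axioms are immediate. The structure $(\mathbb{C}^P, +, \cdot)$ with componentwise operations is the $P$-fold direct product of the field $\mathbb{C}$ with itself, hence a commutative ring: associativity, commutativity and distributivity of both operations follow componentwise from the same properties in $\mathbb{C}$. The additive identity is the zero scalar-tensor (whose transform is the zero tuple), and the multiplicative identity is precisely the identity tensor $\mathcal{I}$ of Definition that sends to the all-ones tuple in $\mathbb{C}^P$ under $\mathcal{L}$; the fact that $\mathcal{I} *_\mathcal{L} \mathcal{A} = \mathcal{A} *_\mathcal{L} \mathcal{I} = \mathcal{A}$ was already noted after that definition.

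I do not expect a serious obstacle: the only point that deserves care is checking that the identity tensor for the $I_1 = I_2 = 1$ case lies inside the scalar-tensor space (so that the ring has a unit belonging to the underlying set), and that the face-wise product of scalar-tensors really is componentwise scalar multiplication rather than something more elaborate. Both are unpacking of definitions. Everything else is inherited from the commutative-ring structure of $\mathbb{C}^P$ via the $\mathbb{C}$-algebra isomorphism induced by $\mathcal{L}$.
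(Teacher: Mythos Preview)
Your proof is correct and rests on the same underlying mechanism as the paper's: transferring the operations to the transformed domain via $\mathcal{L}$, where the face-wise product of scalar-tensors is just componentwise multiplication in $\mathbb{C}$. The paper, however, checks the ring axioms one at a time by direct computation---associativity, distributivity, and commutativity are each established separately by unfolding the definition $\mathcal{A} *_\mathcal{L} \mathcal{B} = \mathcal{L}^{-1}(\mathcal{L}(\mathcal{A}) \triangle \mathcal{L}(\mathcal{B}))$ and using linearity of $\mathcal{L}$ together with the corresponding property of $\triangle$. Your packaging of the same idea as a single bijection onto $(\mathbb{C}^P,+,\cdot)$ is more economical: once that map is seen to respect both operations, every axiom (including the existence of a unit) is inherited at once from the product ring. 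The paper's approach is marginally more elementary in that it never names the word ``isomorphism,'' but your argument is equally rigorous and makes the reason for the result more transparent.
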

			\begin{proof}
				It is easy to prove that $\left(\mathbb{K}_{I_3 \times I_4 \times \dots \times I_N},+\right)$ is an abelian group with $\textbf{0}\in \mathbb{K}_{I_3 \times I_4 \times \dots \times I_N}$ as a neutral element. On the other hand, we have 
				\begin{itemize}
					\item Let $\mathcal{A}, \, \mathcal{B}$ and $\mathcal{C}$ are scalar-tensors in $\mathbb{K}_{I_3 \times I_4 \times \dots \times I_N}$, then we will get 
					\begin{eqnarray*}
						\mathcal{A}*_\mathcal{L}\left(\mathcal{B}*_\mathcal{L}\mathcal{C}\right)&=&\mathcal{L}^{-1}\left(\mathcal{L}\left(\mathcal{A}\right)\triangle \mathcal{L}\left(\mathcal{L}^{-1}\left(\mathcal{L}\left(\mathcal{B}\right)\triangle \mathcal{L}\left(\mathcal{C}\right)\right)\right)\right)\\
						&=& \mathcal{L}^{-1}\left(\mathcal{L}\left(\mathcal{A}\right)\triangle \mathcal{L}\left(\mathcal{B}\right)\triangle \mathcal{L}\left(\mathcal{C}\right)\right)\\
						&=& \mathcal{L}^{-1}\left(\mathcal{L}\left(\mathcal{L}^{-1}\left(\mathcal{L}\left(\mathcal{A}\right)\triangle \mathcal{L}\left(\mathcal{B}\right)\right)\right)\triangle \mathcal{L}\left(\textbf{c}\right)\right)\\
						&=&\left(\mathcal{A} *_\mathcal{L}\mathcal{B}\right)*_\mathcal{L}\mathcal{C}.
					\end{eqnarray*}
					\item We also  have 
					\begin{eqnarray*}
						\mathcal{A}*_\mathcal{L}\left(\mathcal{B}+\mathcal{C}\right)&=&\mathcal{L}^{-1}\left(\mathcal{L}\left(\mathcal{A}\right)\triangle \mathcal{L}\left(\mathcal{B}+\mathcal{C}\right)\right)\\
						&=&\mathcal{L}^{-1}\left(\mathcal{L}\left(\mathcal{A}\right)\triangle\left( \mathcal{L}\left(\mathcal{B}\right)+\mathcal{L}\left(\mathcal{C}\right)\right)\right)\\
						&=&\mathcal{L}^{-1}\left(\mathcal{L}\left(\mathcal{A}\right)\triangle \mathcal{L}\left(\mathcal{B}\right)+\mathcal{L}\left(\mathcal{A}\right)\triangle\mathcal{L}\left(\mathcal{C}\right)\right)\\
						&=&\mathcal{L}^{-1}\left(\mathcal{L}\left(\mathcal{A}\right)\triangle \mathcal{L}\left(\mathcal{B}\right)\right)+\mathcal{L}^{-1}\left(\mathcal{L}\left(\mathcal{A}\right)\triangle\mathcal{L}\left(\mathcal{C}\right)\right)\\
						&=& \mathcal{A}*_\mathcal{L}\mathcal{B} +\mathcal{A}*_\mathcal{L}\mathcal{C}.
					\end{eqnarray*}
					\item Finally, the commutativity is also satisfied by 
					\begin{eqnarray*}
						\mathcal{A}*_\mathcal{L}\mathcal{B}=\mathcal{L}^{-1}\left(\mathcal{L}\left(\mathcal{A}\right)\triangle\mathcal{L}\left(\mathcal{B}\right)\right)&=&\mathcal{L}^{-1}\left(\mathcal{L}\left(\mathcal{B}\right)\triangle\mathcal{L}\left(\mathcal{A}\right)\right)\\
						&=& \mathcal{B}*_\mathcal{L}\mathcal{A}.
					\end{eqnarray*}
				\end{itemize}
			\end{proof}
			\begin{theorem}{(The tensor $*_\mathcal{L}$-SVD)}\\
				Let $\mathcal{A}\in \mathbb{K}^{I_1 \times I_2}_{I_3 \times \dots \times I_N}$ be an $N^{th}$-order tensor, then $\mathcal{A}$ can be decomposed  as 
				\begin{equation}\label{tsvd1}
				\mathcal{A}=\mathcal{U}*_{\mathcal{L}}\mathcal{S}*_{\mathcal{L}}\mathcal{V}^T=\sum_{k=1}^{r}\mathcal{U}(:,k,:,\dots,:)*_\mathcal{L}\mathcal{S}(k,k,:,\dots,:)*_\mathcal{L}\mathcal{V}(:,k,:,\dots,:)^T,
				\end{equation}
				where $\mathcal{U}\in \mathbb{K}^{I_1 \times I_1}_{I_3 \times \dots \times I_N}$ and $\mathcal{V}\in \mathbb{K}^{I_2 \times I_2}_{I_3 \times \dots \times I_N}$ are orthogonal tensors, $\mathcal{S}\in \mathbb{K}^{I_1 \times I_2}_{I_3 \times \dots \times I_N}$ is an f-diagonal and $r$ is the tubal rank of $\mathcal{A}$ which will be defined in the next.
			\end{theorem}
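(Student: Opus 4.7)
The plan is to prove the theorem by reducing to the matrix SVD in the transformed domain, exploiting the fact that the $*_\mathcal{L}$-product becomes a face-wise product after applying $\mathcal{L}$. Given $\mathcal{A} \in \mathbb{K}^{I_1 \times I_2}_{I_3 \times \dots \times I_N}$, I would first form $\hat{\mathcal{A}} = \mathcal{L}(\mathcal{A})$ and, for each $p \in \{1,2,\dots,P\}$ where $P = I_3 I_4 \cdots I_N$, compute the standard matrix SVD
\begin{equation*}
\hat{\mathcal{A}}^{\,p} = \hat{U}^{p} \hat{S}^{p} \bigl(\hat{V}^{p}\bigr)^{T},
\end{equation*}
where $\hat{U}^{p} \in \mathbb{K}^{I_1 \times I_1}$ and $\hat{V}^{p} \in \mathbb{K}^{I_2 \times I_2}$ are orthogonal and $\hat{S}^{p} \in \mathbb{K}^{I_1 \times I_2}$ is diagonal with nonnegative entries. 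Stacking these matrices as the representative matrices of three tensors $\hat{\mathcal{U}}, \hat{\mathcal{S}}, \hat{\mathcal{V}}$ in the transformed domain, I then set $\mathcal{U} = \mathcal{L}^{-1}(\hat{\mathcal{U}})$, $\mathcal{S} = \mathcal{L}^{-1}(\hat{\mathcal{S}})$, and $\mathcal{V} = \mathcal{L}^{-1}(\hat{\mathcal{V}})$.

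Next I would verify the three required properties. Since $\mathcal{L}(\mathcal{U})^{p} = \hat{U}^{p}$ is orthogonal for every $p$, the definition of orthogonal tensor gives $\mathcal{U} *_\mathcal{L} \mathcal{U}^{T} = \mathcal{U}^{T} *_\mathcal{L} \mathcal{U} = \mathcal{I}$, and similarly for $\mathcal{V}$. Because each $\hat{S}^{p}$ is diagonal, the tensor $\mathcal{S}$ is f-diagonal by definition. To check the factorization itself, I would compute
\begin{equation*}
\mathcal{U} *_\mathcal{L} \mathcal{S} *_\mathcal{L} \mathcal{V}^{T} = \mathcal{L}^{-1}\bigl(\mathcal{L}(\mathcal{U}) \triangle \mathcal{L}(\mathcal{S}) \triangle \mathcal{L}(\mathcal{V}^{T})\bigr),
\end{equation*}
which reduces face-wise to $\hat{U}^{p} \hat{S}^{p} (\hat{V}^{p})^{T} = \hat{\mathcal{A}}^{\,p}$, so that the full inverse transform recovers $\mathcal{A}$.

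For the sum representation, I would define the tubal rank $r$ as the largest integer such that $\hat{S}^{p}(k,k) \neq 0$ for some $p$ and $k \leq r$ (equivalently, the maximum over $p$ of $\mathrm{rank}(\hat{\mathcal{A}}^{\,p})$). Writing the face-wise matrix product as the outer-product sum $\hat{U}^{p} \hat{S}^{p} (\hat{V}^{p})^{T} = \sum_{k=1}^{r} \hat{U}^{p}(:,k)\hat{S}^{p}(k,k)\hat{V}^{p}(:,k)^{T}$ for each $p$, lifting this identity back through $\mathcal{L}^{-1}$, and using the linearity of $\mathcal{L}^{-1}$ together with the proposition that the $*_\mathcal{L}$-product acts column-wise on the second factor, yields the sum over $k$ of the lateral slices in the stated form.

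The main obstacle I anticipate is formally justifying the f-diagonal form of $\mathcal{S}$ and the rank/truncation consistency: one must ensure that the slice-wise matrix SVDs can be chosen coherently so that $\mathcal{S}$ really is f-diagonal (this is automatic from the construction, but should be stated carefully), and that the sum terminates at the tubal rank $r$ rather than at $\min(I_1,I_2)$. The rest is essentially routine translation between the spatial and transformed domains via the identities $\mathcal{L}(\mathcal{X} *_\mathcal{L} \mathcal{Y}) = \mathcal{L}(\mathcal{X}) \triangle \mathcal{L}(\mathcal{Y})$ and $\mathcal{L}(\mathcal{X}^{T})^{p} = (\mathcal{L}(\mathcal{X})^{p})^{T}$ established earlier.
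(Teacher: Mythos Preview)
Your proposal is correct and follows essentially the same approach as the paper: compute the matrix SVD of each representative matrix $\mathcal{L}(\mathcal{A})^{p}$, assemble the resulting factors into tensors in the transformed domain, and pull back through $\mathcal{L}^{-1}$ to obtain $\mathcal{U}$, $\mathcal{S}$, $\mathcal{V}$. In fact your write-up is more complete than the paper's, which does not explicitly verify orthogonality, f-diagonality, or the outer-product sum expansion.
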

			\begin{proof}
				For a $K^{th}$-order tensor $\mathcal{A}$ and $p=1,\, 2,\, \dots ,\, P$, consider SVD decomposition of the matrices 
				\begin{equation*}
				\mathcal{L}\left(\mathcal{A}\right)^p=U_\mathcal{L}^p S_\mathcal{L}^p \left(V_\mathcal{L}^p\right)^T.
				\end{equation*} 
				Then we obtain 
				\begin{equation*}
				\mathcal{L}\left(\mathcal{A}\right)=\mathcal{U}_\mathcal{L} \triangle \mathcal{S}_\mathcal{L} \triangle \left(\mathcal{V}_\mathcal{L}\right)^T,
				\end{equation*}
				with  $\mathcal{L}\left(\mathcal{U}\right)=\mathcal{U}_\mathcal{L}$, $\mathcal{L}\left(\mathcal{S}\right)=\mathcal{S}_\mathcal{L}$ and $\mathcal{L}\left(\mathcal{V}\right)=\mathcal{V}_\mathcal{L}$, where those tensors are well defined since the operator $\mathcal{L}$ is inverstible. Therefore
				\begin{equation*}
				\mathcal{L}\left(\mathcal{A}\right)= \mathcal{L}\left(\mathcal{U}\right)  \triangle
				\mathcal{L}\left(\mathcal{S}\right)  \triangle \mathcal{L}\left(\mathcal{V}\right)^T \Longleftrightarrow  \mathcal{A}= \mathcal{L}^{-1}\left(\mathcal{L}\left(\mathcal{U}\right)  \triangle
				\mathcal{L}\left(\mathcal{S}\right)  \triangle \mathcal{L}\left(\mathcal{V}\right)^T\right)
				\end{equation*}
				which gives
				\begin{equation*}
				\mathcal{A}=\mathcal{U}*_\mathcal{L} \mathcal{S}*_\mathcal{L} \mathcal{V}^T.
				\end{equation*}
			\end{proof}
		
			The following algorithm summarises the different steps for computing the tensor $*_\mathcal{L}$-SVD of an $N^{th}$-order tensor.
			\begin{algorithm}[H]
				\caption{The $*_\mathcal{L}$-svd.}
				\label{alg:3}
				\begin{algorithmic}[1]
					\STATE \textbf{Inputs:} $\mathcal{A}\in \mathbb{K}^{I_1\times I_2}_{I_3\times \dots \times I_N}$. 
					\STATE \textbf{Output:} $\mathcal{U}\in \mathbb{K}^{I_1\times I_1}_{I_3\times \dots \times I_N}$,  $\mathcal{S}\in \mathbb{K}^{I_1\times I_2}_{I_3\times \dots \times I_N}$, $\mathcal{V}\in \mathbb{K}^{I_2\times I_2}_{I_3\times \dots \times I_N}$.
					\STATE \textbf{Compute} $\hat{\mathcal{A}}=\mathcal{L}\left(\mathcal{A}\right)$.
					\FOR {$i=1, \ldots ,P$}
					\STATE  $\left[\hat{\mathcal{U}}^{i}, \hat{\mathcal{S}}^{i}, \hat{\mathcal{V}}^{i} \right]=svd(\hat{\mathcal{A}}^{i})$
					\ENDFOR
					\STATE $\mathcal{U}=\mathcal{L}^{-1}\left(\hat{\mathcal{U}}\right)$, $\mathcal{S}=\mathcal{L}^{-1}\left(\hat{\mathcal{S}}\right)$ and $\mathcal{V}=\mathcal{L}^{-1}\left(\hat{\mathcal{V}}\right)$
				\end{algorithmic}
			\end{algorithm}
			\begin{coro}
				Let $\mathcal{A}$ be  an $N^{th}$-order tensor in $\mathbb{K}^{I_1\times I_2}_{I_3\times \dots \times I_N}$,  and let $\mathcal{S}_i=\mathcal{S}(i,i,:,\dots,:)$, where $\mathcal{S}$ is given by \eqref{tsvd1}. Then  
				\begin{equation}
				\left\Vert \mathcal{A}\right\Vert_F^2=\left\Vert \mathcal{S}\right\Vert_F^2=\sum_{i=1}^{\min(I_1,\, I_2)} \left\Vert \mathcal{S}_i\right\Vert_F^2.
				\end{equation}
				Furthermore
				\begin{equation}
				\left\Vert \mathcal{S}_1 \right\Vert_F\geq \left\Vert \mathcal{S}_2 \right\Vert_F \geq \dots \geq 0.
				\end{equation}
			\end{coro}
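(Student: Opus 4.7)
The plan is to push everything through the $\mathcal{L}$-transform, where the $*_\mathcal{L}$-SVD reduces to classical SVDs of the representative matrices, and then rely on the isometric relation $\|\mathcal{A}\|_F^2 = \tfrac{1}{\alpha}\|\mathcal{L}(\mathcal{A})\|_F^2$ proved earlier. First I would write
\begin{equation*}
\|\mathcal{A}\|_F^2 = \tfrac{1}{\alpha}\|\mathcal{L}(\mathcal{A})\|_F^2 = \tfrac{1}{\alpha}\sum_{p=1}^{P}\|\mathcal{L}(\mathcal{A})^p\|_F^2
\end{equation*}
and then insert the matrix SVDs $\mathcal{L}(\mathcal{A})^p = U_\mathcal{L}^p S_\mathcal{L}^p (V_\mathcal{L}^p)^T$ used in the proof of the $*_\mathcal{L}$-SVD. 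By unitary invariance of the matrix Frobenius norm each $\|\mathcal{L}(\mathcal{A})^p\|_F^2 = \|S_\mathcal{L}^p\|_F^2$, so $\|\mathcal{A}\|_F^2 = \tfrac{1}{\alpha}\sum_p \|S_\mathcal{L}^p\|_F^2 = \tfrac{1}{\alpha}\|\mathcal{L}(\mathcal{S})\|_F^2 = \|\mathcal{S}\|_F^2$, which gives the first equality.

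Next I would decompose $\|\mathcal{S}\|_F^2$ into the diagonal contributions. The key observation is that $\mathcal{L}$ only acts on modes $3,\dots,N$, so it commutes with extracting the $(i,i)$ scalar-tensor: $\mathcal{L}(\mathcal{S}_i) = \mathcal{L}(\mathcal{S})(i,i,:,\dots,:)$. Since $\mathcal{S}$ is f-diagonal, each $\mathcal{L}(\mathcal{S})^p$ is a diagonal matrix whose nonzero entries are exactly the singular values $\sigma_i^p$ of $\mathcal{L}(\mathcal{A})^p$, so
\begin{equation*}
\|\mathcal{L}(\mathcal{S})\|_F^2 = \sum_{p=1}^{P}\sum_{i=1}^{\min(I_1,I_2)}(\sigma_i^p)^2 = \sum_{i=1}^{\min(I_1,I_2)}\|\mathcal{L}(\mathcal{S})(i,i,:,\dots,:)\|_F^2 = \sum_{i=1}^{\min(I_1,I_2)}\|\mathcal{L}(\mathcal{S}_i)\|_F^2.
\end{equation*}
Dividing by $\alpha$ and applying the isometry again yields $\|\mathcal{S}\|_F^2 = \sum_i \|\mathcal{S}_i\|_F^2$, completing the second equality.

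Finally, the monotonicity $\|\mathcal{S}_1\|_F \geq \|\mathcal{S}_2\|_F \geq \cdots \geq 0$ follows fiber-by-fiber: for every $p$ the classical SVD orders the singular values $\sigma_1^p \geq \sigma_2^p \geq \cdots \geq 0$, hence $(\sigma_i^p)^2 \geq (\sigma_{i+1}^p)^2$, and summing over $p$ gives
\begin{equation*}
\|\mathcal{S}_i\|_F^2 = \tfrac{1}{\alpha}\sum_{p=1}^{P}(\sigma_i^p)^2 \geq \tfrac{1}{\alpha}\sum_{p=1}^{P}(\sigma_{i+1}^p)^2 = \|\mathcal{S}_{i+1}\|_F^2.
\end{equation*}
The only point requiring some care is the commutation $\mathcal{L}(\mathcal{S}_i) = \mathcal{L}(\mathcal{S})(i,i,:,\dots,:)$, which is really the main small obstacle; everything else is a bookkeeping consequence of reducing to the $P$ parallel matrix SVDs delivered by Algorithm \ref{alg:3}.
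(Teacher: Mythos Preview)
Your proof is correct and follows essentially the same route as the paper: reduce to the $P$ parallel matrix SVDs in the transform domain, use the isometry $\|\cdot\|_F^2=\tfrac{1}{\alpha}\|\mathcal{L}(\cdot)\|_F^2$, and read off both the decomposition $\sum_i\|\mathcal{S}_i\|_F^2$ and the monotonicity from the ordering of the matrix singular values. The only cosmetic difference is that the paper obtains $\|\mathcal{A}\|_F=\|\mathcal{S}\|_F$ by invoking the previously established invariance of $\|\cdot\|_F$ under multiplication by $*_\mathcal{L}$-orthogonal tensors, whereas you re-derive this by unitary invariance at the matrix level; the remaining steps, including the implicit use of $\mathcal{L}(\mathcal{S}_i)=\mathcal{L}(\mathcal{S})(i,i,:,\dots,:)$, are identical.
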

			\begin{proof}
				Since the tensors $\mathcal{U}$ and $\mathcal{V}$ given from the $*_\mathcal{L}$-svd are orthogonal, then  
				\begin{eqnarray*}
					\left\Vert \mathcal{A}\right\Vert_F^2 =\left\Vert \mathcal{U}*_\mathcal{L}  \mathcal{S}*_\mathcal{L}\mathcal{V}^T\right\Vert_F^2&=&\left\Vert \mathcal{S}\right\Vert_F^2 \\
					&=& \dfrac{1}{\alpha}\sum_{p=1}^{P}\left\Vert \mathcal{L}\left(\mathcal{S}\right)^p\right\Vert_F^2 \\
					&=& \dfrac{1}{\alpha}\sum_{p=1}^{P} \sum_{i=1}^{\min(I_1,\, I_2)}\left( \mathcal{L}\left(\mathcal{S}\right)^p(i,i)\right)^2 \\
					&=&  \sum_{i=1}^{\min(I_1,\, I_2)}\left\Vert \mathcal{S}_i \right\Vert_F^2.
				\end{eqnarray*}
			On the other hand,
				\begin{eqnarray*}
					\left\Vert \mathcal{S}_i \right\Vert_F^2=\dfrac{1}{\alpha}\left\Vert \mathcal{L}\left(\mathcal{S}\right)_i \right\Vert_F^2=\dfrac{1}{\alpha}\sum_{p=1}^{P} \left( \mathcal{L}\left(\mathcal{S}\right)^p(i,i)\right)^2 \geq \dfrac{1}{\alpha}\sum_{p=1}^{P}\left( \mathcal{L}\left(\mathcal{S}\right)^p(i+1,i+1)\right)^2=\left\Vert \mathcal{S}_{i+1}\right\Vert_F^2.
				\end{eqnarray*}
			\end{proof}
		\medskip
		
	\noindent 	Next, we give different definitions of a rank of a high-order tensor.
		\begin{definition}{(The $*_\mathcal{L}$-tubal rank)}\\
			Let $\mathcal{A}\in \mathbb{K}^{I_1 \times I_2}_{I_3 \times \dots \times I_N}$,  then  the tensor $*_\mathcal{L}$-tubal rank is defined as 
			\begin{equation}
				{\tt rank}_t(\mathcal{A})={\tt card}\{i/ \; \mathcal{S}(i,i,:,\dots,:)\neq \textbf{0} \},
			\end{equation}
			where $\mathcal{S}$ is the $f$-diagonal tensor given from the $*_\mathcal{L}$-svd of $\mathcal{A}$ \eqref{tsvd1}.
		\end{definition}
	\medskip
		\begin{lemm}
			For a $K^{th}$-order tensor,  the $*_\mathcal{L}$-tubal rank can be written as
			\begin{equation}
				{\tt rank}_t\left(\mathcal{A}\right)={\tt card}\left\{i/ \exists p \in \{1,\, 2,\, \dots ,\, P\}; \; \mathcal{L}\left(\mathcal{S}\right)^p(i,i)\neq 0 \right\}.
			\end{equation}
		%	with $\mathcal{S}=\mathcal{U}^**_\mathcal{L}\mathcal{A}*_\mathcal{L}\mathcal{V}.$
		\end{lemm}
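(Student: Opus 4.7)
The plan is to translate the definition of the tubal rank, which is framed in terms of the spatial-slice scalar-tensors $\mathcal{S}(i,i,:,\dots,:)$, into a statement about the transform-domain representative matrices $\mathcal{L}(\mathcal{S})^p$. The key structural fact I will use is that the operator $\mathcal{L}$ acts only on the modes $3, 4, \dots, N$ (via $n$-mode products with the matrices $M_{I_3}, \dots, M_{I_N}$) and therefore commutes with the operation of extracting the $(i,i)$ spatial slice. In symbols,
\begin{equation*}
\mathcal{L}\bigl(\mathcal{S}(i,i,:,\dots,:)\bigr) \;=\; \mathcal{L}(\mathcal{S})(i,i,:,\dots,:),
\end{equation*}
since the $n$-mode products with respect to modes $3,\dots,N$ leave the first two indices untouched.

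With this in place, the argument proceeds in three short steps. First, since $\mathcal{L}$ is invertible as an operator on scalar-tensors (being a composition of $n$-mode products by invertible matrices $M_{I_k}$), a scalar-tensor $\mathcal{T}$ is zero if and only if $\mathcal{L}(\mathcal{T})$ is zero. Applying this to $\mathcal{T}=\mathcal{S}(i,i,:,\dots,:)$ and combining with the commutation identity above yields
\begin{equation*}
\mathcal{S}(i,i,:,\dots,:)\neq \mathbf{0}
\;\Longleftrightarrow\;
\mathcal{L}(\mathcal{S})(i,i,:,\dots,:)\neq \mathbf{0}.
\end{equation*}
Second, by the definition of the representative matrices in \eqref{eq 3.5}, the scalar entries of the scalar-tensor $\mathcal{L}(\mathcal{S})(i,i,:,\dots,:)$ are exactly the numbers $\mathcal{L}(\mathcal{S})^p(i,i)$ for $p=1,2,\dots,P$. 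Hence $\mathcal{L}(\mathcal{S})(i,i,:,\dots,:)\neq \mathbf{0}$ if and only if $\mathcal{L}(\mathcal{S})^p(i,i)\neq 0$ for some $p\in\{1,\dots,P\}$.

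Third, I chain these two equivalences together: the index set appearing in the definition of ${\tt rank}_t(\mathcal{A})$ coincides set-theoretically with $\{i\;:\;\exists p,\;\mathcal{L}(\mathcal{S})^p(i,i)\neq 0\}$, and taking cardinalities gives the stated formula. There is no substantial obstacle here; the only care needed is the bookkeeping in the first step, namely verifying that applying $\mathcal{L}$ and then slicing at $(i,i)$ produces the same scalar-tensor as slicing first and then applying the restriction of $\mathcal{L}$ to the scalar-tensor space. This follows directly from the commutativity property $\mathcal{X}\times_n U \times_m V = \mathcal{X}\times_m V \times_n U$ recorded earlier, combined with the observation that the slice $(i,i,:,\dots,:)$ does not interact with any of the modes along which $\mathcal{L}$ operates.
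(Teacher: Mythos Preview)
Your proof is correct and follows essentially the same approach as the paper: both rest on the equivalence $\mathcal{S}(i,i,:,\dots,:)\neq\mathbf{0}\Longleftrightarrow \exists\,p,\;\mathcal{L}(\mathcal{S})^p(i,i)\neq 0$, then take cardinalities. If anything, your version is more complete, since you explicitly justify the equivalence via the commutation of $\mathcal{L}$ with spatial slicing and the invertibility of $\mathcal{L}$, whereas the paper simply asserts it.
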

		\begin{proof}
			The key idea for proving the above result is the following equivalence,  
			\begin{equation*}
				\mathcal{S}\left(i,i,:,\dots,:\right)=\textbf{0} \Longleftrightarrow \forall p \in \{1,2,\dots, P\}; \; \mathcal{L}\left(\mathcal{S}\right)^p(i,i)=0,\, i =1,2, \dots , \min(I_1,I_2),
			\end{equation*}
		which is equivalent to 
%			which is given from the fact that te operator $\mathcal{L}$ is linear invertible, by using the negation of the above equivalence we will get that for an $ i \in \{ 1,2, \dots , min(I_1,I_2) \}$
			\begin{equation*}
				\mathcal{S}\left(i,i,:,\dots,:\right)\neq \textbf{0} \Longleftrightarrow \exists p \in \{1,2,\dots, P\}; \; \mathcal{L}\left(\mathcal{S}\right)^p(i,i)\neq 0, \; ,\, i =1,2, \dots , \min(I_1,I_2).
			\end{equation*}
	Therefore, 
		\begin{eqnarray}
			\mathcal{S}\left(i,i,:,\dots,:\right)\neq \textbf{0} \Longleftrightarrow \underset{1\leq p\leq P}{\max}\, \mathcal{L}\left(\mathcal{S}\right)^p(i,i)\neq 0, \, i =1,2, \dots , \min(I_1,I_2).
		\end{eqnarray}
		\end{proof}
		%Which can also gives that
		%\begin{eqnarray}
		%{\tt rank}_t\left(\mathcal{A}\right)=card\{i/ \; \underset{1\leq p\leq P}{\max}\, \mathcal{L}\left(\mathcal{S}\right)^p(i,i)\neq 0\}=card\{i/ \; \underset{1\leq p\leq P}{\max}\, \mathcal{L}\left(\mathcal{A}\right)^p(i,i)\neq 0\}.
		%\end{eqnarray}
		\begin{definition}{(Multirank and average rank)}\\
			For an $N^{th}$-order tensor $\mathcal{A}$ of size $I_1 \times I_2 \times \dots \times I_N$, its multirank under the $*_\mathcal{L}$-product is defined as the vector $\rho$ of size $P$, where its $i^{th}$ element is the rank of $\mathcal{L}\left(\mathcal{A}\right)^i$, i.e.,
			\begin{equation}
				\rho_i= {\tt rank}(\mathcal{L}\left(\mathcal{A}\right)^i).
			\end{equation}
			The average rank of $\mathcal{A}$ is defined as the mean of the vector $\rho$, i.e., 
			\begin{equation}
				{\tt rank}_a\left(\mathcal{A}\right)= \dfrac{\displaystyle \sum_{i=1}^{P}\rho_i}{P}.
			\end{equation}
		\end{definition}
		\begin{remark}
		We notice that the average rank of an $N^{th}$-order tensor is defined as the rank of the block-diagonal matrix of $\mathcal{L}\left(\mathcal{A}\right)$ divided by $P$, 
			\begin{equation}
				{\tt rank}_a(\mathcal{A})=\dfrac{{\tt bdiag}(\mathcal{L}\left(\mathcal{A}\right))}{P}.
			\end{equation}
		\end{remark}
%		Eckart Young plays an important role in many domains, and
%			the main advantage of the $*_\mathcal{L}$-product, is that we can extend this theorem to third-order tensors, thus the following theorem presents a generalization for $N^{th}$-order tensors, by using the $*_\mathcal{L}$-product presented previously.

			Next, we give a generalized version of the well known Eckart Young using the $*_\mathcal{L}$-product .
			\begin{theorem}{(Eckart Young)}\\
				Let $\mathcal{A}\in \mathbb{K}^{I_1 \times I_2}_{I_3 \times \dots \times I_N}$ and $\mathcal{A}_k=\mathcal{U}(:,1:k,:,\dots ,:)*_\mathcal{L}\mathcal{S}(1:k,1:k,:,\dots,:)*_\mathcal{L}\mathcal{V}(1:k,:,\dots ,:)^T$, where $k$ is a nonzero  positive integer. Let $\mathcal{C}$  be the set befined by  $$\mathcal{C}=\left\{\mathcal{X}*_\mathcal{L}\mathcal{Y} \; / \mathcal{X} \in \mathbb{K}^{I_1 \times k}_{I_3 \times \dots \times I_N}, \; \mathcal{Y} \in \mathbb{K}^{k \times I_2}_{I_3 \times \dots \times I_N} \right\}.$$ 
				%which is the set all t-rank $k$ tensors of the same size as $\mathcal{A}$, is $\mathcal{A}_k$.
				 Then the tensor $\mathcal{A}_k$ solves the minimisation problem
				$$\displaystyle \min_{\mathcal{X} \in \mathtt{C}}  \Vert \mathcal{A} -\mathcal{X} \Vert_F,$$
			and the error-norm is given by $$\left\Vert \mathcal{A}-\mathcal{A}_k \right\Vert_F^2=\sum_{i=k+1}^{r}\left\Vert \mathcal{S}_i \right\Vert_F^2.$$
			\end{theorem}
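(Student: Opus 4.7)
The plan is to reduce the problem to the classical matrix Eckart--Young theorem applied slice-by-slice in the transformed domain, using the facts already established earlier in the paper: that $\mathcal{L}$ is invertible, that $\|\mathcal{A}\|_F^2 = \tfrac{1}{\alpha}\|\mathcal{L}(\mathcal{A})\|_F^2 = \tfrac{1}{\alpha}\sum_{p=1}^P \|\mathcal{L}(\mathcal{A})^p\|_F^2$, and that the face-wise product diagonalizes the $*_\mathcal{L}$-product slice-by-slice.

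First I would rewrite the objective in the transform domain. For any $\mathcal{X} \in \mathcal{C}$, the Parseval-type identity gives
\begin{equation*}
\|\mathcal{A}-\mathcal{X}\|_F^2 = \frac{1}{\alpha}\|\mathcal{L}(\mathcal{A})-\mathcal{L}(\mathcal{X})\|_F^2 = \frac{1}{\alpha}\sum_{p=1}^P \|\mathcal{L}(\mathcal{A})^p - \mathcal{L}(\mathcal{X})^p\|_F^2,
\end{equation*}
so minimizing the left-hand side over $\mathcal{C}$ amounts to minimizing each summand independently, provided the constraint set decouples across $p$.

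The second step is to check that the constraint set does decouple. If $\mathcal{X} = \mathcal{X}_1 *_\mathcal{L} \mathcal{Y}_1$ with $\mathcal{X}_1 \in \mathbb{K}^{I_1 \times k}_{I_3 \times \dots \times I_N}$ and $\mathcal{Y}_1 \in \mathbb{K}^{k \times I_2}_{I_3 \times \dots \times I_N}$, then by the definition of the face-wise product one has $\mathcal{L}(\mathcal{X})^p = \mathcal{L}(\mathcal{X}_1)^p\, \mathcal{L}(\mathcal{Y}_1)^p$, which is a product of an $I_1 \times k$ matrix and a $k \times I_2$ matrix, hence has rank at most $k$. Conversely any choice of rank-$\leq k$ matrices $M_p$ for $p=1,\ldots,P$ can be realized: write each $M_p = X_p Y_p$ with $X_p \in \mathbb{K}^{I_1 \times k}$, $Y_p \in \mathbb{K}^{k \times I_2}$, assemble these into $\widehat{\mathcal{X}}_1, \widehat{\mathcal{Y}}_1$, and set $\mathcal{X}_1 = \mathcal{L}^{-1}(\widehat{\mathcal{X}}_1)$, $\mathcal{Y}_1 = \mathcal{L}^{-1}(\widehat{\mathcal{Y}}_1)$. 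Thus $\mathcal{X}$ ranges over $\mathcal{C}$ exactly when each $\mathcal{L}(\mathcal{X})^p$ ranges independently over matrices of rank $\leq k$.

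Third, apply classical matrix Eckart--Young to each frontal slice of $\mathcal{L}(\mathcal{A})$. If $\mathcal{L}(\mathcal{A})^p = U^p\,\Sigma^p (V^p)^T$ is the matrix SVD used in Algorithm~3, then the best rank-$k$ approximant of $\mathcal{L}(\mathcal{A})^p$ in Frobenius norm is its truncation $U^p_k \Sigma^p_k (V^p_k)^T$, with squared error $\sum_{i=k+1}^{\min(I_1,I_2)} (\sigma^p_i)^2$. Taking the optimum slice-by-slice, the optimal $\mathcal{X}^\star$ satisfies $\mathcal{L}(\mathcal{X}^\star)^p = U^p_k \Sigma^p_k (V^p_k)^T$, which is exactly the $p$-th slice of $\mathcal{L}(\mathcal{U}(:,1{:}k,\ldots))\triangle\mathcal{L}(\mathcal{S}(1{:}k,1{:}k,\ldots))\triangle\mathcal{L}(\mathcal{V}(:,1{:}k,\ldots))^T$. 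Applying $\mathcal{L}^{-1}$ and using the factorization structure of the $*_\mathcal{L}$-product, this pulls back to precisely $\mathcal{A}_k$.

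Finally, the error formula drops out by summing the per-slice errors and invoking the identity proved in the preceding corollary:
\begin{equation*}
\|\mathcal{A}-\mathcal{A}_k\|_F^2 = \frac{1}{\alpha}\sum_{p=1}^P \sum_{i=k+1}^{r} \bigl(\mathcal{L}(\mathcal{S})^p(i,i)\bigr)^2 = \sum_{i=k+1}^{r} \|\mathcal{S}_i\|_F^2.
\end{equation*}
The one step that deserves care, and which I view as the main obstacle, is the converse direction of the decoupling: verifying that every slice-wise rank-$\leq k$ configuration can be realized by a single pair $(\mathcal{X}_1,\mathcal{Y}_1)$ in $\mathcal{C}$ after pulling back through $\mathcal{L}^{-1}$. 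Once this is established, the rest is a clean reduction to the matrix Eckart--Young theorem applied $P$ times in parallel.
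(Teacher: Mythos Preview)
Your proposal is correct and follows essentially the same approach as the paper: pass to the transform domain, decouple the objective into $P$ independent matrix problems, and apply the classical Eckart--Young theorem slice-by-slice. In fact your treatment is more complete than the paper's: the paper simply notes that $\mathcal{L}(\mathcal{B})^i=\mathcal{L}(\mathcal{X})^i\mathcal{L}(\mathcal{Y})^i$ and invokes matrix Eckart--Young, whereas you explicitly verify the converse direction of the decoupling (that every slice-wise rank-$\leq k$ configuration is attainable in $\mathcal{C}$ via $\mathcal{L}^{-1}$) and derive the error formula, both of which the paper omits.
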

			\begin{proof}
				Let $\mathcal{B}=\mathcal{X}*_\mathcal{L}\mathcal{Y}\in \mathcal{C}$, then  we have
				$$\left\Vert \mathcal{A}-\mathcal{B}\right\Vert_F^2=\dfrac{1}{\alpha}\sum_{i=1}^{P} \left\Vert \mathcal{L}\left(\mathcal{A}\right)^i -\mathcal{L}\left(\mathcal{B}\right)^i \right\Vert_F^2.$$
				Now, since $\mathcal{L}\left(\mathcal{B}\right)^i=\mathcal{L}\left(\mathcal{X}\right)^i\mathcal{L}\left(\mathcal{Y}\right)^i$ and by using the  matrix  Eckart Young theorem  we obtain the result showing that  the $k$-best approximation  of the matrix $\mathcal{L}\left(\mathcal{B}\right)^i$ is given by  $\mathcal{L}\left(\mathcal{U}\right)^i(:,1:k)\mathcal{L}\left(\mathcal{S}\right)^i(1:k, 1:k)\mathcal{L}\left(\mathcal{V}\right)^i(1:k,:)^T$, where $\mathcal{U}$, $\mathcal{S}$ and $\mathcal{V}$ are given by \eqref{tsvd1}.
			\end{proof}
			\medskip
			\begin{prop}
				The tensor spectral norm of an $N^{th}$-order tensor $\mathcal{A}$ satisfies the following equation
				\begin{equation}
				\left\Vert \mathcal{A}\right\Vert=\left\Vert {\tt bdiag}\left(\mathcal{L}\left(\mathcal{A}\right)\right)\right\Vert
				\end{equation}
			\end{prop}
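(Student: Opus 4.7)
The plan is to interpret the tensor spectral norm as the operator norm induced by the Frobenius norm under the $*_\mathcal{L}$-product, that is, $\left\Vert \mathcal{A} \right\Vert = \sup_{\mathcal{X} \neq 0} \|\mathcal{A} *_\mathcal{L} \mathcal{X}\|_F / \|\mathcal{X}\|_F$, and then transport the computation into the transform domain where everything becomes block-diagonal. In that domain the supremum decouples across blocks, and the usual matrix spectral norm identity for block-diagonal matrices will finish the argument.

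First, I would use the face-wise formulation $\mathcal{A}*_\mathcal{L}\mathcal{X}=\mathcal{L}^{-1}(\mathcal{L}(\mathcal{A}) \triangle \mathcal{L}(\mathcal{X}))$ together with the Parseval-type identity $\|\mathcal{Y}\|_F^2 = \frac{1}{\alpha}\|\mathcal{L}(\mathcal{Y})\|_F^2$ already stated in the paper. This yields
\begin{equation*}
\frac{\|\mathcal{A}*_\mathcal{L}\mathcal{X}\|_F^2}{\|\mathcal{X}\|_F^2} = \frac{\sum_{p=1}^{P}\|\mathcal{L}(\mathcal{A})^p \, \mathcal{L}(\mathcal{X})^p\|_F^2}{\sum_{p=1}^{P}\|\mathcal{L}(\mathcal{X})^p\|_F^2}.
\end{equation*}
Because $\mathcal{L}$ is a bijection on $\mathbb{K}^{I_1 \times I_2}_{I_3 \times \dots \times I_N}$, taking the supremum over $\mathcal{X}$ is equivalent to taking the supremum over arbitrary matrix tuples $(Y^1,\ldots,Y^P)$ with $Y^p = \mathcal{L}(\mathcal{X})^p$.

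Next, I would argue the decoupling. The denominator is the sum of the $\|Y^p\|_F^2$, and the numerator is the sum of $\|\mathcal{L}(\mathcal{A})^p Y^p\|_F^2$; each summand in the numerator is bounded by $\|\mathcal{L}(\mathcal{A})^p\|_2^2 \|Y^p\|_F^2$. Consequently the ratio is at most $\max_p \|\mathcal{L}(\mathcal{A})^p\|_2^2$. The upper bound is attained by letting $p^\star$ be an index realizing that maximum and choosing $Y^{p^\star}$ to be a corresponding top right-singular vector (padded to a matrix of the right shape) and $Y^p = 0$ for $p \neq p^\star$. This gives $\|\mathcal{A}\|^2 = \max_p \|\mathcal{L}(\mathcal{A})^p\|_2^2$.

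Finally, I would invoke the well-known matrix identity $\|{\tt bdiag}(\mathcal{L}(\mathcal{A}))\|_2 = \max_p \|\mathcal{L}(\mathcal{A})^p\|_2$, which follows directly from the block-diagonal structure of ${\tt bdiag}(\mathcal{L}(\mathcal{A}))$ given in \eqref{bd}, to conclude $\|\mathcal{A}\| = \|{\tt bdiag}(\mathcal{L}(\mathcal{A}))\|$. The main subtlety is not algebraic but notational: one must be careful that the supremum is genuinely over all matrix tuples (since $\mathcal{L}$ is surjective), so that restricting to the single block $p^\star$ is admissible. Everything else reduces to the block-diagonal operator-norm identity and the face-wise diagonalization already established earlier in this section.
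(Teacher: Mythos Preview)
Your proposal is correct and follows essentially the same route as the paper: define the spectral norm as a supremum of $\|\mathcal{A}*_\mathcal{L}\mathcal{X}\|_F/\|\mathcal{X}\|_F$, pass to the transform domain via the Parseval identity, and reduce to the block-diagonal spectral-norm identity. The paper restricts the supremum to tensor-vectors $\mathcal{V}\in\mathbb{K}^{I_2\times 1}_{I_3\times\dots\times I_N}$ and then jumps directly from $\|{\tt bdiag}(\mathcal{L}(\mathcal{A}))\,{\tt bdiag}(\mathcal{L}(\mathcal{V}))\|_F$ to $\|{\tt bdiag}(\mathcal{L}(\mathcal{A}))\|$ without further comment, whereas you make the decoupling explicit and verify attainment by concentrating on a single block; your version is the more complete of the two.
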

			\begin{proof}
				The spectral norm of a tensor $\mathcal{A}$, is defined by 
				\begin{eqnarray}
				\left\Vert \mathcal{A} \right\Vert=\underset{\underset{\left\Vert \mathcal{V}\right\Vert_F=1}{\mathcal{V}\in \mathbb{K}^{I_2 \times 1}_{I_3 \times \dots \times I_N}}}{sup} \left\Vert \mathcal{A}*_\mathcal{L}\mathcal{V}\right\Vert_F&=&\dfrac{1}{\sqrt{\alpha}}\underset{\underset{\left\Vert \mathcal{V}\right\Vert_F=1}{\mathcal{V}\in \mathbb{K}^{I_2 \times 1}_{I_3 \times \dots \times I_N}}}{sup} \left\Vert \mathcal{L}(\mathcal{A})\triangle \mathcal{L}(\mathcal{V})\right\Vert_F \\
				&=& \dfrac{1}{\sqrt{\alpha}}\underset{\underset{\left\Vert \mathcal{V}\right\Vert_F=1}{\mathcal{V}\in \mathbb{K}^{I_2 \times 1}_{I_3 \times \dots \times I_N}}}{sup} \left\Vert {\tt bdiag}\left(\mathcal{L}(\mathcal{A})\right) {\tt bdiag}\left(\mathcal{L}(\mathcal{V})\right)\right\Vert_F \\
				&=& \left\Vert {\tt bdiag}\left(\mathcal{L}(\mathcal{A})\right)\right\Vert
				\end{eqnarray}
			\end{proof}
			\begin{definition}{Tensor nuclear norm}\\
				The tensor nuclear norm of a tensor $\mathcal{A}\in \mathbb{K}^{I_1 \times I_2}_{I_3 \times \dots \times I_N}$ is defined as its dual norm, i.e.,
				\begin{equation}\label{eq 23}
				\left\Vert \mathcal{A}\right\Vert_*=\underset{\left\Vert\mathcal{B}\right\Vert\leq 1}{sup}\, \left\vert\left<\mathcal{A},\mathcal{B}\right>\right\vert
				\end{equation}
			\end{definition}
			\begin{theorem}\label{theo 21}
				The nuclear norm of a tensor $\mathcal{A}\in  \mathbb{K}^{I_1 \times I_2}_{I_3 \times \dots \times I_N}$ verifies the relation 
				\begin{equation} \label{eq 24}
				\left\Vert \mathcal{A}\right\Vert_*=\dfrac{1}{\alpha}\left\Vert {\tt bdiag}(\mathcal{L}\left(\mathcal{A}\right))\right\Vert_*=\dfrac{1}{\alpha}\left\Vert {\tt bdiag}(\mathcal{L}\left(\mathcal{S}\right))\right\Vert_*.
				\end{equation}
			\end{theorem}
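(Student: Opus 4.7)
The plan is to unfold the dual-norm definition \eqref{eq 23} into the transformed domain, identify the resulting supremum as an ordinary matrix nuclear norm, and finally replace $\mathcal{L}(\mathcal{A})$ by $\mathcal{L}(\mathcal{S})$ using the $*_\mathcal{L}$-SVD.

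First, using the Parseval-type identity $\langle\mathcal{A},\mathcal{B}\rangle=\frac{1}{\alpha}\langle\mathcal{L}(\mathcal{A}),\mathcal{L}(\mathcal{B})\rangle$ together with the fact that the Frobenius inner product of two tensors equals the Frobenius inner product of their block-diagonal embeddings (since ${\tt bdiag}$ just rearranges the $P$ representative matrices onto a common diagonal), we can write
$$\langle\mathcal{A},\mathcal{B}\rangle=\frac{1}{\alpha}\bigl\langle{\tt bdiag}(\mathcal{L}(\mathcal{A})),\,{\tt bdiag}(\mathcal{L}(\mathcal{B}))\bigr\rangle.$$
Combined with the already-proved identity $\|\mathcal{B}\|=\|{\tt bdiag}(\mathcal{L}(\mathcal{B}))\|$, the definition \eqref{eq 23} becomes
$$\|\mathcal{A}\|_*=\frac{1}{\alpha}\sup_{\substack{M\text{ block-diagonal}\\ \|M\|\le 1}}\bigl|\langle{\tt bdiag}(\mathcal{L}(\mathcal{A})),M\rangle\bigr|,$$
where I have used that $\mathcal{B}\mapsto{\tt bdiag}(\mathcal{L}(\mathcal{B}))$ is a bijection between tensors of the prescribed size and block-diagonal matrices with $P$ blocks of size $I_1\times I_2$.

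Next I would argue that this restricted supremum coincides with the full matrix nuclear norm $\|{\tt bdiag}(\mathcal{L}(\mathcal{A}))\|_*$. One inclusion is immediate because block-diagonal matrices form a subset of all matrices, giving $\le$. For the reverse direction, given any matrix $N$ with $\|N\|\le 1$, the block-diagonal part ${\tt bdiag}(N)$ (obtained by zeroing off-diagonal blocks) satisfies $\|{\tt bdiag}(N)\|\le\|N\|\le 1$ since each diagonal block is a submatrix of $N$ and hence has spectral norm at most $\|N\|$; moreover the off-diagonal part contributes zero to the inner product with ${\tt bdiag}(\mathcal{L}(\mathcal{A}))$, so replacing $N$ by ${\tt bdiag}(N)$ preserves the inner product while keeping the constraint satisfied. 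This yields $\ge$, and therefore the first equality
$$\|\mathcal{A}\|_*=\frac{1}{\alpha}\|{\tt bdiag}(\mathcal{L}(\mathcal{A}))\|_*.$$

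For the second equality, I invoke the $*_\mathcal{L}$-SVD \eqref{tsvd1}. Since $\mathcal{L}(\mathcal{U})^p$ and $\mathcal{L}(\mathcal{V})^p$ are orthogonal matrices and $\mathcal{L}(\mathcal{S})^p$ is diagonal with nonnegative entries in decreasing order, the decomposition $\mathcal{L}(\mathcal{A})^p=\mathcal{L}(\mathcal{U})^p\mathcal{L}(\mathcal{S})^p(\mathcal{L}(\mathcal{V})^p)^T$ is an honest SVD of $\mathcal{L}(\mathcal{A})^p$. Hence $\|\mathcal{L}(\mathcal{A})^p\|_*=\|\mathcal{L}(\mathcal{S})^p\|_*$, and summing over $p$ gives $\|{\tt bdiag}(\mathcal{L}(\mathcal{A}))\|_*=\|{\tt bdiag}(\mathcal{L}(\mathcal{S}))\|_*$, which completes the proof.

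The main (mild) obstacle is the duality argument in the middle paragraph — specifically verifying that the supremum of a linear functional against a block-diagonal matrix over the spectral-norm unit ball coincides whether we allow general or only block-diagonal competitors; the monotonicity $\|{\tt bdiag}(N)\|\le\|N\|$ is the key technical ingredient, and all other steps are bookkeeping with the already established identities for $\mathcal{L}$ and ${\tt bdiag}$.
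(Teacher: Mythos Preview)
Your proof is correct and follows the same route as the paper: rewrite the dual-norm definition via the Parseval identity $\langle\mathcal{A},\mathcal{B}\rangle=\frac{1}{\alpha}\langle{\tt bdiag}(\mathcal{L}(\mathcal{A})),{\tt bdiag}(\mathcal{L}(\mathcal{B}))\rangle$, identify the resulting supremum as a matrix nuclear norm, and then use the $*_\mathcal{L}$-SVD for the second equality. The paper simply asserts the passage from the supremum over block-diagonal competitors to $\|{\tt bdiag}(\mathcal{L}(\mathcal{A}))\|_*$ in one line, whereas you supply the justification (via the monotonicity $\|{\tt bdiag}(N)\|\le\|N\|$) that the paper leaves implicit; otherwise the arguments are identical.
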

			\begin{proof}
				Starting by the definition of the nuclear norm in \eqref{eq 23}, we will get 
				\begin{eqnarray*}
					\left\Vert \mathcal{A}\right\Vert_*=\underset{\left\Vert \mathcal{B}\right\Vert\leq 1}{\sup}\, \left\vert \left<\mathcal{A}, \mathcal{B}\right> \right\vert &=&\dfrac{1}{\alpha} \underset{\left\Vert \mathcal{B}\right\Vert\leq 1}{\sup}\, \left\vert \left<{\tt bdiag}\left(\mathcal{L}\left(\mathcal{A}\right)\right), {\tt bdiag}\left(\mathcal{L}\left(\mathcal{B}\right)\right)\right> \right\vert \\
					&=& \dfrac{1}{\alpha}\left\Vert {\tt bdiag}(\mathcal{L}\left(\mathcal{A}\right)) \right\Vert_*.  
				\end{eqnarray*}
				Which gives the first equality of \eqref{eq 24}, and the last equality is trivial since ${\tt bdiag}\left(\mathcal{L}\left(\mathcal{A}\right)\right)={\tt bdiag}\left(\mathcal{L}\left(\mathcal{U}\right)\right){\tt bdiag}\left(\mathcal{L}\left(\mathcal{S}\right)\right){\tt bdiag}\left(\mathcal{L}\left(\mathcal{V}^T\right)\right)$ where $\mathcal{U}$, $\mathcal{S}$ and $\mathcal{V}$ are given from the $*_\mathcal{L}$-svd of $\mathcal{A}$.
			\end{proof}
		\medskip
		
\noindent 	We can also express the tensor nuclear norm of a $K^{th}$-order tensor as 
			\begin{equation}
			\left\Vert \mathcal{A}\right\Vert_*=\dfrac{1}{\alpha}\sum_{p=1}^{P}\left(\sum_{i=1}^{\min(I_1,I_2)}\mathcal{L}\left(\mathcal{S}\right)^p(i,i)\right).
			\end{equation} 
			%The above result is easy to verify from the last term of the equation \eqref{eq 24}.
			\begin{theorem}
				\label{theo21}
				 The envelope convex of the function tensor average rank  on  the set $\mathbb{S}=\left\{\mathcal{A}\in  \mathbb{K}^{I_1 \times I_2}_{I_3 \times \dots \times I_N} /\, \left\Vert \mathcal{A}\right\Vert \leq 1 \right\}$ is the tensor nuclear norm. 
			\end{theorem}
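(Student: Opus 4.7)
The plan is to reduce the statement to the classical Fazel theorem for matrices, which asserts that the convex envelope of $\mathrm{rank}(X)$ on the unit spectral-norm ball $\{X:\|X\|\le 1\}$ is the nuclear norm $\|X\|_*$. The reduction is carried out through the block-diagonalization $\mathcal{A}\mapsto M_\mathcal{A} := {\tt bdiag}(\mathcal{L}(\mathcal{A}))$, a linear bijection onto the subspace of block-diagonal matrices which converts the tensor spectral norm, nuclear norm and average rank into their matrix counterparts with controlled scaling constants.

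First, I would rewrite the three relevant quantities using the identities already established earlier in the paper: $\|\mathcal{A}\|=\|M_\mathcal{A}\|$ from the proposition on the spectral norm, $\|\mathcal{A}\|_*=\tfrac{1}{\alpha}\|M_\mathcal{A}\|_*$ from Theorem~\ref{theo 21}, and $\mathrm{rank}_a(\mathcal{A})=\tfrac{1}{P}\mathrm{rank}(M_\mathcal{A})$ from the Remark following its definition. Consequently, under the bijection $\mathcal{A}\leftrightarrow M_\mathcal{A}$, the set $\mathbb{S}$ corresponds to the intersection of the unit matrix spectral ball with the linear subspace of block-diagonal matrices.

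Second, I would apply the matrix Fazel theorem on this restricted set. Since for a block-diagonal $M=\mathrm{diag}(M_1,\ldots,M_P)$ the three quantities split as $\mathrm{rank}(M)=\sum_p\mathrm{rank}(M_p)$, $\|M\|=\max_p\|M_p\|$ and $\|M\|_*=\sum_p\|M_p\|_*$, the restricted spectral ball is a product of matrix spectral balls and the convex-envelope computation separates across blocks. Applying Fazel block-wise and summing yields that the convex envelope of $\mathrm{rank}$ on the block-diagonal-restricted spectral ball equals $\|M\|_*$.

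Third, I would transport this identity back through the linear bijection, using that affine bijections (between the tensor space and the block-diagonal subspace) commute with the convex-envelope operation. The scaling factors $\tfrac{1}{P}$ and $\tfrac{1}{\alpha}$ coming from the definitions of $\mathrm{rank}_a$ and $\|\cdot\|_*$ then match the factors obtained on the matrix side, and the Fazel identity translates into the claimed tensor identity $\mathrm{conv\,env}(\mathrm{rank}_a)=\|\cdot\|_*$ on $\mathbb{S}$. The main obstacle is the rigorous justification of the second step, namely that the convex envelope of $\mathrm{rank}$ on the block-diagonal intersection with the spectral ball is exactly the restriction of the nuclear norm; this relies on the separability of $\mathrm{rank}$, $\|\cdot\|$ and $\|\cdot\|_*$ across the diagonal blocks and on the fact that the convex envelope of a separable sum over a product set equals the sum of the individual convex envelopes. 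A secondary but delicate point is the bookkeeping of the normalizing constants $\alpha$ and $P$ so that the final identity emerges with no spurious factor.
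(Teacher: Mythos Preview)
Your approach is essentially the same as the paper's: both reduce the claim to Fazel's matrix theorem applied to each diagonal block $\mathcal{L}(\mathcal{A})^i$ separately, using that $\|\mathcal{A}\|\le 1$ is equivalent to $\|\mathcal{L}(\mathcal{A})^i\|\le 1$ for every $i$, and that both $\mathrm{rank}_a$ and $\|\cdot\|_*$ are (scaled) sums over the blocks. The paper's proof is terser and simply invokes Fazel block-wise without explicitly justifying the separability of the convex envelope over a product domain or tracking the constants $\alpha$ and $P$; your plan makes both of these points explicit, which is an improvement in rigor rather than a different route.
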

			\begin{proof}
				Let $\mathcal{A}\in \mathbb{S}$, then 
				$\left\Vert \mathcal{A}\right\Vert\leq 1$, 
				which means that for $i=1,\,2 ,\, \dots, \, P$, we have 
				$$\left\Vert \mathcal{L}\left(\mathcal{A}\right)^i\right\Vert \leq 1$$
				and by using the fact that the tensor average rank is the average of the tensor multirank, it follows (see  \cite{fazel2001rank})  that for each $i=1,\, 2,\, \dots ,\, P$, the envelope convex of $\rho_i$ is $\left\Vert \mathcal{L}\left(\mathcal{A}\right)^i \right\Vert_*$. \\
				Consequently, the envelope convex of the function average rank is the tensor nuclear norm.
			\end{proof}

	\medskip
\noindent		
Next,  we define the tensor singular value thresholding under the $*_\mathcal{L}$-product ($*_\mathcal{L}$-svt).
			\begin{definition}
				Let $\mathcal{A}\in \mathbb{K}^{I_1 \times I_2}_{I_3 \times \dots \times I_N}$ an $N^{th}$-order tensor and $\tau > 0$, then we call its tensor singular value thresholding the  following tensor 
				\begin{equation}
				\mathcal{D}_\tau \left(\mathcal{A}\right)=\mathcal{U}* \mathcal{S}_\tau * \mathcal{V}^T
				\end{equation}
				where $\mathcal{U}$, $\mathcal{S}$ and $\mathcal{V}$ are given from the geberalize t-svd \eqref{tsvd1}, and $\mathcal{S}_\tau=\mathcal{L}^{-1}\left((\mathcal{L}(\mathcal{S})-\tau)_+\right)$.
			\end{definition}
			We also have  the following important result that links the tensor nuclear norm and the tensor singular value thresholding.
				\medskip
			\noindent
			\begin{theorem}
				\label{theo23}
				Let $\mathcal{A}\in \mathbb{K}^{I_1 \times I_2}_{I_3 \times \dots \times I_N}$ and $\tau > 0$, then we have  
				\begin{eqnarray}\label{eq 26}
				\mathcal{D}_\tau\left(\mathcal{A}\right)=\underset{\mathcal{Y}\in \mathbb{K}^{I_1 \times I_2}_{I_3 \times \dots \times I_N}}{\arg\, \min}\, \tau\left\Vert \mathcal{Y}\right\Vert_* +\dfrac{1}{2}\left\Vert \mathcal{Y}-\mathcal{A}\right\Vert_F^2
				\end{eqnarray}
			\end{theorem}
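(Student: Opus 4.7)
The plan is to push the whole minimization into the $\mathcal{L}$-domain, where both objective terms decouple across the representative matrices $\mathcal{L}(\mathcal{Y})^p$, and then invoke the classical matrix singular value thresholding theorem of Cai--Cand\`es--Shen slicewise. The key point is that the paper's constructions have been arranged so that $\mathcal{L}$ turns $*_\mathcal{L}$-products into face-wise products and the $*_\mathcal{L}$-SVD into a collection of ordinary matrix SVDs, one per representative.

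First I would rewrite the objective using the Parseval-type identity $\|\mathcal{Y}-\mathcal{A}\|_F^2 = \frac{1}{\alpha}\sum_{p=1}^P \|\mathcal{L}(\mathcal{Y})^p-\mathcal{L}(\mathcal{A})^p\|_F^2$ and the identity from Theorem \ref{theo 21}, namely $\|\mathcal{Y}\|_* = \frac{1}{\alpha}\|\mathrm{bdiag}(\mathcal{L}(\mathcal{Y}))\|_* = \frac{1}{\alpha}\sum_{p=1}^P \|\mathcal{L}(\mathcal{Y})^p\|_*$. Substituting gives
$$\tau\|\mathcal{Y}\|_* + \tfrac{1}{2}\|\mathcal{Y}-\mathcal{A}\|_F^2 \;=\; \frac{1}{\alpha}\sum_{p=1}^P\Big[\,\tau\|\mathcal{L}(\mathcal{Y})^p\|_* + \tfrac{1}{2}\|\mathcal{L}(\mathcal{Y})^p-\mathcal{L}(\mathcal{A})^p\|_F^2\,\Big].$$
Since $\mathcal{L}$ is bijective on $\mathbb{K}^{I_1\times I_2}_{I_3\times\dots\times I_N}$, minimizing over $\mathcal{Y}$ is the same as minimizing freely over the $P$ representatives $\mathcal{L}(\mathcal{Y})^p$, and the right-hand side decouples into $P$ independent matrix subproblems; the common factor $1/\alpha$ is irrelevant.

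Next I would invoke the matrix SVT theorem on each slice: for any matrix $A$ with SVD $U\Sigma V^T$, the unique minimizer of $\tau\|Y\|_* + \tfrac{1}{2}\|Y-A\|_F^2$ is $U(\Sigma-\tau)_+V^T$. Applied with $A=\mathcal{L}(\mathcal{A})^p$ and its SVD $\mathcal{L}(\mathcal{U})^p\,\mathcal{L}(\mathcal{S})^p\,(\mathcal{L}(\mathcal{V})^p)^T$ produced by Algorithm \ref{alg:3}, the $p$-th optimal representative is $\mathcal{L}(\mathcal{U})^p(\mathcal{L}(\mathcal{S})^p-\tau)_+(\mathcal{L}(\mathcal{V})^p)^T$. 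Collecting these into a tensor identifies $\mathcal{L}(\mathcal{Y}^\star) = \mathcal{L}(\mathcal{U})\triangle(\mathcal{L}(\mathcal{S})-\tau)_+\triangle\mathcal{L}(\mathcal{V})^T$, so applying $\mathcal{L}^{-1}$ and using the definition $\mathcal{S}_\tau = \mathcal{L}^{-1}((\mathcal{L}(\mathcal{S})-\tau)_+)$ together with the definition of $*_\mathcal{L}$ yields $\mathcal{Y}^\star = \mathcal{U}*_\mathcal{L}\mathcal{S}_\tau*_\mathcal{L}\mathcal{V}^T = \mathcal{D}_\tau(\mathcal{A})$.

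The only subtle point is making sure that what the paper calls the $*_\mathcal{L}$-SVD genuinely produces a valid matrix SVD on each slice, so that the classical matrix SVT theorem can be applied to $\mathcal{L}(\mathcal{A})^p$; this is exactly how $\mathcal{U},\mathcal{S},\mathcal{V}$ are constructed in Algorithm \ref{alg:3}, so that hypothesis is free. Everything else is bookkeeping, and the proof is linear with no need for convex-analytic subdifferential calculus at the tensor level—the decoupling via $\mathcal{L}$ reduces the tensor problem to $P$ instances of the matrix SVT theorem.
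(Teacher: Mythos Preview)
Your proposal is correct and follows essentially the same approach as the paper's proof: transform the objective into the $\mathcal{L}$-domain via Theorem~\ref{theo 21} and the Frobenius identity, decouple into $P$ independent matrix subproblems, and apply the classical matrix SVT result of Cai--Cand\`es--Shen \cite{cai2010singular} to each $\mathcal{L}(\mathcal{A})^p$. Your write-up is in fact more explicit than the paper's, which states the same steps more tersely.
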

			\begin{proof}
				Solving the optimization problem  \eqref{eq 26} is equivalent to solve the following one 
				\begin{equation}\label{eq 277}
				\underset{\mathcal{Y}\in \mathbb{K}^{I_1 \times I_2}_{I_3 \times \dots \times I_N}}{\arg \min}\, \dfrac{1}{\alpha}\left\{ \tau \left\Vert \ 
				{\tt bdiag}(\mathcal{\mathcal{L}}\left({\mathcal{Y}}\right))\right\Vert_* + \dfrac{1}{2}\left\Vert {\tt bdiag}(\mathcal{\mathcal{L}}\left({\mathcal{Y}}\right))-{\tt bdiag}(\mathcal{\mathcal{L}}\left({\mathcal{A}}\right))\right\Vert_F^2 \right\}.
				\end{equation}
				The optimization problem given in \eqref{eq 277} can be solved by solving $P$ subproblems independently, i.e., for each $i\in \{1,\, 2,\, \dots ,\, P\}$ we will try to find the solution of 
				\begin{equation}\label{eq 288}
				\underset{  \mathcal{\mathcal{L}}\left({\mathcal{Y}}\right)^i  \in \mathbb{K}^{I_1 \times I_2} }{\arg \min}\, \left\{ \tau \left\Vert \mathcal{\mathcal{L}}\left({\mathcal{Y}}\right)^i \right\Vert_* + \dfrac{1}{2}\left\Vert \mathcal{\mathcal{L}}\left({\mathcal{Y}}\right)^i -\mathcal{\mathcal{L}}\left({\mathcal{A}}\right)^i \right\Vert_F^2  \right\}.
				\end{equation}
				Using \cite{cai2010singular},  the solution of  each subproblem in \eqref{eq 288} is $\mathcal{L}\left(\mathcal{D}_\tau \left(\mathcal{A}\right)\right)^i$. Therefore, $\mathcal{D}_\tau \left(\mathcal{A}\right)$ solves \eqref{eq 26}.
			\end{proof}
		
			\medskip
		\noindent
			In the following algorithm we  give the different  steps of computing the tensor $*_\mathcal{L}$-svt
			\begin{algorithm}[H]
				\caption{The $*_\mathcal{L}$-svt.}
				\label{alg:5}
				\begin{algorithmic}[1]
					\STATE \textbf{Inputs:} $\mathcal{A}\in \mathbb{K}^{I_1\times I_2}_{I_3\times \dots \times I_N}$ and $\tau>0$. 
					\STATE \textbf{Output:} $\mathcal{D}_\tau\left(\mathcal{A}\right)\in \mathbb{K}^{I_1\times I_2}_{I_3\times \dots \times I_N}$.
					\STATE \textbf{Compute:} $\hat{\mathcal{A}}=\mathcal{L}\left(\mathcal{A}\right)$
					\FOR {$i=1, \ldots ,P$}
					\STATE  $\left[\hat{\mathcal{U}}^{i}, \hat{\mathcal{S}}^{i}, \hat{\mathcal{V}}^{i}\right]=svd(\hat{\mathcal{A}}^{i})$
					\STATE $\hat{\mathcal{S}}^i=\max(\hat{\mathcal{S}}^i-\tau,0)$.
					\STATE $\hat{\mathcal{A}}^i=\hat{\mathcal{U}}^i\hat{\mathcal{S}}^i(\hat{\mathcal{V}}^i)^T$
					\ENDFOR
					\STATE $\mathcal{A}=\mathcal{L}^{-1}(\hat{\mathcal{A}})$.
				\end{algorithmic}
			\end{algorithm}
		\end{subsection}
	\end{section}
	\begin{section}{Tensor completion using $*_\mathcal{L}$-product}
		\label{sec3}
		Tensor completion is the  problem that consists in finding some unknown pixels of the data from an observed data that contains  some known  pixels. Many algorithms have been developed the last years; see  \cite{bengua2017efficient,xu2013parallel,liu2012tensor}. Some of those methods  use regularization techniques such as the total variation regularization \cite{ji2016tensor,ding2019low}. Those algorithms suffer from the computationally costs and the slowness. In \cite{zhang2014novel,Ng2019fast} the problem of tensor completion using the  t-product and the c-product with  regularized  total variation gave  good results. The  problem of those methods is the fact that they are applied only to  third-order tensors. Next, we propose to extend those methods to high order tensors using the $*_\mathcal{L}$-product we defined in the preceeding section for tensors of order greater than three.\\
		The main optimization problem that solves the problem of tensor completion consists in finding a low-rank tensor that contains the main information (the known pixels), which depends on the  definition  of the rank that we will consider. In our proposed method, we will consider  the average rank of a tensor. Thus our main optimization problem is given as follows 
		\begin{eqnarray}\label{eq48}
		&\underset{\mathcal{X}\in \mathbb{K}^{I_1 \times I_2}_{I_3 \times \dots \times I_N}}{\arg \, \min}&\, {\tt rank}_a\left(\mathcal{X}\right) \nonumber \\
		&{\tt s.t}&  \mathcal{P}_\Omega\left(\mathcal{X}\right)=\mathcal{P}_\Omega\left(\mathcal{M}\right),		
		\end{eqnarray}
		where $\mathcal{X}$ is the underlying tensor, $\mathcal{M} \in \mathbb{K}^{I_1 \times I_2}_{I_3 \times \dots \times I_N}$ is the observed tensor, $\Omega$ is the set of the known pixels and $\mathcal{P}_\Omega$ is the projection operator that copy the values of the pixels onto $\Omega$. However, the optimization problem \eqref{eq48} is  NP-hard \cite{hillar2013most}. It is known that convex optimization problem are the easiest optimization problems to solve and for this reason we will use the approximation of the function average rank given  in Theorem \ref{theo21}. Our optimization problem is transformed to the following one
		\begin{eqnarray}\label{eq 49}
		&\underset{\mathcal{X}\in \mathbb{K}^{I_1 \times I_2}_{I_3 \times \dots \times I_N}}{\min}&\, \left\Vert \mathcal{X}\right\Vert_* \nonumber \\
		&{\tt s.t}& \mathcal{P}_\Omega\left(\mathcal{X}\right)=\mathcal{P}_\Omega\left(\mathcal{M}\right),
		\end{eqnarray}
		where $\left\Vert . \right\Vert_*$ is the tensor nuclear norm in Theorem \ref{theo 21}.  The main techniques for solving \eqref{eq 49} is the Proximal Gradient Algorithm (PGA). The problem  \eqref{eq 49}  will be solved iteratively as 
		\begin{equation}\label{eq 50}
		\mathcal{X}^{k+1}=\underset{\mathcal{X}\in \mathbb{K}^{I_1 \times I_2}_{I_3 \times \dots \times I_N}}{\arg\, \min}\mu \left\Vert \mathcal{X}\right\Vert_* + \dfrac{1}{2}\left\Vert \mathcal{X}-\mathcal{G}^k\right\Vert_F^2,
		\end{equation}
	 where 
		\begin{eqnarray}
		&\mathcal{G}^k&=\mathcal{Y}^k-\left(\mathcal{P}_\Omega(\mathcal{Y}^k)-\mathcal{P}_\Omega(\mathcal{M})\right)=\mathcal{P}_{\Omega^c}(\mathcal{Y}^k)-\mathcal{P}_\Omega(\mathcal{M}),\, and  \label{eq51} \\
		&\mathcal{Y}^k&=\mathcal{X}^k+\dfrac{t_{k-1}-1}{t_k}\left(\mathcal{X}^k-\mathcal{X}^{k-1}\right). \label{eq52}
		\end{eqnarray}
	Notice that in this case,  the function $f$ involved in \eqref{eq11} is given by $f(\mathcal{X})=\dfrac{1}{2}\left\Vert \mathcal{P}_\Omega(\mathcal{X})-\mathcal{P}_\Omega(\mathcal{M})\right\Vert_F^2$  with $\nabla f(\mathcal{X})=\mathcal{P}_\Omega(\mathcal{X})-\mathcal{P}_\Omega(\mathcal{M})$. Therefore,  the Lipshitz constant $l_f$ of $\nabla f$ is equal to $1$. 
		It is clear from Theorem \ref{theo23}, that the solution of the problem \eqref{eq 50} is the t-svt of the tensor $\mathcal{G}^k$, i.e., 
		\begin{equation}\label{eq53}
		\mathcal{X}^{k+1}=\mathcal{D}_\mu\left(\mathcal{G}^k\right).
		\end{equation}
		THe following algorithm  summarizes all the steps  of the proposed method
		\begin{algorithm}[H]
			\caption{Tensor completion using tensor nuclear norm by PGA.}
			\label{alg:5}
			\begin{algorithmic}[1]
				\STATE \textbf{Inputs:} $\mathcal{M}\in \mathbb{K}^{I_1\times I_2}_{I_3\times \dots \times I_N}$, $tol$, $\nu$, $\mu_0$. 
				\STATE \textbf{Initialize:} $\mathcal{X}^0=\mathcal{X}^1=0$, $t_0=t_1=0$, $\bar{\mu}=\nu \mu_0$, $itermax=100$, k=1.
				\WHILE {not converged}
				\STATE Update $\mathcal{\mathcal{Y}}^k$ from \eqref{eq52}.
				\STATE Update $\mathcal{\mathcal{G}}^k$ from \eqref{eq51}.
				\STATE Update $\mathcal{X}^{k+1}$ from \eqref{eq53}.
				\STATE $t_{k+1}=\dfrac{1+\sqrt{4t_k^2+1}}{2}$.
				\STATE $\mu_{k+1}=max\left(\nu \mu_k,\, \bar{\mu}\right)$.
				\ENDWHILE
			\end{algorithmic}
		\end{algorithm}
	\end{section}
\noindent In  Table \ref{tablefc}, we give the cost of the different tensor operations ( $*_\mathcal{L}$-product, $*_\mathcal{L}$-svd and $*_\mathcal{L}$-svt) when using FFT or DCT.
\begin{figure}[h!]
	\centering
	\small\addtolength{\tabcolsep}{-2pt}
	\begin{tabular}{l|l}
		\hline \multicolumn{2}{l}{\hspace*{2.5 cm}$*_\mathcal{L}$-product of $\mathcal{X}\in \mathbb{K}^{I_1\times n}_{I_3 \times \dots \times I_N}$ and $\mathcal{Y}\in \mathbb{K}^{n\times I_2}_{I_3 \times \dots \times I_N}$}\\
		\hline  \hspace*{2 cm} FFT & \hspace*{2 cm} DCT \\
		\hline   $O\left((PI_1n+PI_2n+PI_1I_2)\sum_{i=3}^{N}\log(I_i)\right)$   & $O\left((PI_1n+PI_2n+PI_1I_2)\sum_{i=3}^{N}\log(I_i)\right)$    \\
		$+4O\left(PI_1n^2I_2\right)$ & $+O\left(PI_1n^2I_2\right)$\\
		\hline
		\hline \multicolumn{2}{l}{\hspace*{4.5 cm}$*_\mathcal{L}$-svd of $\mathcal{X}\in \mathbb{K}^{I_1\times I_2}_{I_3 \times \dots \times I_N}$}\\
		\hline  \hspace*{2 cm} FFT & \hspace*{2 cm} DCT \\
		\hline   $O\left((2PI_1I_2+PI_2^2+PI_1^2)\sum_{i=3}^{N}\log(I_i)\right)$   & $O\left((2PI_1I_2+PI_2^2+PI_1^2)\sum_{i=3}^{N}\log(I_i)\right)$  \\
		$+2O\left(P\min(I_1^2I_2, I_1I_2^2)\right)$ & $+O\left(P\min(I_1^2I_2, I_1I_2^2)\right)$\\
		\hline
		\hline \multicolumn{2}{l}{\hspace*{4.5 cm}$*_\mathcal{L}$-svt of $\mathcal{X}\in \mathbb{K}^{I_1\times I_2}_{I_3 \times \dots \times I_N}$}\\
		\hline  \hspace*{2 cm} FFT & \hspace*{2 cm} DCT \\
		\hline   $O\left((2PI_1I_2+PI_2^2+PI_1^2)\sum_{i=3}^{N}\log(I_i)\right)$   & $O\left((2PI_1I_2+PI_2^2+PI_1^2)\sum_{i=3}^{N}\log(I_i)\right)$  \\
		$+2O\left(P\min(I_1^2I_2, I_1I_2^2)\right)+8O\left(PI_1n^2I_2\right)$ & $+O\left(P\min(I_1^2I_2, I_1I_2^2)\right)+2O\left(PI_1n^2I_2\right)$\\
		\hline
	\end{tabular}
	\captionof{table}{The cost of computing the $*_\mathcal{L}$-product, $*_\mathcal{L}$-svd and the $*_\mathcal{L}$-svt by using Fourier and cosine transforms.}\label{tablefc}
\end{figure}

\newpage 
\begin{section}{Numerical experiments}
	\label{sec5}
	In this section we test the performance of our algorithms for  high order tensor completion using  the Fourier and Cosine for thethe operator $\mathcal{L}$  and we will compare  the obtained results with  those ontained by some existing known algorithms on color videos. In Subsection \ref{ssec5.1},  the tests were performed with Matlab 2018a, on an Intel i5 laptop with 16 Go of memory, and in Subsection \ref{ssec5.2} we use codes with  Python on a machine that uses a CPU of type Intel Xeon Gold 6152 with a
		frequency from 2.1Ghz to 3.7 GHz and a GPU of type NVIDIA Tesla Pascal 40. All 
	the tests are computed using a single core. \\
	%We will show some results of tensor compression for fourth-order tensors under the tubal rank.
	The quality of the obtained data can be computed by the peak signal-to-noise-ration (PSNR)  defined by  
	\begin{equation}
	PSNR=10 \,\log_{10} \dfrac{Max_{\mathcal{A}_{obt}}^2}{\left\Vert \mathcal{A}_{obt}-\mathcal{A}_{ori}\right\Vert_F^2},
	\end{equation}
	and the relative squared error (RSE) given by
	\begin{equation}
	RSE=\dfrac{\left\Vert \mathcal{A}_{ori}-\mathcal{A}_{obt}\right\Vert_F^2}{\left\Vert \mathcal{A}_{obt}\right\Vert_F^2},
	\end{equation}
	where $\mathcal{A}_{ori}$ is the original tensor, $\mathcal{A}_{obt}$ is the obtained recovered  tensor and $Max_{\mathcal{A}_{obt}}$ is the maximum pixel of the recovered tensor. The quality of the recovered data is good when the value of RSE is small and the value of PSNR is high. 
	In our  experiments of tensor completion,  we use   $M_i=F_i$ for Fourier transform  and $M_i=C_i$ for Cosine transform, where the transformed data $\mathcal{L}(\mathcal{X})$ for an $N^{th}$-order tensor is given by ${\tt fft}(\mathcal{X}, [\,],3)$ and ${\tt dct}(\mathcal{X}, [\,],3)$, respectivly. The parameters  $\mu_0$ and $\nu$ stated the  algorithm of completion have to be fixed. We set $\nu=0.9$ and $\mu_0= \nu \left\Vert \mathcal{M}\right\Vert_F$, where the stopping criterion convergence of this algorithm is as follows
	\begin{equation}
		\dfrac{\left\Vert \mathcal{Y}^k -\mathcal{X}^{k+1}\right\Vert_F}{\left\Vert \mathcal{X}^{k+1}\right\Vert_F}\leq 10^{-4}.
	\end{equation}
%	Remark that the main operators in this work are $*_\mathcal{L}$-product, $*_\mathcal{L}$-svd and $*_\mathcal{L}$-svt, 

%In Figure \ref{fig1} we will show some color images of different sizes. In Figure \ref{fig2} we will show the $20^{th}$ frame of some gray scal videos and Figure \ref{fig3} is for the $20^{th}$ bound of the color videos, with the $i^{th}$ bound of a fourth-order tensor $\mathcal{X}$ is $\mathcal{X}(:,:,:,i)$. 
\medskip

	\noindent Figure \ref{fig3} shows  the data tests used  in our experiments. 
\noindent The video of xylophone is available from  Matlab, and the videos Akiyo and News are available from \footnote{url{http://trace.eas.asu.edu/yuv/}}.
	\begin{figure}
		\centering
		\begin{tabular}{ccc}
			\includegraphics[width=0.215\linewidth]{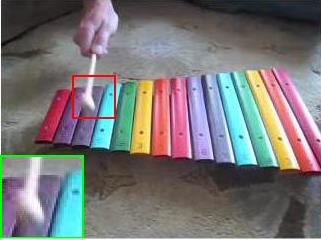}&
			\includegraphics[width=0.2\linewidth]{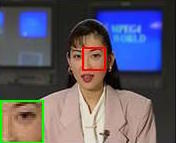}&
			\includegraphics[width=0.2\linewidth]{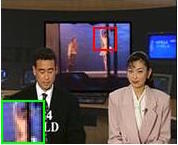}\\
			\textbf{xylophone} & \textbf{Akiyo}  &\textbf{news} 
		\end{tabular}
		\captionof{figure}{The $20^{th}$ bound of the color videos of the test data.}
		\label{fig3}
	\end{figure}
In Table \ref{tabdata}, we give the size of the different used color videos tests.
\begin{figure}[h!]
	\centering
	\begin{tabular}{l|l}
		\hline  name & size\\
		%& tempete &   $192 \times  192 \times  192$ \\
		\hline   xylophone &  $240  \times 320  \times   3  \times 141$  \\
		 %traffic  &   $120\times   160 \times    3  \times 120 $\\
		 Akiyo 
		%$\footnote{ \url{http://trace.eas.asu.edu/yuv/}.}$
		&  $144 \times  176  \times   3 \times  300
		$  \\
		 news & $144 \times  176  \times   3  \times 300$  \\
		%& foreman &   $144 \times  176   \times  3 \times  300$ \\
		%&  mother-daughter & $144 \times  176   \times  3 \times  300$  \\
		 %container &     $144  \times 176  \times   3  \times 300$    \\
		\hline
	\end{tabular}
	\captionof{table}{The name of the test data (color videos) and their sizes.}\label{tabdata}
\end{figure}

	\begin{subsection}{Tensor completion for color videos}
		\label{ssec5.1}
			In this part we show the obtained results of our algorithms TNN-PGA-F and TNN-PGA-C on fourth-order tensors (color videos) and compare them   with other  ones such as the methods named  Tmac \cite{xu2013parallel} and HaLRTC \cite{liu2012tensor}. The comparison will be  in terms of the efficiency and executing times by comparing the values of RSE and PSNR, the number of iterations the required cpu-time.  In Figure \ref{fig8} we show the $20^{th}$ bound of the color videos xylophone, Akiyo and news with only  $5\%$ of the original data. In Figure \ref{fig9},  we show the $20^{th}$ bound of the recovered data  for each color video obtained by by the algorithms HaLRTC, Tmac, TNN-PGA-F and TNN-PGA-C.
			\begin{figure}[h!]
				\centering
				\begin{tabular}{ccc}
					\includegraphics[width=0.2\linewidth]{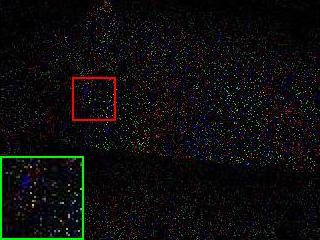}&
					\includegraphics[width=0.184\linewidth]{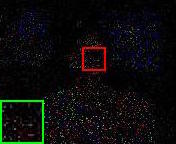}&
					\includegraphics[width=0.185\linewidth]{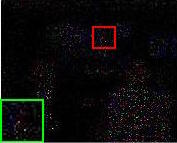}\\
					\textbf{xylophone} & \textbf{akiyo}  & \textbf{news} 
				\end{tabular}
				\captionof{figure}{The $20^{th}$ bound  of color videos of the test data with only  $5\%$ of the original data, i.e., $sr=0.05$.}
				\label{fig8}
			\end{figure}
			\begin{figure}
				\centering
				\begin{tabular}{cccc}
					\includegraphics[width=0.2\linewidth]{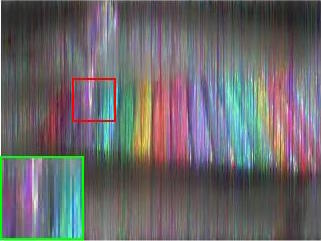}&
					\includegraphics[width=0.2\linewidth]{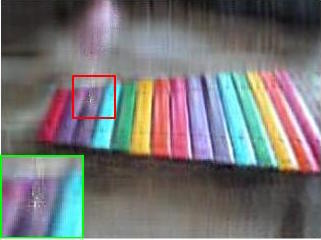}&
					\includegraphics[width=0.2\linewidth]{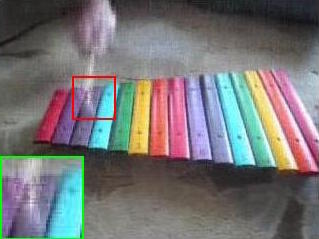}&
					\includegraphics[width=0.2\linewidth]{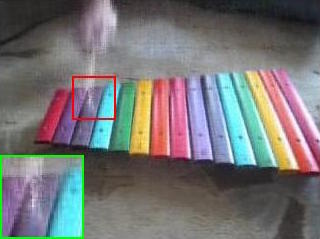}\\
				\end{tabular}
				\centering
				\begin{tabular}{cccc}
					\includegraphics[width=0.2\linewidth]{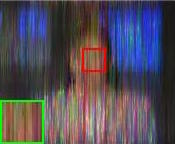}&
					\includegraphics[width=0.2\linewidth]{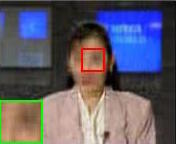}&
					\includegraphics[width=0.2\linewidth]{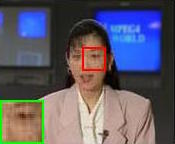}&
					\includegraphics[width=0.2\linewidth]{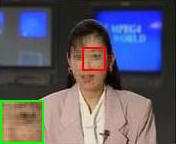}\\
				\end{tabular}
				\centering
				\begin{tabular}{cccc}
					\includegraphics[width=0.2\linewidth]{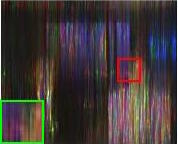}&
					\includegraphics[width=0.2\linewidth]{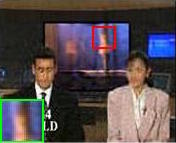}&
					\includegraphics[width=0.2\linewidth]{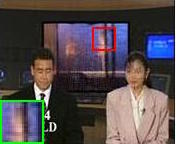}&
					\includegraphics[width=0.2\linewidth]{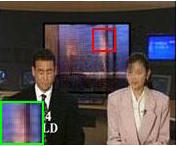}\\
					\textbf{HaLRTC} & \textbf{TMac}  & \textbf{TNN-PGA-F} &  \textbf{TNN-PGA-C}
				\end{tabular}
				\captionof{figure}{The $20^{th}$ bound of the recovered color videos obtained from the algorithms HaLRTC, Tmac, TNN-PGA-F and TNN-PGA-C for $sr=0.05$.}
				\label{fig9}
			\end{figure}
		
		\medskip
		\noindent 	In Table \ref{tab4} we report  the  values of RSE, PSNR, the number of iterations and the CPU time required by the algorithms HaLRTC, Tmac,TNN-PGA-F and TNN-PGA-C. Figure \ref{fig10} shows the curves representing the evolution of the RSE and PSNR versus the iteration number for Akiyo-video with $sr=0.05$. In Figure \ref{fig12} we give the values of the first $1000$ pixels of the recovered data of Akiyo-video for $sr=0.05$. Figure \ref{fig13}, shows  the values of RSE and PSNR for each bound of the  xylophone-video  for $sr=0.1$.
			\begin{figure}[H]
				\centering
				\small\addtolength{\tabcolsep}{-4.5pt}
				\begin{tabular}{l|l|l|l|l|l|l|l|l|l}
					\hline \multicolumn{2}{l|}{sr}& \multicolumn{4}{|l|}{$0.05$} & 	\multicolumn{4}{|l}{$0.1$}\\
					\hline Video & Algorithm & RSE & PSNR & Iteration & time & RSE & PSNR & Iteration & time \\
					\hline \multirow{4}{*}{Akiyo} & HaLRTC  & 0.3741  & 15.49  & 163  & 2464.8  & 0.2409 & 19.31  &  \;\,94 & 1632.4   \\
					&Tmca     &    0.0879
					&  30.25  & 343  &  \;\,991.0 &   0.0824  &  30.91  &  210 & \;\,587.6 \\
					&   TNN-PGA-F  & 0.0497   &33.84   & \;\,73  &\;\,619.8     &  0.0356  & 36.65  & \;\,70 & \;\,640.8\\
					&  TNN-PGA-C   & \textbf{0.0496}   & \textbf{33.91}  & \;\,99  & \textbf{\;571.2}    & \textbf{0.0350}   &  \textbf{36.80} & \;\,76 & \textbf{\;475.1} \\
					\hline \multirow{4}{*}{xylophone} &   HaLRTC   & 0.2732   & 17.49  &  163 &  3247.3   &  0.1910  & 20.60   & \;\,96  & 2012.0 \\
					&Tmca     & 0.1148   &   26.96  & 376  & 3354.7     &   0.1066 & 27.43 &  347 &  1703.7\\
					&   TNN-PGA-F  & \textbf{0.0888}   & \textbf{28.87}  &  \;\,73 &  1054.2   & 0.0680   & 30.77  & \;\,73 & 1041.4 \\
					&  TNN-PGA-C   &  0.0889  & 28.38  &  \;\,90 & \textbf{\;906.0}    &  \textbf{0.0660}  & \textbf{30.90}  & \;\,79 & \textbf{\;760.9}\\
					\hline \multirow{4}{*}{news} &   HaLRTC   &  0.4951  &  14.46 &  \;\,142 & 1876.3&    0.3694 &  17.01  & \;\,89  & 1038.2  \\
					&Tmca     &  0.1344
					& 27.95   &  1157 &  4480.0   &  0.1171  &  29.36 & 415 & 1752.6 \\
					&   TNN-PGA-F  &  \textbf{0.1058}  & \textbf{29.07}  &  \;\,\;\,71 & \;\,621.9     &  \textbf{0.0796}  &  \textbf{31.53} &\;\,69 &  \;\,573.4\\
					&  TNN-PGA-C   &  0.1156  &28.20   & \;\,\;\,99  & \textbf{\;614.0}     & 0.0838
					&  31.15 & \;\,77 & \textbf{\;475.5} \\
					\hline
				\end{tabular}
				\captionof{table}{The values of RSE, PSNR, the number of iteration and the time required by the algorithms HaLRTC, Tmac, TNN-PGA-F and TNN-PGA-C for $sr=0.05$ and $sr=0.1$.}
				\label{tab4}
			\end{figure}
			\begin{figure}[h!]
				\centering
				\hspace*{-1 cm}	\begin{tabular}{cc}
					\includegraphics[width=2.5in, height=1.5in]{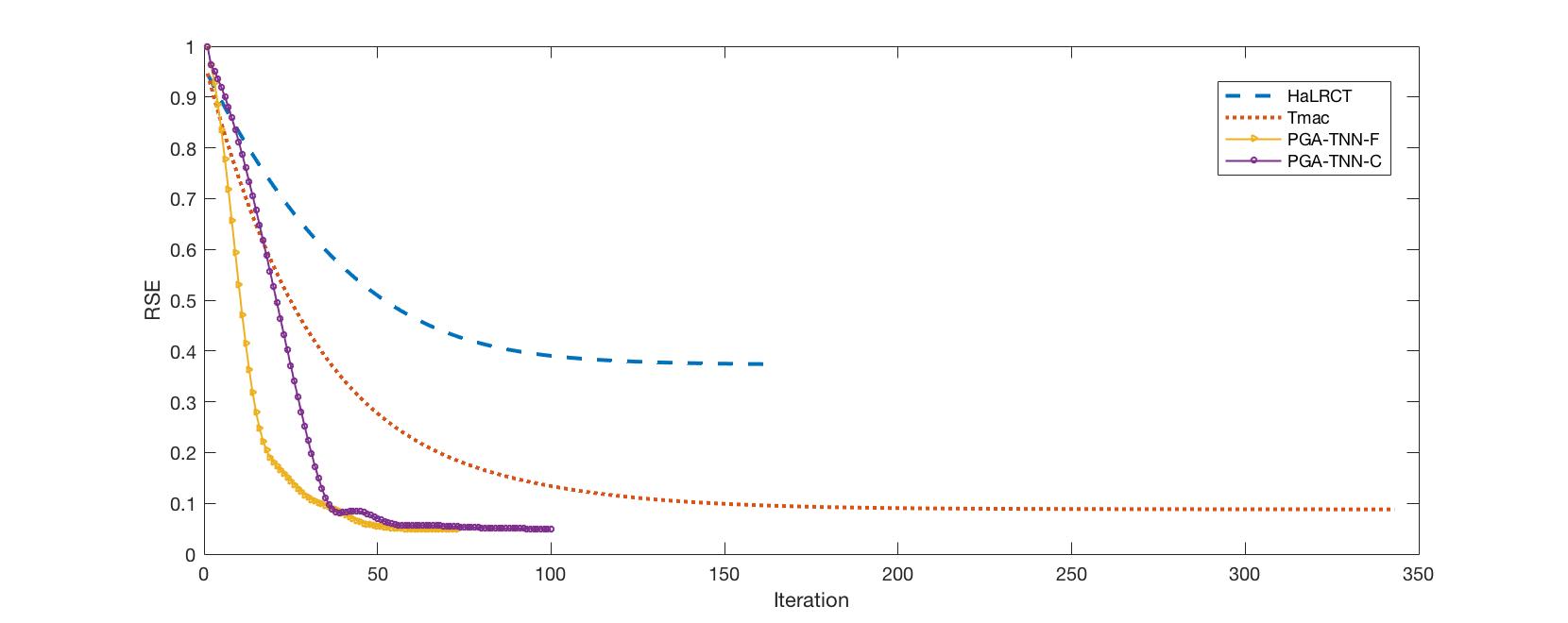}&
					\includegraphics[width=2.5in, height=1.5in]{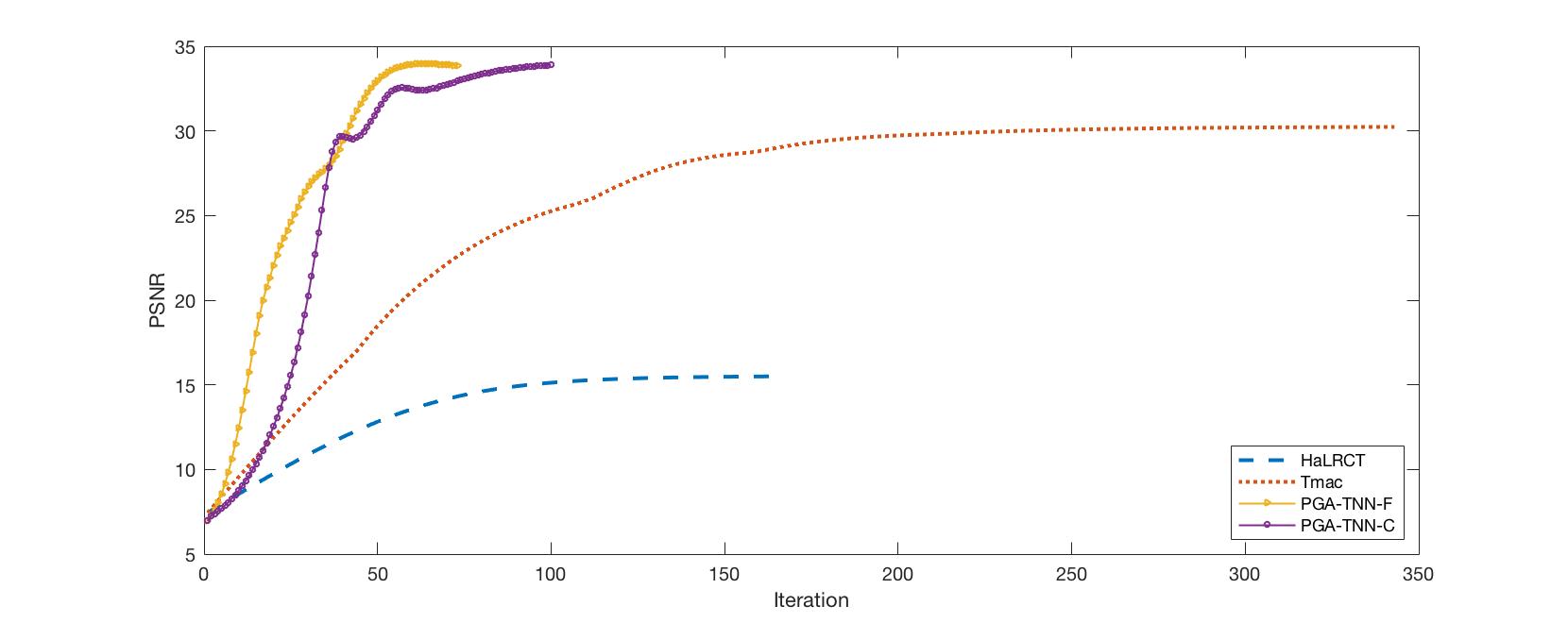}
				\end{tabular}
				\captionof{figure}{The evolution of the RSE and the PSNR values on each iteration of the algorithms HaLRTC, Tmac, TNN-PGA-F and TNN-PGA-C for the video of Akiyo with $sr=0.05$.}
				\label{fig10}
			\end{figure}
			\begin{figure}[h!]
				\centering
				\hspace*{-1 cm}	\begin{tabular}{c}
					\includegraphics[width=1\linewidth]{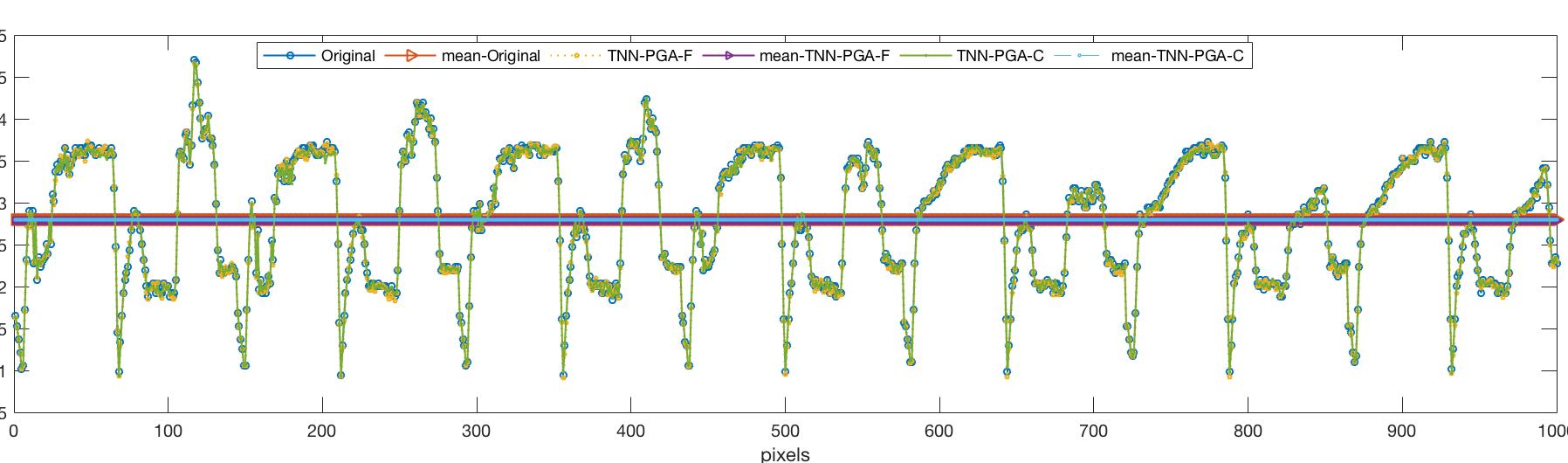}
				\end{tabular}
				\captionof{figure}{Comparison of the first $1000^{th}$ pixels of the video Akiyo obtained by the Algorithms TNN-PGA-F and TNN-PGA-F with $sr=0.05$ by the original data of the video.}
				\label{fig12}
			\end{figure}
			
			\begin{figure}[h!]
				\centering
				\hspace*{-1 cm}	\begin{tabular}{cc}
					\includegraphics[width=0.5\linewidth]{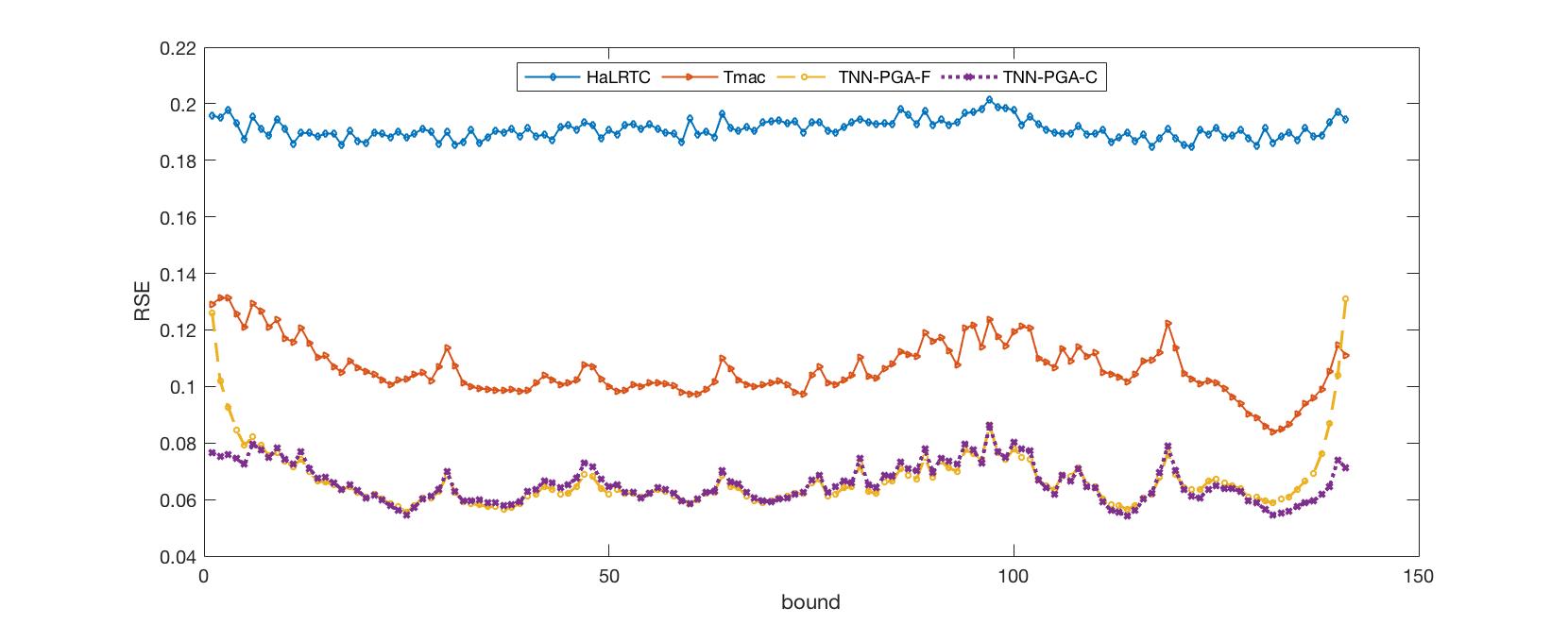}&
					\includegraphics[width=0.5\linewidth]{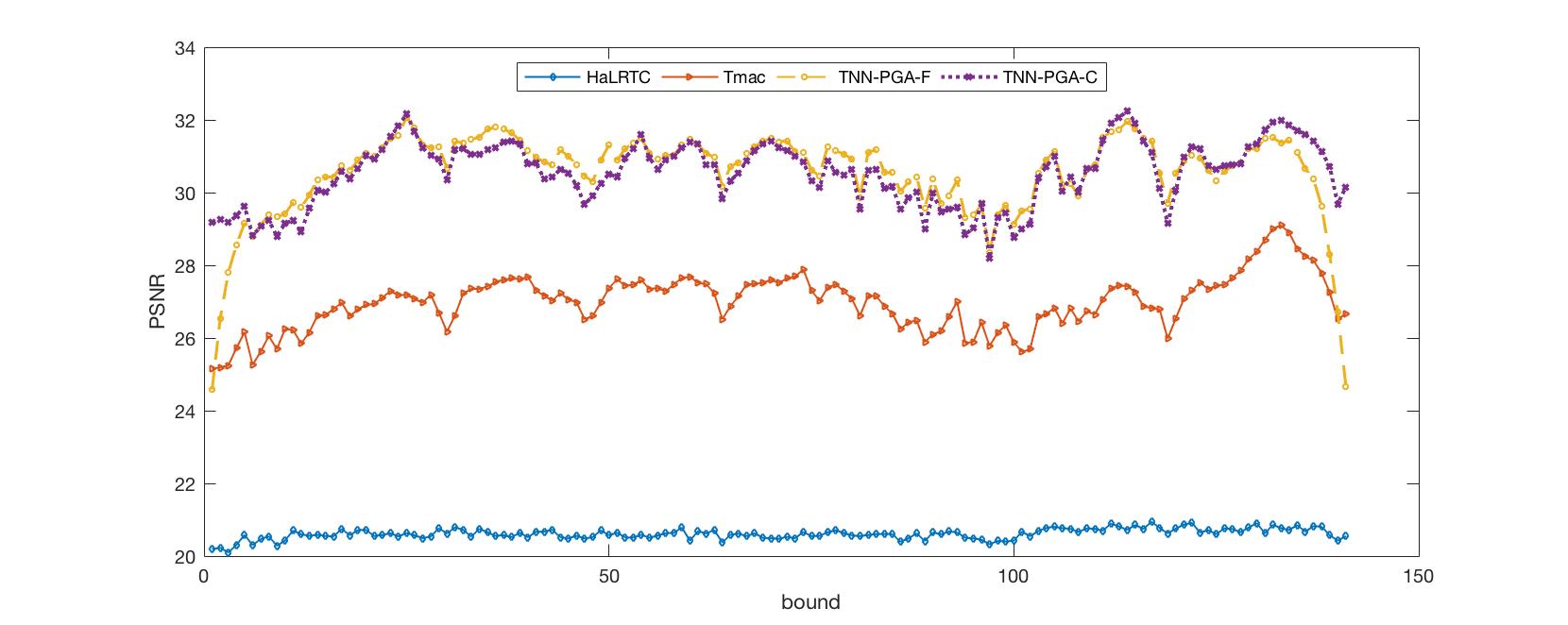}
				\end{tabular}
				\captionof{figure}{The values of RSE and PSNR ones each bound of the recovered video xylophone for $sr=0.1$.}
				\label{fig13}
			\end{figure}
		
		\medskip 
			\noindent From Figures \ref{fig9} we can see that our algorithms return very good results. Table \ref{tab4} confirms this fact  showing    an advantage for the cosine transform as compared to  the Fourier transform.
			 Figure \ref{fig10} shows  that  
				 the curves  obtained by our algorithms decreases (for RSE)  and increases (for PSNR)  quickly towards the minimum and the maximum value, respectively.  Figure \ref{fig12} shows that the values of the recovred data obtained by  the proposed two  algorithms are very close to the original data. Figure \ref{fig13} shows the efficiency of our algorithms as compared to  other ones for each bound of the used video. 
	\end{subsection}
\end{section}

\newpage
\begin{subsection}{Porting the python code for information completion to GPU using CuPy}
		\label{ssec5.2}
	Parallel computation can be very important in high performance computing  due to the limit of the use of a single core. As   a consequence,  the latest CPU manufacturers compete to have the most cores on a single CPU. For  highly parallel problems that can benefit from  more  cores  using a GPU (Graphics Processing Unit), it  is the best approach as  GPU sacrifices memory for more cores per unit and this leads to having a massively parallel capabilities over CPU. The mainly use of a GPU is to rapidly manipulate and alter memory to accelerate the creation of images.
	\\
	In our tests,  all the GPU accelerated libraries  utilize CUDA toolkit libraries which is a parallel computing platform and an API that allows interaction with a GPU in order to perform general purpose processing. That goes beyond just image data manipulation allowing  a more general approach called GP-GPU which stands for general purpose computing on graphics processing units. We used Cupy to accelerate some part of the CPU code.  CuPy is an open-source library with NumPy syntax that increases speed by doing matrix operations on NVIDIA GPUs. It is accelerated with the CUDA platform from NVIDIA and also uses CUDA-related libraries, including cuBLAS, cuDNN, cuRAND, cuSOLVER,cuSPARSE, and NCCL, to make full use of the GPU architecture.
	CuPy’s interface is highly compatible with NumPy and in most cases
	it can be used as a drop-in replacement that can easily integrated in already existing CPU code to boost the performance without much code changes.
	
	\subsubsection{Porting the python code to GPU using CuPy}
	Cupy provides an easy way to port a Python code   using Numpy and Scipy by  accelerating them  using GPU. The porting process can be  simple  by replacing some Numpy/Scipy functions by their equivalent in Cupy. \\
	
	\medskip
 \textbf{Code of tsvt by fft and dct:}
 \medskip
	\begin{itemize}
	\item \textbf{Before using CuPy}\\
	\includegraphics[scale=0.45]{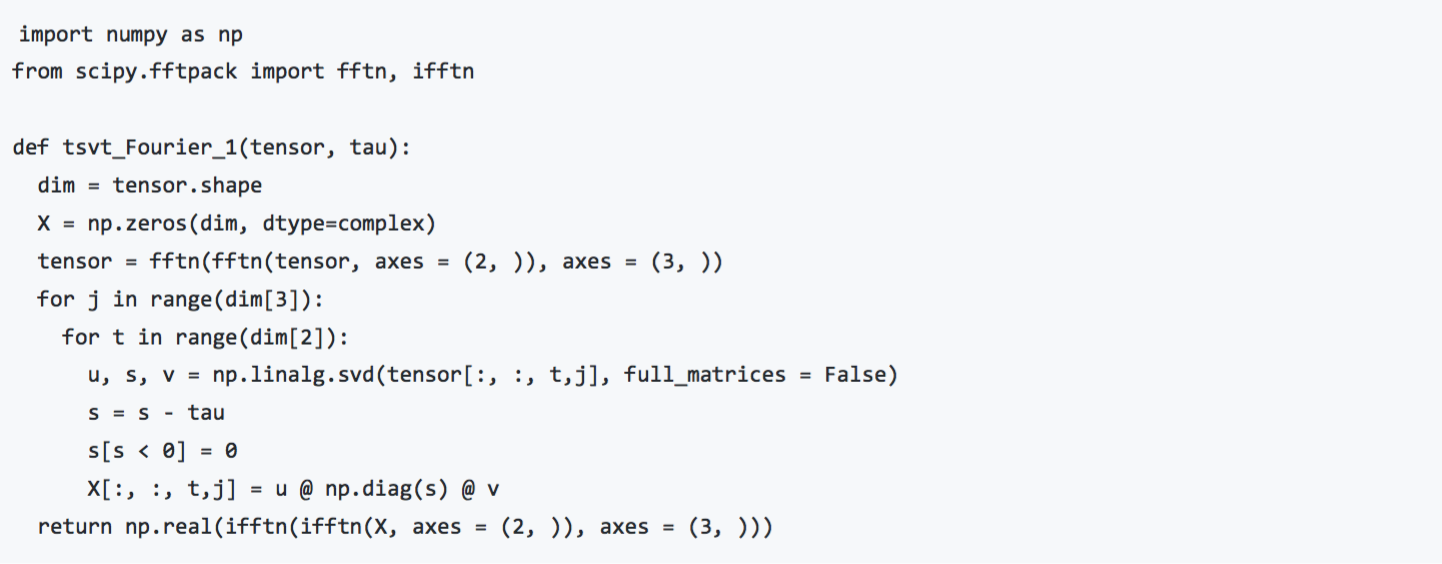}\\
	\vskip4cm
	\newpage
		\item \textbf{After using Cupy}\\
	\includegraphics[scale=0.45]{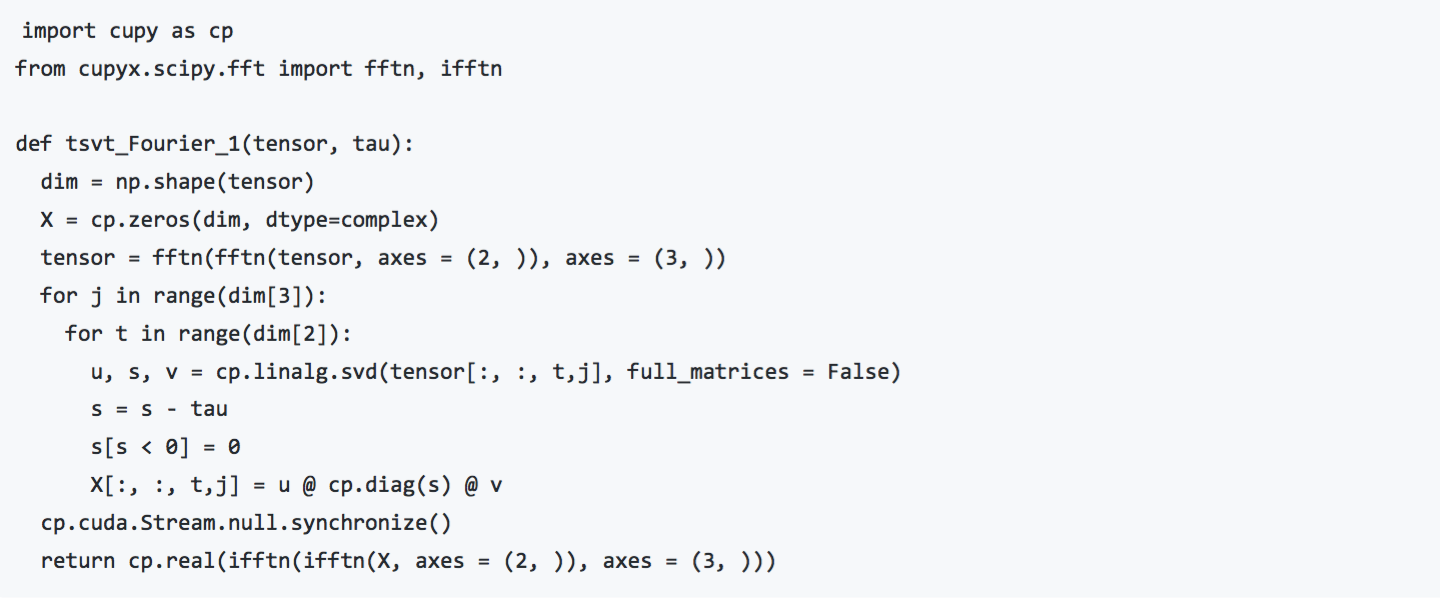}\\
		\includegraphics[scale=0.45]{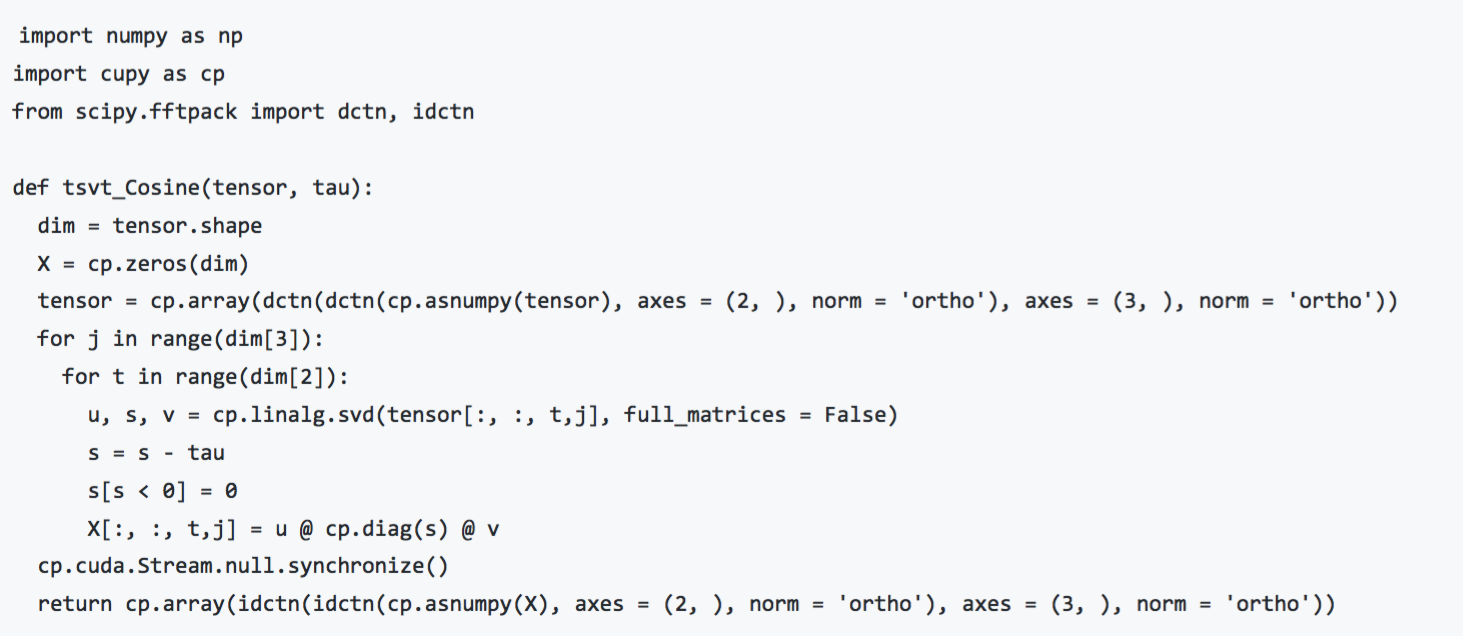}\\
		 \end{itemize}
	For the case of of tsvt-cosine by CPU we will use the same steps by changing  fftn and ifftn by dctn and idctn,  respectively
	This code provides a good example on how we can introduce Cupy to an already existing peace of code by identifying  the heavy work functions and see if they have a Cupy equivalent. The most time consuming functions are svd, fftn and ifftn. We can see in the code below that    cupy provides a GPU-accelerated implementation of those functions .
%	\newpage
%	We can see that from the above peace of code that we can use the same code just change from were we are importing the functions and this is because of the similar syntax that Cupy uses that make is easier to port an already existing code.\\
For  the algorithm that uses the DCT function instead of the FFT, the problem was more difficult since Cupy don't support the parallelizable version.\\
%\end{itemize}
	\subsubsection{Numerical experiments of the problem of completion}
	In Table \ref{tab 9} we give  the size of all the data test used in our experiments (color videos: fourth-order tensors).
	\begin{figure}[h!]
		\centering
		\begin{tabular}{l|l}
			\hline Name & size \\
			\hline xylophone &   $240\times320\times3\times30$   \\
			\hline car &  $1920\times1080\times3\times30$  \\
			\hline Mgrass   & $2160\times 4096\times 3\times30$  \\
			\hline notes &      $3840\times 2160\times 3\times30$ \\
			\hline
		\end{tabular}
		\captionof{table}{The size of all the data used in the experiments.}
		\label{tab 9}
	\end{figure}
\\
%	\begin{subsection}{GPU computing for completion problems}
		In this part we give the results of our codes of completion (TNN-PGA-F and TNN-PGA-C) by using CPU and GPU computation. In the next we denote by  PGA-F and PGA-C the codes using CPU  and by PGA-F-GPU and PGA-C-GPU those using GPU.\\ 
%		And we will see that using GPU computation does not change the evolution of the error or the RSE values but changes the execution time.\\
		% In this part we will denote TNN-PGA-F using CPU and GPU by PGA-F-CPU and PGA-C-GPU, respectively.\\
		In Figures \ref{fig 7.1} and \ref{fig 7.2}, we compare the evolution of the RSE and the error during the execution of the codes by CPU and GPU for two different videos 'mglass' and 'notes' with  two values of $sr$, $sr=0.05$ and $sr=0.1$. In Figure \ref{fig 7.4} we give  an  histogramme representing the required time of  PGA-F-CPU, PGA-F-GPU, PGA-C-CPU and PGA-C-GPU for $sr=0.1$.
		\begin{figure}[h!]
			\centering
			\begin{tabular}{cc}
				\includegraphics[width=0.47\linewidth]{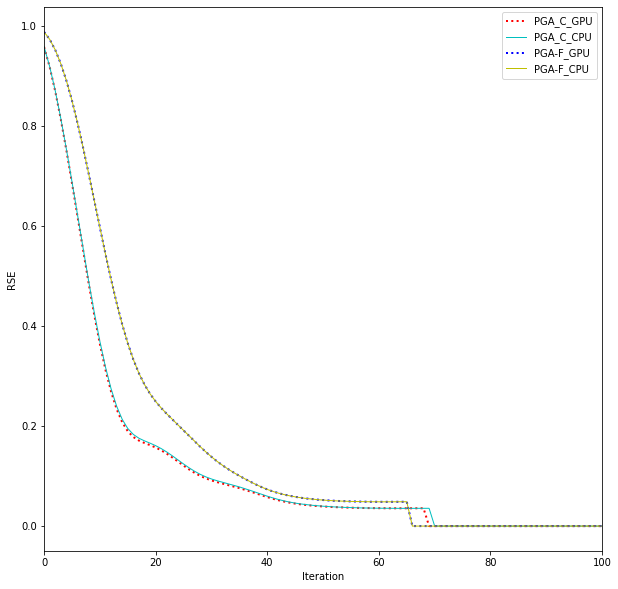}&
				\includegraphics[width=0.47\linewidth]{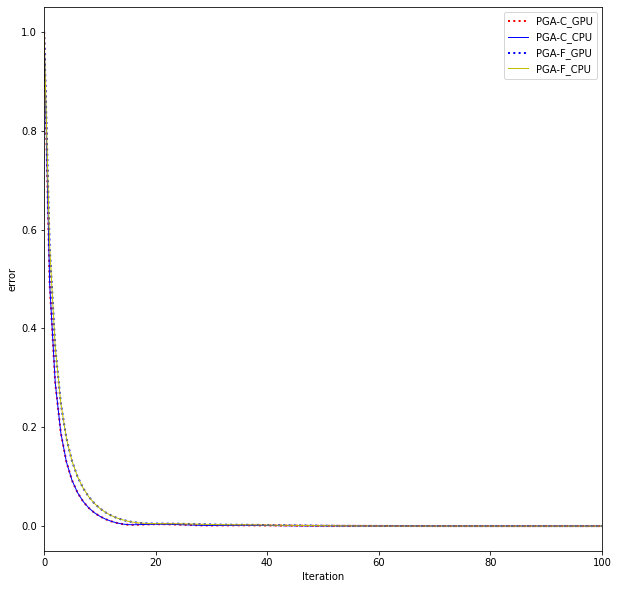}
			\end{tabular}
		\captionof{figure}{The evolution of RSE and the error on each iteration for mglass starting with $5\%$ of the original data} \label{fig 7.1}
		\end{figure}
		\begin{figure}[h!]
			\centering
			\begin{tabular}{cc}
				\includegraphics[width=0.47\linewidth]{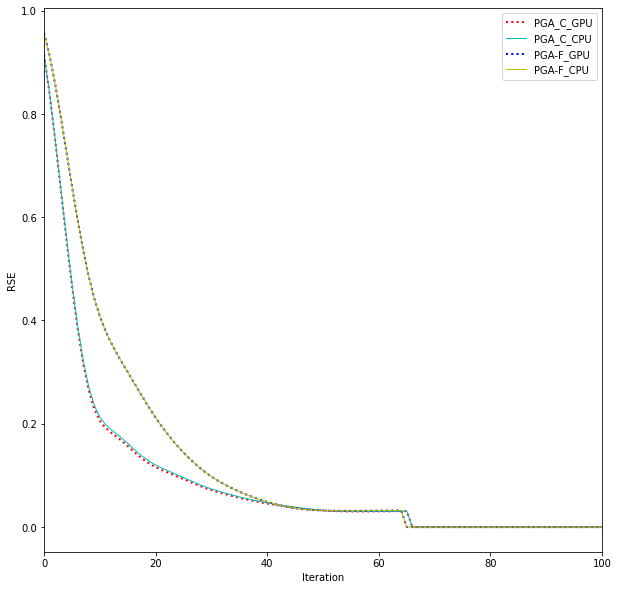}&
				\includegraphics[width=0.47\linewidth]{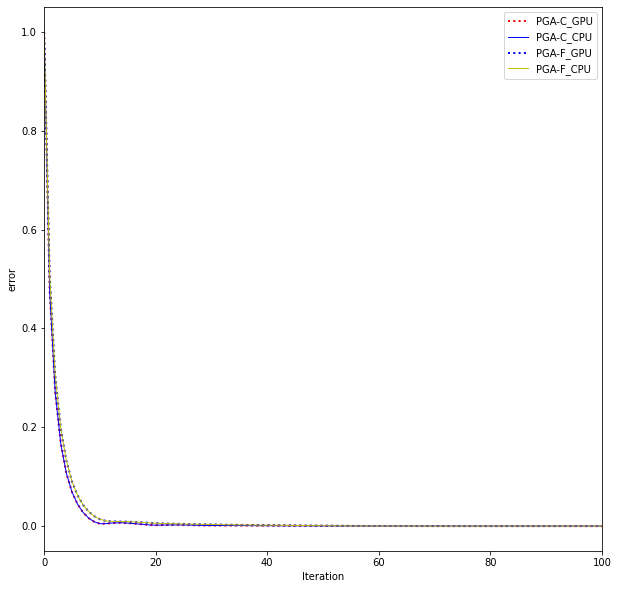}
			\end{tabular}
			\captionof{figure}{The evolution of RSE and the error on each iteration for notes starting with $10\%$ of the original data} \label{fig 7.2}
		\end{figure}
	
\noindent Figures \ref{fig 7.1} and \ref{fig 7.2} show that the RSE and the  error does not change when using the Cupy function as they are almost identical to the Numpy and Scipy ones.
\begin{figure}[h!]
	\centering
	\begin{tabular}{c}
		\includegraphics[width=0.9\linewidth]{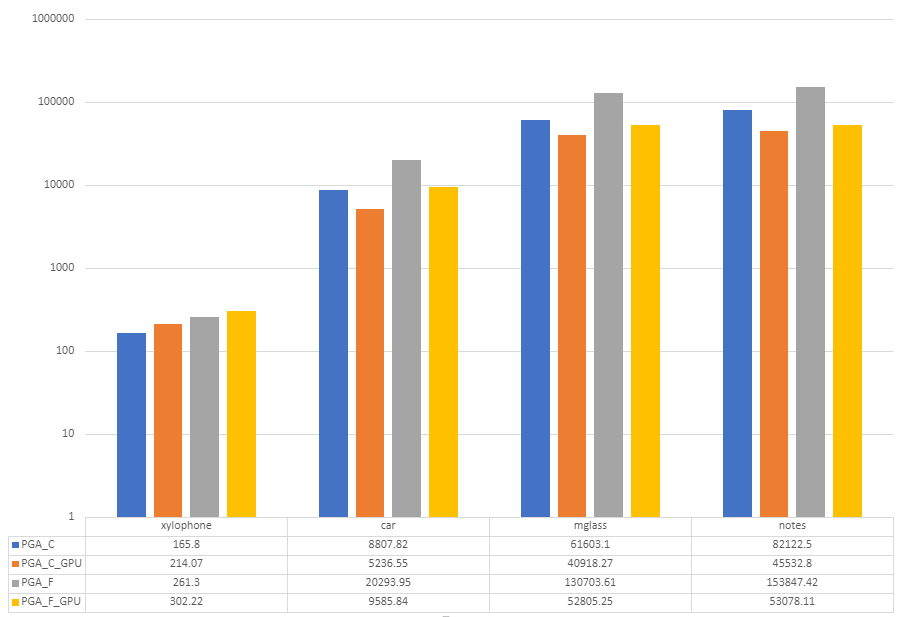}
	\end{tabular}
	\captionof{figure}{time comparison for data starting with $10\%$ of the original data.}\label{fig 7.4}
\end{figure}
\\
In Table \ref{tab 7.2} we   show the speed-up between the sequential and parallel computations. We reported the average $\tau$ defined by   using the following formula
\begin{equation}\label{tau}
	\tau= \dfrac{cpu\_time}{gpu\_time}.
\end{equation}
\begin{figure}[h!]
	\centering
	\begin{tabular}{l|l|l|l|l}
		\hline & xylohone & car & mglass & notes \\
		\hline PGA-C & $0.77$ & $1.68$ & $1.5$ & $1.80$\\
		\hline PGA-F & $0.86$ & $2.11$ & $2.47$ & $2.89$\\
		\hline
	\end{tabular}
\captionof{table}{Speed up percentage for the data set that started from $10\%$ from the original data.}\label{tab 7.2}
\end{figure}
As shown in Table \ref{tab 7.2}, the obtained speed-up values show how much we can boost the performance of our  codes by using the GPU. When the data set is small, as in the case for xylophone, there is no need to use GPU becuase in that case the returned cpu-time is smaller than the one optained by GPU. This  performance can be explained  by the fact that for small problems,  the transfer of the data in parallel computation, requires a  significant time compared the classical computation for which no need of transfert data is needed. 
When the data becomes larger and the targeted accelerated function is taking a significant time from the total runtime we can see a big speed up using the GPU accelerated functions as kernels.\\
\end{subsection}
\newpage
\section{Conclusion}
In this paper we presented a new tensor-tensor product for high orders. Using this product, we defined a new high -order SVD and some related properties. We gave some theoretical results for the tensor product. We used  this   tensor product for  tensor completion using the proximal gradient algorithm.  In the numerical section, we showed some test on color-videos and   used GPU computation to get fast  computation . The presented numerical experiments show the efficiency of our proposed algorithms.
%We got that our algorithms of completion and TRPCA and efficient and faster than the algorithm compared with, with an advantage in the time when we used the cosine transform which is due to the fact that we still just in the real domain, but when we use Fourier transform we get complex matrices which slowed down the algorithm.
% But in general we got almost the same results in the efficiency. Than it can be new idea to look for in our next work, by looking where we can use each type of transformation.
	%\section*{References}
	%\bibliographystyle{siamplain}
	%\bibliography{ref}

\end{document}